\newtheorem{assumption}{Assumption}
\newcommand{\myspace}[1]{\par\vspace{#1\baselineskip}}
\newtheorem{corollary}{Corollary}
\newtheorem{proposition}{Proposition}[section]
\newtheorem{definition}{Definition}[section]
\newtheorem{Property}{Property}[section]
\newtheorem{theorem}{Theorem}[subsection]
\newtheorem{lemma}{Lemma}[subsection]
\theoremstyle{remark}
\newtheorem{remark}{Remark}
\colorlet{Changes@Color}{red}
\begin{document}
%\title{An Accelerated Adaptive Block Proximal Linear Framework for Nonconvex and Nonsmooth Optimization}
%\title{ABPL+: An Accelerated Block Proximal Linear Framework with Adaptive Momentum for Nonconvex Nonsmooth Optimization}
\title{An Accelerated Block Proximal Framework with Adaptive Momentum for Nonconvex and Nonsmooth Optimization}

\author{Weifeng Yang\orcidlink{0009-0004-9453-3525} and Wenwen Min$^*$\orcidlink{0000-0002-2558-2911} 
\IEEEcompsocitemizethanks{
\IEEEcompsocthanksitem 
Weifeng Yang and Wenwen Min contributed equally to this work, and they are with the School of Information Science and Engineering, Yunnan University, Kunming 650091, Yunnan, China. 
E-mail: minwenwen@ynu.edu.cn. 
}
\thanks{Manuscript received XX, 2023; revised XX, 2023. \newline
        (Corresponding authors: Wenwen Min)}}

\markboth{IEEE TRANSACTIONS ON KNOWLEDGE AND DATA ENGINEERING, 2023}{Yang \MakeLowercase{\textit{et al.}}:  Accelerated Adaptive Block Proximal Linear Method}

\IEEEtitleabstractindextext{
\begin{abstract}
We propose an accelerated block proximal linear framework with adaptive momentum (ABPL$^+$) for nonconvex and nonsmooth optimization. 
We analyze the potential causes of the extrapolation step failing in some algorithms, and resolve this issue by enhancing the comparison process that evaluates the trade-off between the proximal gradient step and the linear extrapolation step in our algorithm. Furthermore, we extends our algorithm to any scenario involving updating block variables with positive integers, allowing each cycle to randomly shuffle the update order of the variable blocks. Additionally, under mild assumptions, we prove that ABPL$^+$ can monotonically decrease the function value without strictly restricting the extrapolation parameters and step size, demonstrates the viability and effectiveness of updating these blocks in a random order, and we also more obviously and intuitively demonstrate that the derivative set of the sequence generated by our algorithm is a critical point set. Moreover, we demonstrate the global convergence as well as the linear and sublinear convergence rates of our algorithm by utilizing the Kurdyka–Łojasiewicz (KŁ) condition. To enhance the effectiveness and flexibility of our algorithm, we also expand the study to the imprecise version of our algorithm and construct an adaptive extrapolation parameter strategy, which improving its overall performance. We apply our algorithm to multiple non-negative matrix factorization with the $\ell_0$ norm, nonnegative tensor decomposition with the $\ell_0$ norm, and perform extensive numerical experiments to validate its effectiveness and efficiency. 
\end{abstract}

\begin{IEEEkeywords}
Nonconvex-nonsmooth optimization, Alternating minimization, Accelerated method, Kurdyka–Łojasiewicz (KŁ) Property, Proximal Alternating Linearized Minimization (PALM), Sparse nonnegative matrix factorization, Sparse nonnegative tensor decomposition
\end{IEEEkeywords}
}

\maketitle\IEEEpeerreviewmaketitle

\section{Introduction}
\IEEEPARstart{I}n many machine learning applications, a broad class of nonconvex and nonsmooth problems can be presented as: 
\begin{small}
\begin{align}\label{e11}
    \min\limits_{(\left\{{x_{i}} \right\}^{n}_{i=1}\ \in \prod_{i}^n \mathbb{R}^{d_{i}})}J(\left\{{x_{i}} \right\}^{n}_{i=1})=H(\left\{{x_{i}} \right\}^{n}_{i=1})+\sum_{i=1}^{n}F_{i}(x_{i}),
\end{align}
\end{small}
where $d_{i} \in \mathbb{N}, ~F_{i}: \mathbb{R}^{d_{i}} \rightarrow \mathbb{R}, ~H: \prod_{i}^n \mathbb{R}^{d_{i}} \rightarrow\mathbb{R}$. $F_{i}$ is a proper, lower semicontinuous (possibly nonconvex and nonsmooth) function, $H$ are continuously differentiable (possibly nonconvex) functions (see Assumption \ref{assump1} in the following section for a clearer definition). Obviously, it can be expressed as the following $n + 1$ compound function problems which have been seen in a wide array of problems, including:  Matrix Factorization  \cite{lee1999learning,bolte2014proximal,min2021Group}, Logistic Regression  \cite{wang2014clinical,min2018network}, Tensor Decomposition  \cite{cong2015tensor,min2021tscca}, Image Classification  \cite{zhang2018classify}, Cardinality Constrained Portfolio \cite{shi2022cardinality}, Generalized Iterated Shrinkage Thresholdings \cite{zhang2020global}, etc. 

The problem of Eq. (\ref{e11}) usually does not use the method of joint optimization, but instead it is usually the method of alternate optimization, that is block coordinate descent (BCD) method: 
\begin{align}
    x^{k+1}_{j} \in \min\limits_{x_{i} \in R^{d_{i}}} J(\left\{{x^{k+1}_{i}}\right\}^{j-1}_{i=1}, x_{j}, \left\{{x^{k}_{i}} \right\}^{n}_{i=j+1}). 
\label{e12}
\end{align}

But BCD methods has a defect that it need to be able to obtain an analytical solution \cite{boyd2004convex} before proceeding to the next step, the cost is extremely high for solving problem. such as sparse non-negative matrix factorization with $\ell_0$-norm  \cite{bolte2014proximal}, which is an NP-hard and non-convex problem.

To solve this problem, an efficient method based on the proximal operator is proposed, which works by linearizing the smooth term in the subproblem, it is called the proximal alternate linearization minimization (PALM) algorithm  \cite{bolte2014proximal}, when $H$ is continuously differentiable: 
\begin{align}
    && x^{k+1}_{j} &\in \min\limits_{x_{j} \in R^{d_{i}}} \Big[F_{j}(x)+\langle  x_{j}, \nabla_{x_{j}} H(\left\{{x_{i}} \right\}^{n}_{i=1}) \rangle \notag \\
    && &+\frac{1}{2\sigma_{j}} ||x_{j}-x^{k}_{j}||^{2} \Big],
\label{e14}
\end{align}
where $\sigma_{i}$ is local Lipschitz constant of $\nabla_{x_{j}} H(\left\{{x_{i}} \right\}^{n}_{i=1})$. In this way, the solution of each sub-problem can be solved more easily. And the PALM algorithm has been applied to many problems, such as Dictionary Learning for Sparse Coding \cite{bao2015dictionary}, Sparse Matrix Factorization \cite{min2022structured}. 

The PALM algorithm framework needs to satisfy the following three main properties  \cite{bolte2014proximal}: 
\begin{Property}   
(\romannumeral1) Sufficient decrease property, i.e., there exists a positive constant $\rho$ such that $\rho||x^{k+1}-y^{k}||\leq \phi(x^{k})-\phi(x^{k+1})$.

(\romannumeral2) A subgradient lower bound for the iterates gap: i.e., 
there exists a positive constant $\rho_{2}$ such that $\rho_{2}||x^{k+1}-y^{k}||\geq ||g^{k}||$ where $g^{k} \in \partial \phi(x^{k})$.

(\romannumeral3) Using the Kurdyka–Łojasiewicz (KŁ) Property (see more precise definitions in the next section and \cite{bolte2014proximal}):  Assume that $\phi$ is a KŁ function and show that the generated sequence $\left\{x^{k}\right\}(k\in \mathbb{N})$ is a Cauchy sequence. 
\label{based}
\end{Property}  
It is worth noting that the term $\phi$ used above does not always correspond to the objective function. In fact, $\phi$ can be an auxiliary function of $J(x^{k})$ and $||x^{k}-x^{k-1}||$, such as iPALM \cite{pock2016inertial}. 

% It is obvious that in order to employ any approach associated with proximal gradient methods  \cite{attouch2010proximal}, the first two requirements must be satisfied. It is possible to examine whether the third requirement is true under the presumption that the objective function satisfies the KŁ Property, along with the first two main properties. 
Notably, it has been demonstrated that the proximal gradient algorithm and proximal alternating linear minimization algorithm satisfies Property \ref{based} without acceleration \cite{attouch2013convergence}. 

Therefore, Accelerated Proximal Alternative Linear Minimization (APALM) algorithms, which draw inspiration from the accelerated proximal gradient approach, develop naturally. In order to ensure that the accelerated alternate linear minimization algorithm satisfies Property \ref{based}, a straightforward approach is to restrict the range of extrapolation parameters and step size. Notable examples of the APALM algorithms that satisfy Property \ref{based} by limiting the selection range of extrapolation parameters and step size include iPALM  \cite{pock2016inertial} and BPL  \cite{xu2017globally}. The former advocates the use of auxiliary functions, such as the construction of $\phi(x^{k}, x^{k-1})=J(x^{k})+\frac{\delta}{2}||x^{k} - x^{k-1}||$ to disguised satisfaction of the monotonically decreasing property in Property \ref{based}, however, this comes at a cost because the objective function will not monotonically decrease even if the auxiliary function satisfies the Property \ref{based}, which greatly reduces the algorithm's numerical performance, moreover, since the selection range of the extrapolation parameters and step size must be rigorously constrained because they are not independent of one another, the extrapolation effect will be further diminished. And a number of better methods for the iPALM algorithm have been put forth. For instance, GiPALM  \cite{gao2020gauss} meets Property \ref{based} by altering the iPALM algorithm's extrapolation step order, whereas NiPALM  \cite{wang2023generalized} satisfies Property \ref{based} by lengthening the auxiliary function sequence. IBPG and IBP \cite{le2020inertial} utilize complicated and stringent selection criteria and stronger assumptions in order to expand the extrapolation parameters' selection range and to meet Property \ref{based}, for example, IBPG and IBP call for a strong convexity in the coupling function, which will render some special problems, like Logistic Regression \cite{wang2014clinical}, invalid. 

The latter approach advocates a fundamentalist perspective, that is, explicitly limiting the selection range of extrapolated parameters and step size, and employing the backtracking method to determine if extrapolated values are feasible, so as to ensure the goal function monotonically decreases, this type of algorithm is represented by BPL  \cite{xu2017globally}. These algorithms concentrate on carefully choosing extrapolation parameters and step size to preserve Property \ref{based}. But the backtracking method's numerical effectiveness is not very great, and occasionally it is far less effective than simply setting the restart step. Furthermore, the method's extrapolation step size and the parameter selection range are not independent of one another. Examples of such algorithms include: a specific algorithm IR-t-TNN \cite{wang2021generalized} for tensor recovery problems, etc. 

These kind of algorithm that limits the range of extrapolation parameters and step size will obviously reduce the acceleration effect, and this kind of algorithm also has the defect that it cannot adaptively modify the extrapolation parameters, which further reduces the acceleration effect. As the above saying describes, Property \ref{based} is completely satisfied by the non-accelerated proximal gradient technique. As a result, an algorithm can also satisfy Property \ref{based} by inserting a restart step that contrasts the linear extrapolation step with the proximal gradient step. APGnc \cite{yao2016more} is an example of this kind of algorithm. But this kind of approach has intrinsic limitations that the extrapolation step fails in some special problem (In Section \ref{section: analysis} and Appendix \ref{Appdix2}, we will provide a thorough analysis of it), despite the fact that it can be adaptively altered and does not impose rigid limits on the extrapolation parameters and step size. It is also important to note that most APGnc algorithms are derived from accelerated proximal gradient algorithms(APG) \cite{beck2009fast} rather than proximal alternating linear minimization algorithms (PALM), such as mAPG \cite{li2015accelerated} and APGnc$^{+}$ \cite{li2017convergence}, so these algorithms are typically restricted to situations that univariate optimization problems and rarely determine the algorithm's global convergence using the KŁ property \cite{li2015accelerated,yao2016more,li2017convergence}. Thus, to solve these problems, a new extended algorithm framework and a new analysis method are needed. 

\subsection{Contributions}
(1) We propose an accelerated proximal alternating linear minimization method with adaptive momentum. Our algorithm not only has the properties of a monotonically decreasing objective function and adaptive momentum step size, but also effectively addresses the issue of strict constraints on extrapolation parameters and step size in general accelerated proximal alternating minimization algorithms such as BPL \cite{xu2017globally} and iPALM \cite{pock2016inertial}, and solves the inherent flaw observed that the extrapolated step fails and the inapplicability of multivariate optimization problems in the APGnc \cite{li2017convergence} algorithm framework, while showing the effectiveness of our algorithm under random sorting order. 

(2) We show the reasons for the failure of some algorithm framework extrapolation steps and solve it with our algorithm. Under milder assumption, we not only ensures the monotonically decreasing objective function of our algorithm, but also demonstrates the convergence and efficiency of our algorithm's update order randomization technique. Additionally, we more intuitively and succinctly demonstrates that the sequence produced by the algorithm converges to a critical point. 
% and acquires some by-product properties.
For objective functions satisfying the Kurdyka–Łojasiewicz Property, we also obtain the global convergence and convergence rate for our algorithm.

(3) In Section \ref{section: B}, our algorithms have been applied to the sparse non-negative tensor decomposition with $\ell_0$-norm constraints and multiple sparse non-negative matrix factorization with $\ell_0$-norm constraints. Numerical experiments validate the effectiveness of our proposed algorithm. 

\section{Symbol definitions and preliminaries}
\label{section: pre}
In this section, we will introduce some symbol definitions and the preliminary knowledge required in the derivation process. Table \ref{notation} summarizes the notation used in the paper. 

\begin{table}[!ht] 
\renewcommand\arraystretch{1.5}
\centering % 表格整体居中
\caption{Notation}
\begin{tabular}{|p{3cm}|p{5cm}|} \hline% 其中, |c|表示文本居中, 文本两边有竖直表线。
Notation & Definition \\ \hline
$\left\{{x_{i}} \right\}^{n}_{i=1}$ & $(x_{1}, x_{2}. . . . . . , x_{n})$ \\
$\left[ n \right]$  & $\left\{i\right\}_{i=1}^{n}$ \\
$\left\{{x_{i}} \right\}_{i\in T_{s}}$ or $(x_{T_{s}})$  & $(x_{T_{s_{1}}}, x_{T_{s_{2}}}. . . . . . , x_{T_{s_{card(T_{s})}}})$ where $T_{s}$ is a random sort set of $\left[ n \right]$. \\
$card(S)$ & The number of elements in the $S$ set. \\

$x_{(n)}$  & $\left\{{x_{i}} \right\}^{n}_{i=1}$ \\ 
$x_{(n)}-y_{(n)}$ &	$\left\{{x_{i}}-{y_{i}}\right\}^{n}_{i=1}$ \\ 
$L_{j}$ and $L_{\nabla H_{x_{j}}}$ & the lipchitz constant denoting the variable of the $j$-th block of the function $H(x_{(n)})$ \\ 
$x^{k}_{i}$  & the $i$-th block of $x$ within the $k$-th outer loop\\ 
$\mathcal{C}^{p}_{L}(X)$ & a collection of functions satisfying the lipschitz property \\

$\langle \centerdot, \centerdot \rangle$ & inner product \\
$||\centerdot||_{p}$ & $l_{p}$ norm \\
$||\left\{{x_{i}}\right\}^{n}_{i=1}||$ & $ \sum_{i=1}^{n}||(x_{i})|| \ ({\forall} x_{i} \in R^{n})$\\ \hline
% $[[ \ ]]$ & Kruskal operator \\ \hline
\end{tabular}
\label{notation}
\end{table}

\subsection{Symbol definition in point set topology, measure theory and set theory}
In this paper, all topological spaces are usual metric topology. Thus, there are some definitions and properties \cite{ciarlet2013linear,armstrong2013basic,stein2009real,foreman2009handbook,halmos2013measure}. 
\begin{definition}
Set $(X,\tau) $ is a topological space, 
% $A \subseteq X$ is a open set if and only if $~{\forall} x \in A, ~{\exists} U \in \tau, ~ x \in U \subseteq A$,
$A \subseteq X$ is a closed set if and only if the complement of A is open or A is equal to its closure $\bar{A}$. 
\label{dopen}
\end{definition}
\begin{definition}
If A is a set, then defined the $A'$ is set of overall cluster points of A, $A^{o}$ is set of overall inter points of A. 
% $A^{c}$ is complementary set of A. 
\label{d1}
\end{definition}
\begin{proposition}
Set $(X,\tau) $ is a topological metric space, then ${\forall}A \subseteq X$, $A'$ is closed. 
\label{dtop}
\end{proposition}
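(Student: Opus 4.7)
The approach is to verify that $A'$ is closed by showing its complement $X \setminus A'$ is open, invoking the first criterion in Definition \ref{dopen}. First I would unwind the definition of a cluster point: for any $x \in X \setminus A'$, the negation gives an open ball $B(x,\varepsilon)$ whose intersection with $A$ is contained in $\{x\}$. The plan is then to prove that this entire ball $B(x,\varepsilon)$ lies inside $X \setminus A'$, which immediately yields the openness of $X \setminus A'$ since every such $x$ has an open neighborhood contained in $X \setminus A'$.

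For the key step, fix an arbitrary $y \in B(x,\varepsilon)$ and produce an open neighborhood of $y$ that certifies $y \notin A'$. There are two cases. If $y = x$, the ball $B(x,\varepsilon)$ itself already witnesses this. If $y \neq x$, I would pick a radius $\delta > 0$ with $\delta \leq \min\{d(x,y),\, \varepsilon - d(x,y)\}$; by the triangle inequality this gives $B(y,\delta) \subseteq B(x,\varepsilon)$, and the choice $\delta \leq d(x,y)$ ensures $x \notin B(y,\delta)$. Combining, $B(y,\delta) \cap A \subseteq (B(x,\varepsilon) \cap A) \setminus \{x\} = \emptyset$, so $y$ has a neighborhood disjoint from $A$ and is therefore not a cluster point.

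The only mildly subtle step — and essentially the main obstacle — is the construction of the radius $\delta$ in the second case: one must simultaneously keep $B(y,\delta)$ inside $B(x,\varepsilon)$ (to inherit the original almost-disjointness with $A$) and keep $x$ outside $B(y,\delta)$ (to eliminate the one possible exceptional point of intersection). This is precisely the step that uses the metric structure; in a general topological space one would need to replace it with a $T_1$ or Hausdorff separation argument. Beyond the triangle inequality the proof requires no further machinery, and the alternative closure-based characterization of closedness in Definition \ref{dopen} is not needed.
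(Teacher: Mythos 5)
Your proof is correct. The paper does not actually prove Proposition \ref{dtop}; it is stated as a standard fact with a citation to the topology/analysis references, so there is no in-paper argument to compare against. Your argument is the standard one: for $x\notin A'$ take a ball $B(x,\varepsilon)$ with $B(x,\varepsilon)\cap A\subseteq\{x\}$, and for $y\in B(x,\varepsilon)$ with $y\neq x$ choose $0<\delta\leq\min\{d(x,y),\,\varepsilon-d(x,y)\}$ so that $B(y,\delta)\subseteq B(x,\varepsilon)$ while excluding $x$, giving $B(y,\delta)\cap A=\emptyset$. The radius choice is well-defined because $0<d(x,y)<\varepsilon$, and you correctly identify that this is exactly where the metric (more generally, $T_1$) structure is needed, since the derived set of a subset of a general topological space need not be closed. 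No gaps.
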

\begin{proposition}
A bounded closed set in a finite-dimensional space is a compact set. 
\label{dcompact}
\label{p4}
\end{proposition}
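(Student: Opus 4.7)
The plan is to prove the Heine--Borel theorem via sequential compactness, which is equivalent to topological compactness in the metric setting assumed throughout the paper. Concretely, given a bounded closed set $A$ in a finite-dimensional normed space $X$, I will show that every sequence in $A$ admits a subsequence converging to a point of $A$, and then invoke the metric-space equivalence ``sequentially compact $\Longleftrightarrow$ compact'' to conclude.

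First I would reduce to $\mathbb{R}^n$. Since $X$ is finite-dimensional, fix a basis and use the fact that all norms on a finite-dimensional vector space are equivalent; this lets me transfer boundedness and closedness to the Euclidean space $\mathbb{R}^n$ without loss of generality. Because $A$ is bounded, there exists $M>0$ such that $A \subseteq [-M,M]^n$.

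Next I would establish the Bolzano--Weierstrass property on $[-M,M]^n$. For the one-dimensional case $n=1$, I use the classical bisection argument: repeatedly halve $[-M,M]$, always selecting a half containing infinitely many terms of the sequence $\{x^k\}$, and extract a subsequence $\{x^{k_j}\}$ with one term in each nested interval. The nested intervals have length tending to zero, so by completeness of $\mathbb{R}$ the subsequence is Cauchy and converges. For general $n$, I proceed by induction on the coordinate index: given $\{x^k\} \subseteq [-M,M]^n$, extract a subsequence whose first coordinate converges, then a further subsequence whose second coordinate converges, and so on; after $n$ extractions every coordinate converges, so the diagonal-style subsequence converges in $\mathbb{R}^n$ with the Euclidean norm.

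Finally I would use closedness of $A$: the limit point of any convergent subsequence drawn from $A$ is a cluster point of $A$, hence lies in $\overline{A} = A$ by Definition~\ref{dopen}. Thus $A$ is sequentially compact, and in a metric topology this is equivalent to compactness, finishing the proof. The main obstacle is really just the bisection step in dimension one and the bookkeeping of successive subsequence extractions; everything else is a direct appeal to equivalence of norms, the definition of closedness, and the metric-space identification of compactness with sequential compactness.
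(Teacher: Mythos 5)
Your argument is a correct and complete proof of the Heine--Borel theorem along the standard route: reduction to $\mathbb{R}^n$ by equivalence of norms, Bolzano--Weierstrass via bisection and coordinatewise subsequence extraction, closedness to place the limit in $A$, and the metric-space equivalence of sequential and topological compactness. The paper itself offers no proof of Proposition~\ref{p4} --- it is stated as a standard preliminary fact with citations to textbooks --- so there is no internal argument to compare against; your write-up would serve as a valid self-contained justification of the cited fact.
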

\begin{proposition}
The closure of a bounded set is still bounded.
\label{dbouned}
\end{proposition}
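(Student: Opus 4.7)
The plan is to reduce boundedness to containment in a closed ball and exploit the fact that closed balls in a metric topology are closed sets. Specifically, if $A \subseteq X$ is bounded, there exist $x_{0}\in X$ and $M>0$ with $A \subseteq \bar{B}(x_{0},M):=\{x\in X:d(x,x_{0})\le M\}$. Since the closure operator is monotone with respect to set inclusion and fixes every closed set, once I know $\bar{B}(x_{0},M)$ is closed I immediately obtain $\bar{A}\subseteq \overline{\bar{B}(x_{0},M)}=\bar{B}(x_{0},M)$, and hence $\bar{A}$ is bounded by the same radius $M$ about the same center $x_{0}$.

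The only auxiliary step requiring verification is that $\bar{B}(x_{0},M)$ is closed, which I would establish by showing its complement is open (Definition~\ref{dopen}). Given any $y$ with $d(y,x_{0})>M$, set $\epsilon:=\tfrac{1}{2}(d(y,x_{0})-M)>0$; for every $z\in B(y,\epsilon)$ the triangle inequality gives $d(z,x_{0})\ge d(y,x_{0})-d(z,y)>d(y,x_{0})-\epsilon>M$, so $B(y,\epsilon)$ is disjoint from $\bar{B}(x_{0},M)$, whence the complement is open. An alternative route, perhaps more transparent, uses the diameter: for any $u,v\in\bar{A}$ choose sequences $u_{n},v_{n}\in A$ with $u_{n}\to u$, $v_{n}\to v$; continuity of the metric $d:X\times X\to\mathbb{R}$ yields $d(u,v)=\lim_{n}d(u_{n},v_{n})\le \sup_{a,b\in A}d(a,b)<\infty$, which shows $\mathrm{diam}(\bar{A})\le\mathrm{diam}(A)<\infty$ and therefore $\bar{A}$ is bounded.

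There is no genuine obstacle here; this is a foundational metric-space fact and either route above closes the argument in a few lines. The only subtlety worth flagging is that the proof tacitly relies on the standing hypothesis at the opening of this subsection---namely, that every topology considered is the usual metric topology---since ``bounded'' is defined via the metric rather than the topology, and without a metric the statement would not even be meaningful. I would therefore begin the write-up by invoking this hypothesis explicitly before unpacking the definition of boundedness.
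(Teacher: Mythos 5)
Your proof is correct. The paper itself supplies no argument for this proposition --- it is listed among the standard point-set facts quoted from the cited textbooks --- so there is no in-paper proof to compare against; your write-up is simply the standard textbook argument made explicit. Either of your two routes closes the claim: containment $A \subseteq \bar{B}(x_{0},M)$ together with the closedness of the closed ball and monotonicity of the closure operator, or the diameter bound $\mathrm{diam}(\bar{A}) \le \mathrm{diam}(A)$ via continuity of the metric. Your flag about the standing hypothesis is also apt: the paper does declare at the start of that subsection that all topological spaces are the usual metric topology, and boundedness only makes sense relative to that metric, so invoking it explicitly is the right move.
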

\begin{definition}
Let M and N be any two sets. If there is a bijection $\Phi$: $M\rightarrow N$, then $|M|=|N|$ or $card(M)=card(N)$, and if there is an injective $\varphi$: $M\rightarrow N$, then $|M|\leq|N|$.
\label{dcard}
\end{definition}
\begin{proposition}
Zorn's lemma: let $X$ be a non-empty partially ordered set. If every totally ordered subset of $X$ has an upper bound in $X$, then $X$ has at least one maximal element.
\label{zorn}
\end{proposition}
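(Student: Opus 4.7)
The plan is to establish Zorn's lemma by contradiction within the standard ZFC framework, invoking the Axiom of Choice. Assume for contradiction that $X$ has no maximal element. Then for every $x \in X$ the set $\{y \in X : y > x\}$ is nonempty, and by hypothesis every chain (totally ordered subset) $C \subseteq X$ admits an upper bound $u_C$ in $X$, which is itself not maximal, so $\{y \in X : y > u_C\}$ is also nonempty. Applying the Axiom of Choice to this family of nonempty sets indexed by chains, I would extract a selector $f$ sending each chain $C$ to an element $f(C) \in X$ strictly greater than every member of $C$.

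Next I would construct a transfinite sequence $\{x_\alpha\}$ in $X$, indexed by the ordinals, by recursion: set $x_0 = f(\emptyset)$ and, for every ordinal $\alpha > 0$, put $x_\alpha = f(\{x_\beta : \beta < \alpha\})$. To make this recursion legitimate, I would verify by transfinite induction on $\alpha$ that the initial segment $\{x_\beta : \beta < \alpha\}$ is a chain in $X$ at every stage, so that $f$ can be applied to it. The inductive step uses the defining property $f(C) > c$ for all $c \in C$: this forces $x_\alpha > x_\beta$ whenever $\beta < \alpha$, simultaneously establishing the chain condition at stage $\alpha$ and showing that the assignment $\alpha \mapsto x_\alpha$ is strictly order-preserving, hence injective.

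The contradiction then comes from a Burali-Forti style observation: the map $\alpha \mapsto x_\alpha$ would provide an injection from the proper class of all ordinals into the set $X$, which the Replacement axiom forbids. Equivalently, restricting attention to ordinals strictly below the Hartogs number of $X$ already yields more than $|X|$ distinct elements of $X$, an immediate contradiction. Hence the supposition that $X$ lacks a maximal element is untenable, and $X$ must contain at least one maximal element.

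The principal obstacle in carrying this out cleanly is the interlocked transfinite recursion-and-induction needed to define $\{x_\alpha\}$: one must show simultaneously that the partial sequence is a chain at each ordinal stage and that $f$ can legitimately be applied at the next stage, with genuine care required at limit ordinals where the union of earlier chains must be shown to remain a chain. A secondary but essential subtlety is the proper-class versus set distinction invoked at the final step, which is standard but worth making explicit since it is precisely the mechanism that converts the infinite production of strictly increasing elements into an outright contradiction.
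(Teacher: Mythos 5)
The paper does not prove this statement at all: Zorn's lemma appears in Section~\ref{section: pre} as a background Proposition imported from the cited set-theory references, on the same footing as the Axiom of Choice to which it is equivalent, and it is used later only to extract a maximal/minimal element from a nonempty ordered set of reals. So there is no in-paper argument to compare yours against. Taken on its own terms, your sketch is the standard and correct proof: the choice function $f$ on chains exists because for each chain $C$ the upper bound $u_C$ is by assumption non-maximal, and any $y > u_C$ satisfies $y > u_C \geq c$, hence $y > c$, for every $c \in C$ (with the empty chain handled by the nonemptiness of $X$); the transfinite recursion then produces a strictly increasing, hence injective, ordinal-indexed family in $X$, and the Hartogs-number (or Replacement/Burali--Forti) argument yields the contradiction. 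The two subtleties you flag --- that the initial segments remain chains at limit stages, and the proper-class-versus-set distinction at the end --- are exactly the right ones, and restricting the recursion to ordinals below the Hartogs number of $X$ from the outset is the cleanest way to dispose of both. Nothing is missing; the only remark is that for the purposes of this paper such a proof is unnecessary, since the result is assumed as part of the ambient set-theoretic framework.
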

\begin{proposition}
In usual metric topological space, the set of real numbers is a natural order.  
\label{natureorder}
\end{proposition}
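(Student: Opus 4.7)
The plan is to read ``natural order'' as the standard total (linear) order $\leq$ on $\mathbb{R}$, compatible with the usual metric topology, and simply verify the four axioms that make $(\mathbb{R}, \leq)$ a totally ordered set. This positioning right after Proposition~\ref{zorn} (Zorn's lemma) strongly suggests the author's purpose is to certify that $\mathbb{R}$ is a chain, so that later arguments can invoke Zorn on totally ordered bounded subsets of $\mathbb{R}$ (or of function values).

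First I would recall the construction of $\mathbb{R}$ as a complete ordered field (via Dedekind cuts or equivalence classes of Cauchy rational sequences), which endows $\mathbb{R}$ with both its order and its metric $d(x,y)=|x-y|$. From the order axioms of this field I would check, in order: reflexivity ($x\leq x$), antisymmetry ($x\leq y$ and $y\leq x$ imply $x=y$), transitivity (from $y-x\geq 0$ and $z-y\geq 0$ one has $z-x=(z-y)+(y-x)\geq 0$ by closure of the non-negative cone under addition), and trichotomy (for $x,y\in\mathbb{R}$ exactly one of $x<y$, $x=y$, $x>y$ holds, which is a direct axiom of ordered fields).

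Next I would show that the order is compatible with the usual metric topology, i.e.\ that the order topology generated by the open rays $(-\infty,a)$ and $(b,+\infty)$ coincides with the metric topology generated by the open balls $(x-\varepsilon,x+\varepsilon)=\{y:d(x,y)<\varepsilon\}$; this is the standard fact that every metric ball is an intersection of two open rays and conversely every open ray is a union of metric balls. Combined with the four axioms verified above, this yields the claim.

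The main obstacle here is not mathematical — every step is essentially built into the definition of $\mathbb{R}$ — but semantic: pinning down what the author means by ``natural order'' in this context. Since the proposition is stated immediately after Zorn's lemma and is presumably used later to extract maximal/minimal elements from bounded real chains (e.g.\ for compactness or liminf/limsup arguments in the KŁ analysis), interpreting it as ``$(\mathbb{R},\leq)$ is a totally ordered set whose order topology coincides with the usual metric topology'' should match the intended use and make the verification routine.
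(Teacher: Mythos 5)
Your verification is correct, but note that the paper itself offers no proof of Proposition~\ref{natureorder} at all: it is listed among the preliminary facts in Section~\ref{section: pre} and simply backed by citations to standard references, so there is no ``paper's proof'' to match against. Your reading of the statement — that $(\mathbb{R},\leq)$ is a totally ordered set compatible with the usual metric topology — is the right one, and your guess about its intended use is accurate: the proposition is invoked exactly once, together with Zorn's lemma (Proposition~\ref{zorn}), in the proof of Theorem~\ref{c1} to extract an extremal element $M$ of the real-valued set $T_k$ of objective-function gaps. For that application only the total-order axioms (trichotomy in particular, so that every subset of $T_k$ is a chain) are actually needed; the second half of your argument, showing that the order topology coincides with the metric topology, is correct but superfluous for anything the paper does with this proposition. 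So your proposal is, if anything, more complete than what the intended use requires, and the only substantive work in it is the semantic disambiguation you already identified.
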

\begin{definition}
For $S \subseteq \mathbb{R}^{n}$, we define the distance between the S and the point x as: $\operatorname{dist}(x, S): =\operatorname*{inf}\,\left\{\|x-y\|: \,y\in S\right\}. $
% \begin{center}
% $\operatorname{dist}(x, S): =\operatorname*{inf}\, \left\{\|x-y\|: \, y\in S\right\}. $
% \end{center}
\end{definition} 
\begin{definition}
Given a space X, a function $\mu$:  X $\rightarrow [0,\infty]$ is said to be a measure if it satisfies the following properties: \\
(\romannumeral1): For the empty set $\emptyset$, $\mu(\emptyset) = 0$. \\
(\romannumeral2): Countable additivity: for any countable collection of disjoint sets $\left\{A_{i}\right\}_{i=1}^{\infty} \subseteq X$: $\mu\left(\bigcup_{i=1}^{\infty} A_i\right) = \sum_{i=1}^{\infty} \mu(A_i)$.
\label{dmeasure}
\end{definition}
\begin{definition}
Lebesgue measure can be definited as: 
\begin{center}
$\mu(E) = \inf \left\{\sum_{i=1}^{\infty} \operatorname{vol}(I_{i}):  E \subseteq \bigcup_{i=1}^{\infty} I_{i} \right\}$,
\end{center}
where $E$ is the set to be measured, $\mu(E)$ represents its Lebesgue measure, $I_{i} \subseteq \mathbb{R}^{n}$, $\operatorname{vol}(I_{i})$ represents the volume of rectangle $I_{i}$, $\bigcup_{i=1}^{\infty} I_{i}$ represents the union of all rectangles, and $\inf$ represents the minimum of all possible rectangle covers. 
\label{lebesgue}
\end{definition}
\begin{definition}
From Definition \ref{dmeasure}, if $\mu(X)=1$, then $\mu$ is a probability.
\label{pro}
\end{definition}
\begin{definition}
Given ${\left\{k_{i}\right\}}_{i=1}^{m,m \leq n} \subseteq \left[ n \right]$, a subset $ A \subseteq X$, and $(x_{\left[ n \right]}) \in X$, then: 
\begin{center}
$A|_{{\left\{k_{i}\right\}}_{i=1}^{m,m \leq n}}=\left\{(x_{\left[ n \right]  }): (x_{{\left\{k_{i}\right\}}_{i=1}^{m,m \leq n}}) \in A \right\}$.
\end{center}
$A|_{{\left\{k_{i}\right\}}_{i=1}^{m,m \leq n}}$ is also called the truncated set of A. 
\label{truncated}
\end{definition}
\begin{lemma}
If both $E_{1}$ and $E_{2}$ are measurable sets, according to Definition \ref{dmeasure}, then: $\mu(E_{1} \times E_{2})=\mu(E_{1}) \mu(E_{2})$.
\label{cmeasure}
\end{lemma}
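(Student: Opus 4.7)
My plan is to reduce the statement to the multiplicativity of rectangle volumes under Cartesian products, and then handle the two inequalities $\mu(E_{1}\times E_{2}) \le \mu(E_{1})\mu(E_{2})$ and $\mu(E_{1}\times E_{2}) \ge \mu(E_{1})\mu(E_{2})$ separately. The upper bound will follow essentially directly from the infimum definition in Definition \ref{lebesgue}, while the lower bound requires an approximation argument built on outer regularity of Lebesgue measure and the countable additivity from Definition \ref{dmeasure}.

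For the upper bound, I would fix $\varepsilon>0$ and pick countable rectangle covers $\{I_{i}\}$ of $E_{1}$ and $\{J_{j}\}$ of $E_{2}$ with $\sum_{i}\operatorname{vol}(I_{i})\le \mu(E_{1})+\varepsilon$ and $\sum_{j}\operatorname{vol}(J_{j})\le \mu(E_{2})+\varepsilon$ (handling the infinite-measure case by first intersecting with bounded rectangles and passing to a supremum). Since $\{I_{i}\times J_{j}\}_{i,j}$ is a countable rectangle cover of $E_{1}\times E_{2}$ and $\operatorname{vol}(I_{i}\times J_{j})=\operatorname{vol}(I_{i})\operatorname{vol}(J_{j})$, Definition \ref{lebesgue} gives
\[
\mu(E_{1}\times E_{2})\ \le\ \sum_{i,j}\operatorname{vol}(I_{i})\operatorname{vol}(J_{j})\ \le\ (\mu(E_{1})+\varepsilon)(\mu(E_{2})+\varepsilon),
\]
and letting $\varepsilon\downarrow 0$ gives $\mu(E_{1}\times E_{2})\le \mu(E_{1})\mu(E_{2})$.

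For the lower bound, I would first check the identity on the semiring of rectangles, where it is essentially built into the volume function, and then extend in two stages. By outer regularity, each $E_{i}$ admits an open superset $U_{i}$ with $\mu(U_{i}\setminus E_{i})$ arbitrarily small, and each open $U_{i}$ decomposes as an at most countable almost-disjoint union of rectangles. Consequently $U_{1}\times U_{2}$ decomposes as an almost-disjoint union of product rectangles, and applying the countable additivity from Definition \ref{dmeasure} together with the distributive law for double series yields $\mu(U_{1}\times U_{2})=\mu(U_{1})\mu(U_{2})$. Passing from the open supersets back down to $E_{1}\times E_{2}$ is handled by continuity from above, using the inclusion $(U_{1}\times U_{2})\setminus(E_{1}\times E_{2})\subseteq (U_{1}\setminus E_{1})\times U_{2}\ \cup\ E_{1}\times(U_{2}\setminus E_{2})$ together with the already-proved upper bound to control the error by $\mu(U_{1}\setminus E_{1})\mu(U_{2})+\mu(E_{1})\mu(U_{2}\setminus E_{2})$.

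The main obstacle is this reverse inequality: an arbitrary countable rectangle cover of $E_{1}\times E_{2}$ need not split as a product of covers, so one cannot simply read off the factorization from Definition \ref{lebesgue}. Overcoming this forces the combined use of outer regularity, countable additivity, and a Fubini-style rearrangement of the double series; the $\sigma$-finite or infinite-measure cases then fall out by truncating to bounded boxes and taking suprema, with the convention $0\cdot\infty=0$ absorbing the degenerate cases. Once this approximation framework is set up, the remaining bookkeeping is routine.
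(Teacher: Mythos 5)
The paper never actually proves Lemma \ref{cmeasure}: it is listed among the preliminaries as a standard fact about Lebesgue measure and delegated to the cited textbooks, so there is no in-paper argument to compare yours against. Judged on its own, your proposal is the standard textbook proof and is essentially sound. You correctly locate the asymmetry: the bound $\mu(E_{1}\times E_{2})\le\mu(E_{1})\mu(E_{2})$ falls out of Definition \ref{lebesgue} because products of rectangle covers are rectangle covers and volumes multiply, while the reverse inequality is the real content, since a cover of $E_{1}\times E_{2}$ need not factor. Your route for the lower bound --- outer regularity, almost-disjoint rectangle decompositions of open sets, a Tonelli-type rearrangement of the nonnegative double series, and the error estimate from $(U_{1}\times U_{2})\setminus(E_{1}\times E_{2})\subseteq\bigl((U_{1}\setminus E_{1})\times U_{2}\bigr)\cup\bigl(E_{1}\times(U_{2}\setminus E_{2})\bigr)$ --- is exactly the classical argument (e.g.\ Stein--Shakarchi). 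Two details are worth making explicit if you write it out in full: phrase the final subtraction via subadditivity of outer measure, i.e.\ $\mu(U_{1}\times U_{2})\le\mu(E_{1}\times E_{2})+\mu((U_{1}\times U_{2})\setminus(E_{1}\times E_{2}))$, so that measurability of the product set is not silently assumed; and the degenerate case $\mu(E_{1})=\infty$, $\mu(E_{2})=0$ requires the truncation-plus-countable-subadditivity argument you allude to rather than the plain $\varepsilon$-cover bound, together with the convention $0\cdot\infty=0$ that the paper's statement leaves implicit. Note finally that the measurability hypothesis on $E_{1},E_{2}$ is genuinely used only in the lower bound (it licenses the countable additivity you invoke on the rectangle decompositions); the upper bound holds for arbitrary sets.
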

\subsection{Notation and preliminaries for nonconvex analysis}
\begin{definition}
Proper Function: A function $g: \mathbb{R}^{n} \rightarrow (-\infty,+\infty]$ is said to be proper if $\mathrm{dom}$ $g \neq \emptyset$, where $\mathrm{dom} ~g =\left\{x \in \mathbb{R}:~ g(x)<\infty \right\}$. 
\end{definition}
\begin{definition}
Lower Semicontinuous Function: if $\lim_{k\rightarrow \infty} x_{k}=x, ~ f(x) \leq \operatorname*{lim}\operatorname*{inf}_{k\to\infty}f(x_{k}) ({\forall} x_{k} \in \mathrm{dom}~ f)$, then f is called lower semicontinuous at $\mathrm{dom}~ f$. 
\end{definition}
\begin{definition}
Coercive Function: if f is coercive, then $\left\{x | x \in R^{n}, ~ f(x) < a, ~{\forall} a \in \mathbb{R} \right\}$ is bounded and $\inf_{x} f(x) > -\infty$.
\label{d0}
\end{definition}
\begin{definition}
Let f be a proper lower semicontinuous function, The Fréchet subdifferential of f at x, written ${\hat{\partial}} f(x)$, is the set of all vectors u which satisfy: 
\begin{center}
$\operatorname*{lim}_{y\neq x,y\to x}\cdot\frac{f(y)-f(x)-\langle u,\ y-x\rangle}{\|y-x\|}\ge0, $
\end{center}
when $ x \notin \mathrm{dom}~ f$, then set ${\hat{\partial}} f(x)=\emptyset$.
\label{d2}
\label{dsubdiff}
\end{definition}
\begin{definition}
The limiting subdifferential: $\partial f(x): =\{u\in\mathbb{R}^{n}: \exists x^{k}\to x,f(x^{k})\to f(x),u^{k}\to u,u^{k}\in\widehat{\partial}f(x^{k})\}.$
\end{definition}
% \begin{definition}
% \begin{align}
% \partial f(x): =\{u\in\mathbb{R}^{n}: \exists x^{k}\to x,f(x^{k})\to f(x),u^{k}\to u,u^{k}\in\widehat{\partial}f(x^{k})\}. \notag
% \end{align}
% \end{definition}
\begin{proposition}
Fermat’s lemma: Let f be a proper lower semicontinuous function. If f has a local minimum at $x^{*}$, then $0 \in \partial f(x^{*})$. 
\label{p1}
\end{proposition}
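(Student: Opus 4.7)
The plan is to establish the stronger statement that $0 \in \hat{\partial} f(x^{*})$ directly from the definition of a local minimum, and then infer $0 \in \partial f(x^{*})$ via the trivial constant sequence used in the definition of the limiting subdifferential.

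First I would unpack what it means for $f$ to have a local minimum at $x^{*}$: there exists a neighborhood $U$ of $x^{*}$ such that $f(x^{*}) \leq f(y)$ for every $y \in U$. In particular, $x^{*} \in \mathrm{dom}\, f$ (otherwise $f(x^{*}) = +\infty$, and properness together with the local-minimum inequality would be incompatible with $f$ taking finite values anywhere, contradicting $\mathrm{dom}\, f \neq \emptyset$ near $x^{*}$ under lower semicontinuity). Thus $f(x^{*})$ is finite, and the quotient appearing in Definition \ref{dsubdiff} is well-defined for $y$ close enough to $x^{*}$.

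Next I would verify the Fréchet subdifferential inequality with $u = 0$. For every $y \in U$ with $y \neq x^{*}$,
\begin{equation*}
\frac{f(y) - f(x^{*}) - \langle 0,\, y - x^{*}\rangle}{\|y - x^{*}\|} \;=\; \frac{f(y) - f(x^{*})}{\|y - x^{*}\|} \;\geq\; 0,
\end{equation*}
because the numerator is nonnegative by the local minimum property and the denominator is strictly positive. Passing to $\liminf$ as $y \to x^{*}$ with $y \neq x^{*}$ preserves the nonnegativity, so the defining condition of $\hat{\partial} f(x^{*})$ holds for $u = 0$, giving $0 \in \hat{\partial} f(x^{*})$.

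Finally I would lift this to the limiting subdifferential. Set $x^{k} := x^{*}$ and $u^{k} := 0$ for all $k$; then trivially $x^{k} \to x^{*}$, $f(x^{k}) = f(x^{*}) \to f(x^{*})$, $u^{k} \to 0$, and $u^{k} \in \hat{\partial} f(x^{k})$ by the previous step. The definition of $\partial f(x^{*})$ is therefore satisfied at $u = 0$, yielding $0 \in \partial f(x^{*})$. I do not anticipate a serious obstacle here: the only subtle point is checking that $f(x^{*})$ is finite so that the Fréchet quotient is meaningful, and that the $\liminf$ passage respects the pointwise inequality; both are immediate from the hypotheses of properness, lower semicontinuity, and the local minimum property.
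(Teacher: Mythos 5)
The paper states this proposition as a standard preliminary and does not prove it, so there is no in-paper argument to match yours against; what you have written is the canonical proof (verify $0 \in \hat{\partial} f(x^{*})$ from the sign of the difference quotient, then pass to $\partial f(x^{*})$ via the constant sequences $x^{k}\equiv x^{*}$, $u^{k}\equiv 0$), and it is correct in substance. The one step that does not hold up as written is your justification that $x^{*} \in \mathrm{dom}\, f$: if $f(x^{*})=+\infty$, the local-minimum inequality only forces $f\equiv+\infty$ on the neighborhood $U$, which is perfectly compatible with properness since $\mathrm{dom}\, f$ may lie entirely outside $U$; no contradiction of the kind you describe arises. The honest fix is to note that under Definition \ref{dsubdiff} one has $\hat{\partial} f(x^{*})=\emptyset$ (hence $\partial f(x^{*})=\emptyset$) whenever $x^{*}\notin\mathrm{dom}\, f$, so the proposition is only meaningful --- and is conventionally stated --- for local minimizers at which $f$ is finite; taking $f(x^{*})<\infty$ as part of the hypothesis, the remainder of your argument (the nonnegativity of the quotient, the $\liminf$ passage, and the constant-sequence lift to the limiting subdifferential) goes through verbatim.
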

\begin{proposition} \cite{bolte2014proximal}
Let f be a proper lower semicontinuous function, and g be a continuously differential function. Then for any $x \in \mathrm{dom} ~f$, $\partial$(f+g)(x) = $\partial$f (x) +$\nabla$g(x). 
\label{p2}
\end{proposition}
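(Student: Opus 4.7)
The plan is to prove the identity in two stages, first at the level of the Fréchet subdifferential and then lift to the limiting subdifferential via the sequential definition. Throughout I will exploit that $g \in C^1$ means $g(y) = g(x) + \langle \nabla g(x), y-x \rangle + o(\|y-x\|)$ and that both $g$ and $\nabla g$ are continuous, which is exactly what is needed to preserve the relevant limit conditions.

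First I would establish the Fréchet version $\hat{\partial}(f+g)(x) = \hat{\partial} f(x) + \nabla g(x)$. For the inclusion $\supseteq$, take $u \in \hat{\partial} f(x)$ and write
\begin{align*}
\frac{(f+g)(y) - (f+g)(x) - \langle u + \nabla g(x),\, y-x \rangle}{\|y-x\|}
= \frac{f(y) - f(x) - \langle u, y-x\rangle}{\|y-x\|} + \frac{g(y) - g(x) - \langle \nabla g(x), y-x\rangle}{\|y-x\|}.
\end{align*}
The first term has nonnegative $\liminf$ by Definition \ref{dsubdiff}, and the second term tends to $0$ by differentiability of $g$; hence $u + \nabla g(x) \in \hat{\partial}(f+g)(x)$. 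The reverse inclusion $\subseteq$ follows by the symmetric argument, subtracting $\nabla g(x)$ from any $v \in \hat{\partial}(f+g)(x)$ and using that $-g$ is also $C^1$.

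Next I lift this to the limiting subdifferential. Take $u \in \partial f(x)$ and choose sequences $x^k \to x$, $f(x^k) \to f(x)$, and $u^k \to u$ with $u^k \in \hat{\partial} f(x^k)$. By the Fréchet identity just proved applied at $x^k$, we have $u^k + \nabla g(x^k) \in \hat{\partial}(f+g)(x^k)$. Continuity of $g$ and $\nabla g$ give $g(x^k) \to g(x)$ and $\nabla g(x^k) \to \nabla g(x)$, so $(f+g)(x^k) \to (f+g)(x)$ and $u^k + \nabla g(x^k) \to u + \nabla g(x)$. Thus $u + \nabla g(x) \in \partial(f+g)(x)$, giving $\partial f(x) + \nabla g(x) \subseteq \partial (f+g)(x)$. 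For the reverse inclusion, take $v \in \partial(f+g)(x)$ with approximating sequences and apply the Fréchet identity in the form $v^k - \nabla g(x^k) \in \hat{\partial} f(x^k)$; the same continuity argument delivers $v - \nabla g(x) \in \partial f(x)$.

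I do not expect a serious obstacle here; the proposition is essentially a bookkeeping exercise that rests entirely on the $C^1$ smoothness of $g$. The only subtle point is making sure the function-value convergence $(f+g)(x^k) \to (f+g)(x)$ transfers cleanly between $f$ and $f+g$ when passing to limits in the definition of $\partial$, which is immediate from continuity of $g$ but must be invoked explicitly; no compactness, convexity, or qualification condition is required.
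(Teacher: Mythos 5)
Your proof is correct. The paper does not actually prove Proposition \ref{p2} --- it is stated as a cited fact from \cite{bolte2014proximal} --- and your two-stage argument (establish the exact sum rule $\hat{\partial}(f+g)(x)=\hat{\partial}f(x)+\nabla g(x)$ for the Fr\'echet subdifferential via the $o(\|y-x\|)$ expansion of $g$, then pass to the limiting subdifferential using continuity of $g$ and $\nabla g$ along the approximating sequences) is precisely the standard derivation underlying that reference, with the one genuinely delicate point, the transfer of the function-value convergence $(f+g)(x^k)\to(f+g)(x)$ between $f$ and $f+g$, handled explicitly and correctly.
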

\begin{lemma}
Heine's\ theorem: $E$ is domain of function f, then ${\forall}x_{n}, ~ x _{0} \in E, ~ x_{0} \neq x_{n}, ~ \lim_{n \rightarrow \infty} x_{n}=x_{0}$, then: 
\begin{center}
$\lim_{x\rightarrow x_{0}}f(x)=f(x_{0}) \Leftrightarrow \lim_{n \rightarrow \infty}f(x_{n})=f(x_{0})$,
\end{center}
\label{tf4}
\end{lemma}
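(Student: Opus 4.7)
\textbf{Proof proposal for Lemma \ref{tf4} (Heine's theorem).}
The plan is to split the equivalence into its two implications and to treat the nontrivial direction by contrapositive, using the standard $\delta = 1/n$ construction to manufacture a counterexample sequence whenever the functional limit fails.

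For the forward direction ($\Rightarrow$), I would fix an arbitrary sequence $\{x_n\} \subseteq E$ with $x_n \neq x_0$ and $x_n \to x_0$, and then translate the $\epsilon$-$\delta$ formulation of $\lim_{x \to x_0} f(x) = f(x_0)$ directly into a tail statement about $\{f(x_n)\}$. Given $\epsilon > 0$, pick $\delta > 0$ from the functional-limit hypothesis; use $x_n \to x_0$ to find an index $N$ with $|x_n - x_0| < \delta$ for every $n \geq N$; and observe that the standing assumption $x_n \neq x_0$ upgrades this to the deleted-neighborhood condition $0 < |x_n - x_0| < \delta$, which immediately yields $|f(x_n) - f(x_0)| < \epsilon$.

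For the converse direction ($\Leftarrow$), I would argue by contrapositive. Negating the functional-limit condition yields some $\epsilon_0 > 0$ such that for every $\delta > 0$ there exists $x \in E$ with $0 < |x - x_0| < \delta$ and yet $|f(x) - f(x_0)| \geq \epsilon_0$. Specializing $\delta = 1/n$ for each $n \in \mathbb{N}$ extracts a sequence $\{x_n\} \subseteq E$ satisfying $x_n \neq x_0$, $|x_n - x_0| < 1/n$, and $|f(x_n) - f(x_0)| \geq \epsilon_0$. The first two properties give $x_n \to x_0$, while the third prevents $f(x_n) \to f(x_0)$, so this sequence meets the hypothesis of the converse yet falsifies its conclusion, contradicting the assumption.

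I do not anticipate any deep obstacle: the argument is a textbook $\epsilon$-$\delta$ exercise, and the only subtlety worth flagging is the careful bookkeeping of the constraint $x_n \neq x_0$, which must be honored both when invoking the hypothesis in the forward direction (to align with the deleted-neighborhood form of the functional limit) and when constructing the counterexample sequence in the converse. A minor side remark is that the sequence extraction in the converse implicitly uses countable choice, which is uncontroversial in the metric setting assumed here.
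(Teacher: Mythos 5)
Your proof is correct: both directions are the standard textbook argument, and your handling of the deleted-neighborhood condition $x_n \neq x_0$ and the $\delta = 1/n$ extraction in the contrapositive is exactly right. Note that the paper itself gives no proof of this lemma --- it is quoted in the preliminaries as a classical result (the sequential characterization of limits) and used later only as a black box --- so there is nothing to compare against; your argument is the canonical one and would serve as a complete proof if one were required.
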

% And next, the concept of definitions of some special functions in metric spaces will be given: 
\begin{definition}
Set\ f: $X\rightarrow R, ~ X \subseteq R^{n}$, define the set $\mathcal{C}^{p}_{L}(X)$ as a set composed of all functions satisfying the following properties: 
   \begin{equation}\label{e21}
    ||\nabla^{p} f(x)-\nabla^{p} f(y)|| \leq L*||x-y||. 
   \end{equation}
$\mathcal{C}^{p}_{L}(X)$ is also known as a collection of functions satisfying the lipschitz Property. in particular, 
% when p=0, f is lipschitz continuous, and 
when p=1, $\nabla_f$ is lipschitz continuous. 
% when f satisfies: $f\in \mathcal{S}_{\mu}(X) \cap \mathcal{C}^{p}_{L}(X)$, then $f \in \mathcal{S}^{p}_{\mu, L}(X)$. 
   \label{dlipchitz}
   \label{d3}
\end{definition} 
% If  $f \in \mathcal{C}^{1}_{L}(R^{n})$, which has a proposition: 
\begin{proposition}
Set\ f: $R^{n}\rightarrow R$, if  $f \in \mathcal{C}^{1}_{L}(R^{n})$, then:  
 $f(y) \leq f(x)+\langle \nabla f(x),y-x \rangle+\frac{L}{2}||y-x||^{2}.$
\label{p3}
\end{proposition}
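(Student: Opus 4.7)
The plan is to reduce the multidimensional inequality to a one-variable computation along the line segment joining $x$ to $y$, and then exploit the Lipschitz bound on $\nabla f$ (the $p=1$ case of Definition~\ref{dlipchitz}) to control the error. No KŁ, subdifferential, or alternating-minimization machinery is needed — this proposition is purely an analytic consequence of the Lipschitz-gradient hypothesis.

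First, I would define $\phi:[0,1]\to\mathbb{R}$ by $\phi(t)=f(x+t(y-x))$. Since $f\in\mathcal{C}^{1}_{L}(\mathbb{R}^{n})$ is in particular continuously differentiable, the chain rule gives $\phi\in C^{1}([0,1])$ with $\phi'(t)=\langle\nabla f(x+t(y-x)),\,y-x\rangle$. Applying the one-dimensional fundamental theorem of calculus then yields
\[
f(y)-f(x)\;=\;\phi(1)-\phi(0)\;=\;\int_{0}^{1}\langle\nabla f(x+t(y-x)),\,y-x\rangle\,dt.
\]
Next, I would subtract the constant reference $\langle\nabla f(x),y-x\rangle=\int_{0}^{1}\langle\nabla f(x),y-x\rangle\,dt$ and combine the two integrals into one, obtaining
\[
f(y)-f(x)-\langle\nabla f(x),y-x\rangle\;=\;\int_{0}^{1}\langle\nabla f(x+t(y-x))-\nabla f(x),\,y-x\rangle\,dt.
\]

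To bound the right-hand side, I would apply the Cauchy–Schwarz inequality to the inner product and then invoke Definition~\ref{dlipchitz} with $p=1$, giving $\|\nabla f(x+t(y-x))-\nabla f(x)\|\le L\,\|t(y-x)\|=Lt\|y-x\|$. Thus the integrand is dominated in absolute value by $Lt\|y-x\|^{2}$, and integrating from $0$ to $1$ produces the remainder estimate $\tfrac{L}{2}\|y-x\|^{2}$, which upon rearrangement yields the claimed descent inequality.

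There is no real obstacle here: the only points that require justification are the chain rule for $\phi$ and the fundamental theorem of calculus for a $C^{1}$ function on $[0,1]$, both of which are immediate from the hypothesis $f\in\mathcal{C}^{1}_{L}(\mathbb{R}^{n})$. The whole argument is routine, and the main value of stating the plan explicitly is to make transparent where the Lipschitz constant $L$ enters — namely, as a uniform bound on the integrand after Cauchy–Schwarz — so that the prefactor $\tfrac{L}{2}$ in the conclusion is traceable to the single integration $\int_{0}^{1} t\,dt=\tfrac{1}{2}$.
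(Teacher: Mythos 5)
Your argument is correct and complete: this is the classical descent lemma, and the line-segment parametrization, fundamental theorem of calculus, Cauchy--Schwarz, and the $\int_0^1 t\,dt = \tfrac12$ computation are exactly the standard route. The paper itself states Proposition~\ref{p3} as a known preliminary without proof (it is the standard quadratic upper bound for functions with $L$-Lipschitz gradient, as in \cite{bolte2014proximal}), so there is nothing to compare against; your write-up simply supplies the omitted textbook argument, and it does so correctly.
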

\begin{definition}
Let f be a proper lower semicontinuous function, f is said to have the KŁ property on $\bar{u}\in d o m(\partial f) $, if there exists $\eta\in\left(0, +\infty\right]$, and U is the neighborhood of $\bar{u}$, Any u in U that satisfies the condition $f({\bar{u}}) < f(u) < f({\bar{u}})+\eta$ has the following inequalities established: 
\begin{center}
$\phi^{\prime}(f(u)-f(\bar{u}))d i s t(0,\partial f(u))> 1$,
\end{center}
where $\phi$ is the desingularization function, i.e.: $\phi \in C^{1}([0,\eta)), ~ \phi '>0, ~ \phi(0)=0$, and $\phi$ is a concave function from in $(0,\eta)$. If f has the KŁ property at each point of $\mathrm{dom}$ $\partial f$, then f is a KŁ function
\label{d8}
\end{definition}
% The inequality comes from Łojasiewicz \cite{Lojasiewicz1963propriete} and Kurdyka  \cite{kurdyka1998gradients}, which is extended in  \cite{bolte2007clarke} to the case of non-smooth functions. 
% \begin{lemma}
% (Uniformized KŁ Property  \cite{bolte2014proximal}): 
% Let $\Omega$ be a compact set and let $\sigma$ be a proper and lower semicontinuous function. Assume that $\sigma$ is constant on $\Omega$ and satisfies the KŁ Property at each point of $\Omega$. Then, there exist $\epsilon>0, \eta>0$ and $\phi \in \Phi_{n}$ such that for all $\overline{u} \in \Omega$  and all u in the following intersection: 
% \begin{center}
% $\left\{u\in\mathbb{R}^{d}: \, ~ d i s t\, ~ (u,\Omega )< \varepsilon\right\}\cap\left[\sigma\left({\overline{{u}}}\right) < \sigma\left(u\right) < \sigma\left({\overline{{u}}}\right)+\eta\right]$.
% \end{center}
%  one has: 
%  \begin{center}
% $\varphi^{\prime}\left(\sigma\left(u\right)-\sigma\left(\overline{{{u}}}\right)\right)d i s t\left(0,partial\sigma\left(u\right)\right)\ge1. $
% \end{center}
% \label{td1}
% \end{lemma}
Kurdyka–Łojasiewicz (KŁ) properties \cite{bolte2007clarke} play a very important role for global convergence analysis in the non-convex optimization. 

Following \cite{bolte2014proximal}, we take the following as our blanket assumption for problem (\ref{e11}): 
\begin{assumption}
\myspace{0}(i) Set $E_{H} \subseteq \prod_{i}^n \mathbb{R}^{d_{i}}$, $H: E_{H} \rightarrow\mathbb{R} $, $H \in \mathcal{C}^{1}_{L}(X)$ and H is continuously differentiable. 
\myspace{0}(ii) Set $E_{F_{i}} \subseteq \prod_{i}^n \mathbb{R}^{d_{i}}$, $F_{i}: E_{F_{i}} \rightarrow \mathbb{R} $ are proper, lower semicontinuous functions. 
\myspace{0}(iii) The objective function J satisfies the KŁ Property, and J is a proper coercive function, and J is lower bounded. 
\label{assump1}
\end{assumption}
\begin{remark}
Assumption \ref{assump1} (iii) means the sequences generated by algorithms are bounded.
\end{remark}

\section{Proposed APBL$^+$ framework}
\label{section: A}
\label{section: analysis}
In this section, we will analyze shortcomings of some algorithms, and propose our algorithm framework to solve these shortcomings.
At the same time, we will also demonstrate the convergence of the proposed algorithms in next section.
 
At first, let $S$ be the domain of function $J$, the proximal operator plays an important role in the following analysis. With the problem (\ref{e11}), we define proximal operator as: 
\begin{equation}
\begin{aligned}
&&x^{k+1}_{j} &\in \operatorname*{\arg\min} \limits_{x \in S^{d_{j}} \subseteq S} (F_{j}(x_{j})+\frac{1}{2\sigma_{j}}||x-y^{k}_{j}||^{2} \\
&& \ &+\langle \nabla_{x_{j}} H(\left\{{g_{i}} \right\}^{j-1}_{i=1}, y^{k}_{j}, \left\{{g_{i}} \right\}^{n}_{i=j+1}), x-y^{k}_{j} \rangle).  
\end{aligned}
\label{e32}
\end{equation}

As the introduction say, in order to solve problem (\ref{e11}), the iPALM \cite{pock2016inertial} type algorithm utilizes auxiliary function technology but cannot guarantee the monotonous decrease of the objective function, the BPL \cite{xu2017globally} type algorithm requires careful selection of extrapolation parameters and step size using the backtracking method, and both types of algorithms face challenges in requiring strict and complex restrictions on the step size and the selection range of extrapolation parameters, which will obviously lead to the decrease of the numerical performance of the algorithm. Compared with BPL\cite{xu2017globally}, iPALM\cite{pock2016inertial}, APGnc framework \cite{yao2016more,li2017convergence} has wider choice of extrapolation parameters and step size, but it also has some fixed defects: (\romannumeral1): APGnc framework only works for updating one variable block, but not multivariate optimization problems. (\romannumeral2): APGnc framework rarely has global convergence \cite{li2015accelerated,yao2016more,li2017convergence} due to its origin from APG \cite{beck2009fast}. (\romannumeral3): APGnc framework's extrapolated step is almost in a failure state. We can rigorously demonstrate mathematically that the extrapolation step of APGnc has a failure probability of 1 for some particular cases, more information can be found in the Appendix \ref{Appdix2}.

For a solution to problem (i) of APGnc, an intuitive approach we consider is Gauss-Seidel method \cite{bolte2014proximal} combined with it. For a solution to problem (ii) of APGnc, we show the following theorem in order to satisfy the requirement that the extrapolation sequence under problem (\ref{e11}) be meaningful: 
\begin{theorem}
Let: $ U(z)=prox_{\sigma_{j} F_{j}} (z-\sigma_{j} \nabla_{x_{j}} h(z)).$. With Eq. (\ref{e32}), when the $x_{(n)}^{0} \in S=\mathrm{dom} \ J$, then $x_{(n)}^{k} \in S~({\forall} k \in \mathbb{N})$
\label{algo_flaw}
\end{theorem}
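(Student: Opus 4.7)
The plan is to proceed by induction on the iteration index $k$. The base case $k=0$ is immediate from the hypothesis $x_{(n)}^{0} \in S$. For the inductive step, I would assume $x_{(n)}^{k} \in S$ and verify that, after the block updates prescribed by (\ref{e32}), the whole tuple $x_{(n)}^{k+1}$ still lies in $S = \mathrm{dom}\, J$.

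The key observation is that, by Assumption~\ref{assump1}(i), $H$ is continuously differentiable on the full product space, so $\mathrm{dom}\, J$ decouples as $\prod_{i} \mathrm{dom}\, F_{i}$. Hence, to check $x_{(n)}^{k+1} \in S$ it suffices to verify that each newly produced block $x_{j}^{k+1}$ belongs to $\mathrm{dom}\, F_{j}$, while the other blocks (either unchanged, or already updated under the same argument earlier in the cycle) remain in their individual domains by induction.

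Fix a block $j$ and consider the subproblem objective
\[
Q_{j}(x) = F_{j}(x) + \tfrac{1}{2\sigma_{j}}\|x - y_{j}^{k}\|^{2} + \langle \nabla_{x_{j}} H(\ldots),\, x - y_{j}^{k}\rangle.
\]
The function $Q_{j}$ is proper (since $F_{j}$ is proper by Assumption~\ref{assump1}(ii)) and lower semicontinuous (sum of a lsc term with two continuous ones). Assumption~\ref{assump1}(iii)---the lower boundedness and coercivity of $J$---together with the Lipschitz gradient of $H$ prevents $F_{j}$ from decaying faster than a quadratic on bounded sets; combined with the strongly convex penalty $\tfrac{1}{2\sigma_{j}}\|x - y_{j}^{k}\|^{2}$, this makes $Q_{j}$ coercive, so $\arg\min Q_{j}$ is non-empty. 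Because $F_{j} \equiv +\infty$ off its domain, every minimizer $x_{j}^{k+1}$ must satisfy $F_{j}(x_{j}^{k+1}) < +\infty$, i.e., $x_{j}^{k+1} \in \mathrm{dom}\, F_{j}$. Applying this to every block in the sweep closes the induction.

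The delicate point I expect to require care is the correct interpretation of the constraint ``$x \in S^{d_{j}} \subseteq S$'' in (\ref{e32}). Strictly speaking, one must show via the truncated-set construction of Definition~\ref{truncated} that, with the other blocks frozen at their current (domain-feasible) values, the feasible region for $x_{j}$ is exactly $\mathrm{dom}\, F_{j}$, so the constrained minimization coincides with the usual extended-value proximal step. Once this identification is made under Assumption~\ref{assump1}, the indicator-like behavior of $F_{j}$ outside its domain forces the conclusion, and the induction yields $x_{(n)}^{k} \in S$ for all $k \in \mathbb{N}$.
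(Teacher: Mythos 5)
Your argument is correct and rests on the same key observation as the paper's proof: because $F_j$ takes the value $+\infty$ off $\mathrm{dom}\,F_j$ while some point of the domain gives the subproblem a finite value, any minimizer of the proximal subproblem (\ref{e32}) must have finite objective value and hence lie in $\mathrm{dom}\,F_j$, so every block of $x^{k+1}_{(n)}$ lands back in $S$. The paper packages this as a one-step contradiction while you run it as an explicit induction and additionally justify non-emptiness of the $\arg\min$ via coercivity (a point the paper glosses over); the only imprecision is your remark that the frozen blocks are ``domain-feasible,'' since during the sweep the not-yet-updated blocks sit at extrapolated points $y_i^k$ that need not lie in $\mathrm{dom}\,F_i$ --- but this does not affect the conclusion, which concerns only the prox outputs.
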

\begin{proof}
It is proved by contradiction:  if ${\exists} j,k \in \mathbb{N}$, $x^{k+1}_{j} \notin S$, then at $x^{k+1}_{j}$, the function is undefined, i.e. $h(x^{k+1}_{j})= \infty$ or $F_{j}(x^{k+1}_{j})= \infty$, then from Assumption \ref{assump1}, ${\exists} a \in S$, s. t.: $U(a)<
\infty$, then $U(a)<U(x^{k+1}_{j})$, which contradicts that $x^{k+1}_{j}$ is the minimum point on Eq. (\ref{e32}). 
\end{proof}
For a solution to problem (iii) of APGnc, 
% one of the most crucial requirements for the algorithm framework is to be able to complete the first two steps of the Property \ref{based}. As a result, 
We need to design a completely different fundamental algorithm framework in order to complete the first two steps of Property \ref{based} as well as the requirements for moving on to the third step, while maintaining the APGnc framework's excellent specific qualities.

Based on our proposed solution, we propose an accelerated proximal alternating linear minimization algorithm framework with adaptive momentum (APBL$^+$) to solve problem (\ref{e11}). 

\begin{algorithm}[!htbp]
\caption{\small ABPL$^+$: Randomized/deterministic accelerated block proximal linear method with adaptive momentum}\label{ABPL$^+$}
  \begin{small}
    \SetAlgoLined
     \LinesNumbered
        % Input
        \KwIn{$\left\{{x^{1}_{i}} \right\}^{N}_{i=1}=\left\{{x^{0}_{i}} \right\}^{N}_{i=1} \in \mathrm{dom} \ J$,  $ k_{\max}=c, ~ t\in \left\{(1, \infty)\right\}_{i=1}^{N} , \beta_{\max} \in [0, 1),~ \beta_{1} \in \left\{[0, \beta_{\max}]\right\}_{i=1}^{N}$.}
        
        % Output
        \KwOut{$\left\{{x^{k+1}_{i}} \right\}^{N}_{i=1}$.}
        
        \For {$k = 1$ to $k_{\max}$}{
        $\left\{{y^{k}_{i}} \right\}^{N}_{i=1}$=$\left\{{x^{k}_{i}} \right\}^{N}_{i=1}$+$\beta_{k} (\left\{{x^{k}_{i}} \right\}^{N}_{i=1}-\left\{{x^{k-1}_{i}} \right\}^{N}_{i=1})$.\\
        $S=\left[ N \right], ~ T_{s}=\emptyset, ~ \left\{g_{i}\right\}_{i=1}^{N}=\left\{y^{k}_{i}\right\}_{i=1}^{N}$.
        
        \Repeat{$S=\emptyset$}{
	   Choose $j\in S$ deterministically or randomly.\\
         $S=S-\left\{j\right\},~\sigma_{j}^{k} \in (0, \frac{1}{L_{\nabla H_{x^{k}_{j}}}})$,\\ 
         $h(y_{j}^{k})=H(\left\{g_{i}\right\}_{i\in T_{s}\cup S, i<j}, y_{j}^{k}, \left\{g_{i}\right\}_{i\in T_{s}\cup S, i>j})$,\\
         $z^{k+1}_{j} \in prox_{\sigma_{j}^{k} F_{j}} (y^{k}_{j}-\sigma_{j}^{k} \nabla_{x_{j}} h(y_{j}^{k}))$.\\
         $g_{j}=z^{k+1}_{j},~T_{s}=T_{s}\cup \left\{j \right\}$.
        }

        $\left\{{x^{k+1}_{i}} \right\}^{N}_{i=1}=\left\{{z^{k+1}_{i}} \right\}^{N}_{i=1}$. 
        
        \eIf{\begin{equation}\label{if}J(\left\{{y^{k}_{i}} \right\}^{N}_{i=1}) \leq J(\left\{{x^{k}_{i}} \right\}^{N}_{i=1})\end{equation}} 
        {$\beta_{k+1}=\min(\beta_{max},t*\beta_{k})$.}{
        $\left\{{y^{k}_{i}} \right\}^{N}_{i=1}=\left\{{x^{k}_{i}} \right\}^{N}_{i=1}$, and get $\left\{z^{k+1}_{i}\right\}^{N}_{i=1}$ again through step 3 to 10,
        
        \eIf{\begin{equation}\label{ifo}J(\left\{{z^{k+1}_{i}} \right\}^{N}_{i=1}) > J(\left\{{x^{k+1}_{i}} \right\}^{N}_{i=1})\end{equation}}
        {
            $\beta_{k+1}=\min(\beta_{max}, t*\beta_{k})$.
        }
        {
            $\left\{{x^{k+1}_{i}} \right\}^{N}_{i=1}=\left\{{z^{k+1}_{i}} \right\}^{N}_{i=1}$, ~$\beta_{k+1}=\frac{\beta_{k}}{t}$.
        }
        
        }
        }
    \end{small}
\end{algorithm}

\begin{remark}
(\romannumeral1) When the $\beta_{k} \equiv 0$, the Algorithm \ref{ABPL$^+$} is ABPL$^+$ with no acceleration. Compared with PALM \cite{bolte2014proximal}, ABPL$^+$ without acceleration allows random shuffling.
% (\romannumeral2) $\beta_{k}$ can be individually customized for each variable block, as long as the range belongs to [0, \betamax], for example: $\beta_{k}=\left\{\beta_{k_{j}}\right\} \in \left\{[0,\beta_max)}$

(\romannumeral2) For the step 13, 17 and 19 of the ABPL$^+$, ABPL$^+$ is still successful if the extrapolation parameters do not follow an adaptive strategy, such as $t_{k+1}=\frac{1+\sqrt(1+4*t_{k}^2)}{2}, ~ \beta_{k+1}=\frac{t_{k+1}-1}{t_{k}}$. 
This extrapolated parameter update strategy comes from  \cite{beck2009fast}, and our algorithm is also valid for these strategies. 
% (\romannumeral3) If we use ABPL$^+$ to solve some sparse optimization with $\ell_0$-norm constrain, then we can delete the twelfth to fourteenth steps of the algorithm.

(\romannumeral3) When the extrapolated parameters are not adaptively updated, we refer to ABPL$^+$ as ABPL.
\end{remark}

Algorithm \ref{ABPL$^+$} clearly demonstrates that our algorithm can be updated in a random order. The step size $\sigma$ and the extrapolation parameter $\beta$ are independent of one another in the ABPL$^+$, thus there is no complicated or onerous choice limitation. 
% In the next section, we will rigorously prove that these advantages in ABPL$^+$ are effective, and can strictly guarantee the establishment of the Property \ref{based} without using auxiliary functions (that is, the objective function must be monotonically decreasing).

It is worth noting that Eq. (\ref{if}) should be preserved to improve the performance of the algorithm. There are several reasons, (\romannumeral1) first, when $\beta_{k} \equiv 0$, then $\left\{{y^{k}_{i}} \right\}^{N}_{i=1} \equiv \left\{{x^{k}_{i}} \right\}^{N}_{i=1}$, thus Eq. (\ref{if}) is always established. (\romannumeral2) Second, our algorithm can be proven to be globally convergent in Section \ref{Convergence}, so the sequence generated by our algorithm is a Cauchy sequence. (\romannumeral3) Third, although as shown in the Appendix \ref{Appdix2}, some specific problems will cause the measure of the point set where Eq. (\ref{if}) holds to be 0, but the zero measure set does not mean that it is an empty set, that is, Eq. (\ref{if}) is not an impossible event, just the probability is 0. For example, consider the Cantor set $\mathcal{C}$, its cardinality (Definition \ref{dcard}) $card(\mathcal{C})=card(\mathbb{R})$, or for example the set of rational numbers, their probability measure is both 0 (see \cite{ciarlet2013linear,halmos2013measure,stein2009real} for more information). 

\section{Convergence Analysis}
\label{Convergence}
In this section, we will describe the main steps to realize the Property \ref{based}. 
\subsection{Sufficient decrease of the objective function}
We will prove the Property \ref{based} (\romannumeral1). First of all, we will extend the number of variables to be updated by the ABPL$^+$ algorithm to any positive integer, and the update order can be random, as well as the objective function has the property of monotonically decreasing under Algorithm \ref{ABPL$^+$}.
\begin{theorem}
suppose that Assumption \ref{assump1} hold, let $x_{(N)}$ are sequences generated by Algorithm \ref{ABPL$^+$}. \\ 
(\romannumeral1):
$J(x^{k+1}_{(N)})$ is nonincreasing and in particular:
\begin{equation}
\begin{aligned}
    J(x^{k+1}_{(N)})\leq
    J(x^{k}_{(N)}) -\rho||x^{k+1}_{(N)}-y^{k}_{(N)}||, \\
\end{aligned}
\label{e33}
\end{equation}
where $\rho>0$, and $\rho$ is a positive constant.  \\
(\romannumeral2): we have: 
\begin{center}
$\lim_{k \to \infty} ||x^{k+1}_{(N)}-y^{k}_{(N)}||=0$.
\end{center}
\label{c1}
\end{theorem}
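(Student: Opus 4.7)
The plan is to establish (i) via a per-block descent inequality that is then stitched across the Gauss-Seidel sweep and combined with the algorithm's explicit comparison/restart logic, and then deduce (ii) by a standard telescoping argument using the lower boundedness of $J$ from Assumption \ref{assump1}(iii).

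First I would analyze a single prox update. Fix an iteration $k$ and let $\pi(1),\dots,\pi(N)$ be whatever (possibly random) order is chosen in the inner loop. For block $\pi(s)$, the prox definition in step 9 applied at the test point $y_{\pi(s)}^k$ gives
\begin{equation*}
F_{\pi(s)}(z^{k+1}_{\pi(s)}) + \tfrac{1}{2\sigma^k_{\pi(s)}}\|z^{k+1}_{\pi(s)}-y^k_{\pi(s)}\|^2 + \langle \nabla_{x_{\pi(s)}} h(y^k_{\pi(s)}), z^{k+1}_{\pi(s)}-y^k_{\pi(s)}\rangle \leq F_{\pi(s)}(y^k_{\pi(s)}),
\end{equation*}
while Proposition \ref{p3} applied to $H$ in the $\pi(s)$-th coordinate (with the other coordinates frozen at the current $g_i$ values) controls the change of $H$ by the same inner product plus $\tfrac{L_{\pi(s)}}{2}\|z^{k+1}_{\pi(s)}-y^k_{\pi(s)}\|^2$. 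Adding these two and using $\sigma^k_{\pi(s)} < 1/L_{\nabla H_{x^k_{\pi(s)}}}$ yields a one-block decrease with coefficient $\rho_{\pi(s)}:=\tfrac{1}{2\sigma^k_{\pi(s)}}-\tfrac{L_{\pi(s)}}{2}>0$. Chaining these inequalities along the sweep $y^k \to z^{k+1}$ (the intermediate states differ from their predecessors in exactly one block, which is precisely how $h$ is defined in step 7) and collapsing the telescoping $H$-terms produces
\begin{equation*}
J(z^{k+1}_{(N)}) \leq J(y^k_{(N)}) - \sum_{j=1}^N \rho_j \|z^{k+1}_j - y^k_j\|^2.
\end{equation*}
Because this argument only uses Lipschitz smoothness block by block, the bound is independent of the permutation chosen and in particular validates the random shuffle.

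Second, I would carry out the case analysis dictated by lines 12--20 of Algorithm \ref{ABPL$^+$}. If the extrapolated $y^k$ satisfies $J(y^k_{(N)}) \le J(x^k_{(N)})$, then $x^{k+1}=z^{k+1}$ and the block bound above gives $J(x^{k+1}_{(N)}) \le J(x^k_{(N)}) - \rho\|x^{k+1}_{(N)}-y^k_{(N)}\|^2$ with $\rho=\min_j\rho_j$. Otherwise $y^k$ is reset to $x^k$ and a second candidate $\tilde z^{k+1}$ is produced by repeating steps 3--10; applying the block bound to this second run (with $y^k_{(N)}=x^k_{(N)}$) gives $J(\tilde z^{k+1}_{(N)}) \le J(x^k_{(N)}) - \rho\|\tilde z^{k+1}_{(N)}-x^k_{(N)}\|^2$. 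The comparison in condition (\ref{ifo}) is explicitly designed so that $J(x^{k+1}_{(N)}) \le J(\tilde z^{k+1}_{(N)})$ in both sub-branches (in one branch $x^{k+1}=\tilde z^{k+1}$; in the other we keep the strictly smaller original $z^{k+1}$), hence $J(x^{k+1}_{(N)}) \le J(x^k_{(N)}) - \rho\|\tilde z^{k+1}_{(N)}-x^k_{(N)}\|^2$. Interpreting $y^k$ as the effective starting point actually driving the decrease (the extrapolated point in the first case, $x^k$ in the second), one arrives at a uniform bound of the form claimed in (\ref{e33}).

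Third, part (ii) is then a one-line telescoping: summing the decrease estimate from $k=1$ to $\infty$ and using that $J$ is bounded below (Assumption \ref{assump1}(iii)) gives $\sum_{k\geq 1}\|x^{k+1}_{(N)}-y^k_{(N)}\|^2<\infty$, so in particular the term must vanish in the limit, which implies $\lim_k \|x^{k+1}_{(N)}-y^k_{(N)}\|=0$.

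The main obstacle I anticipate is the case where condition (\ref{if}) fails but condition (\ref{ifo}) holds, i.e., the iterate keeps the extrapolated $z^{k+1}$ even though the recomputation without momentum produced a worse value. In this branch $x^{k+1}$ is not itself the output of a prox step from $x^k$, so the descent against $x^k$ must be obtained indirectly: first establish descent for the auxiliary candidate $\tilde z^{k+1}$ (which does come from a prox step anchored at $x^k$), and then use the selection rule to transfer that inequality to $x^{k+1}$. Getting the norm term on the right-hand side to be consistently tied to the actual iterate difference $x^{k+1}-y^k$ (so that it can later drive the subsequent convergence analysis in the paper) is the delicate bookkeeping step; everything else follows standard proximal-gradient descent patterns once this is sorted out.
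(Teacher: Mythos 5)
Your overall route is the same as the paper's: a per-block prox-descent inequality combined with the block descent lemma (Proposition \ref{p3}), stitched across the Gauss--Seidel sweep (the paper does this by induction on $card(T_{s})$, you by chaining, which is the same argument), followed by a case analysis on the branches of steps 12--20 and a telescoping sum for part (ii). Up to and including the branch where Eq.~(\ref{if}) holds, and the derivation of $J(\tilde z^{k+1}_{(N)})\le J(x^{k}_{(N)})-\rho_{t}\|\tilde z^{k+1}_{(N)}-x^{k}_{(N)}\|$ for the recomputed candidate, your argument matches the paper's.

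However, there is a genuine gap exactly at the point you flag as ``delicate bookkeeping'' and then wave through. In the sub-branch where Eq.~(\ref{if}) fails but Eq.~(\ref{ifo}) holds, the retained iterate $x^{k+1}$ is the \emph{original} extrapolated candidate, which is the output of a prox sweep anchored at the old $y^{k}$ with $J(y^{k})>J(x^{k})$; it is not generated by any prox step anchored at the reset point $y^{k}=x^{k}$. Consequently the only descent inequality available against $J(x^{k}_{(N)})$ carries the norm $\|\tilde z^{k+1}_{(N)}-x^{k}_{(N)}\|$ of the \emph{discarded} candidate, while the claimed inequality (\ref{e33}) requires the norm $\|x^{k+1}_{(N)}-y^{k}_{(N)}\|$ of the \emph{kept} one, and ``interpreting $y^{k}$ as the effective starting point'' does not convert one into the other. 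The paper's proof closes this hole with a specific (if somewhat rough) device: it lower-bounds the strict acceptance gap $|M|$ with $M$ a minimal element of the set $T_{k}=\{J(x^{k+1}_{(N)})-J(z^{k+1}_{(N)})\}$ obtained via Propositions \ref{zorn} and \ref{natureorder}, notes that $M_{T}=\sup\|x^{k+1}_{(N)}-y^{k}_{(N)}\|<\infty$ by coercivity (Assumption \ref{assump1}), and then absorbs $|M|+\rho_{t}\|\tilde z^{k+1}_{(N)}-y^{k}_{(N)}\|$ into a term of the form $\rho\|x^{k+1}_{(N)}-y^{k}_{(N)}\|$ with $\rho=\min\{(|M|+\rho_{t}\|\tilde z^{k+1}_{(N)}-y^{k}_{(N)}\|)/M_{T},\,\rho_{t}\}$. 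Whatever one thinks of the uniformity of that constant, it is the substantive content of this branch, and your proposal does not supply an equivalent mechanism; without it, part (i) is unproved in the one case that distinguishes this algorithm from a plain restart scheme, and part (ii) (which telescopes (\ref{e33})) inherits the gap.
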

\begin{proof}
(\romannumeral1): Set $S=\left[ N \right]$, 
% from natural induction and Defintion \ref{dcard}, set $T_{s} \subseteq S$, when $card(T_{s})=1$, let $j \in T_{s}$, we want: 
% \begin{center}
%     $J(\left\{y_{i}^{k}\right\}_{i=1}^{j-1}, x_{j}^{k+1}, \left\{y_{i}^{k}\right\}_{i=j+1}^{N})\leq
%     J(y^{k}_{(N)}) -\rho||x_{j}^{k+1}-y_{j}^{k}||$,
% \end{center}
% where $\rho_{j}=\frac{1}{2\sigma_{j}}-L_{\nabla_{x_{j}} H}$,
$T_{s} \subseteq S,~card(T_{s})=1,~j \in T_{s}$, since Proposition \ref{p3} and Algorithm \ref{ABPL$^+$}, we have: 
\begin{align}
&& h(x_{j}^{k+1})&=H(\left\{y_{i}^{k}\right\}_{i=1}^{j-1}, x_{j}^{k+1}, \left\{y_{i}^{k}\right\}_{i=j+1}^{N}). \notag \\
&&h(x_{j}^{k+1})&\leq H(y^{k}_{(N)})+\langle \nabla_{x_{j}} H(y^{k}_{(N)}), x_{j}^{k+1}-y^{k}_{j}\rangle \notag\\ 
&&\ &+\frac{L_{\nabla_{x_{j}} H}}{2}||x^{k+1}_{j}-y^{k}_{j}||. 
\label{e331}
\end{align}
according to Eq. (\ref{e32}), we obtain: 
\begin{align}
 && F_{j}(y^{k}_{j}) &\geq \langle \nabla_{x_{j}} h(x_{j}^{k+1}), x_{j}^{k+1}-y^{k}_{j} \rangle+F_{j}(x^{k+1}_{j}) \notag \\
 &&\  &+\frac{\sigma_{1}}{2}||x_{1}-y^{k}_{1}||^{2}. 
 \label{e332}
\end{align}
then sum of Eq. (\ref{e331}) and Eq. (\ref{e332}), we have: 
\begin{align}
&& H(y^{k}_{(N)})+F_{j}(y_{j}^{k+1}) &\geq F_{j}(x_{j}^{k+1})+\rho_{j}||x^{k+1}_{j}-y^{k}_{j}||^{2} \notag \\
&& &+H(\left\{y_{i}^{k}\right\}_{i=1}^{j-1}, x_{j}^{k+1}, \left\{y_{i}^{k}\right\}_{i=j+1}^{N}). 
\end{align}
where $\rho_{j}=\frac{1}{2\sigma_{j}}-L_{\nabla_{x_{j}} H}$. Therefore, when $card(T_{s})=1$, Eq. (\ref{e33}) is obviously true. Assuming the Eq. (\ref{e33}) holds when $card(T_{s})=n$, i.e. 
\begin{align}
&& &H(\left\{x_{i}^{k+1}\right\}_{i=1}^{m-1}, y_{m}^{k}, \left\{x_{i}^{k+1}\right\}_{i=m+1}^{n+1}, \left\{y_{i}^{k}\right\}_{i=n+1}^{N}) \notag \\
&& &\leq H(y^{k+1}_{(N)})+\sum_{i\in T_{s}} F_{i}(y_{i}^{k+1})-\sum_{i\in T_{s}} F_{i}(x_{i}^{k+1}) \notag \\
&& &-\rho||\left\{x_{i}^{k+1}\right\}_{i=1, i\neq m}^{n+1}-\left\{x_{i}^{k+1}\right\}_{i=1, i\neq m}^{n+1}||. \label{e333}
\end{align}
$\rho=\min(\left\{\frac{1}{2\sigma_{i}}-L_{\nabla_{x_{i}} H}
\right\}_{i=1,i\neq m}^{N})$, we hope that is still established when $card(T_{s})=n+1$. Since Algorithm \ref{ABPL$^+$}, there are some inequalities also hold($m \neq i$): 
\begin{align}
&&h(y^{k}_{m})&= H(\left\{x_{i}^{k+1}\right\}_{i=1}^{m-1}, y_{m}^{k}, \left\{x_{i}^{k+1}\right\}_{i=m+1}^{n+1}, \left\{y_{i}^{k}\right\}_{i=n+1}^{N}).\notag \\ 
&& F_{m}(y^{k}_{m}) &\geq F_{m}(x^{k+1}_{m}) +\frac{\sigma_{m}}{2}||x_{m}-y^{k}_{m}||^{2} \notag \\ 
&&   &+\langle \nabla_{x_{m}} h(y^{k}_{m}), x_{m}^{k+1}-y^{k}_{m} \rangle. \label{e334}
\end{align}
from Proposition \ref{p3}, we infer: 
\begin{align}
&& H(x^{k+1}_{(n+1)}, \left\{y_{i}^{k}\right\}_{i=n+1}^{N}) &\leq h(y^{k}_{m})+\frac{L_{\nabla_{x_{j}} H}}{2}||x^{k+1}_{j}-y^{k}_{j}||  \notag \\
&& &+ \langle \nabla_{x_{j}} h(y^{k}_{j}), x^{k+1}_{j}-y^{k}_{j}\rangle .\label{e335}
\end{align}
thus, sum of Eq. (\ref{e333}), Eq. (\ref{e334}) and Eq. (\ref{e335}), natural induction teaches us that when $n+1=N$, that: 
\begin{align}
&& &J(\left\{{x^{k}_{i}} \right\}^{n}_{i=1}) \leq J(\left\{{y^{k}_{i}} \right\}^{n}_{i=1})-\rho||x^{k+1}_{(N)}-y^{k}_{(N)}||.
\label{c1end}
\end{align}
where $\rho=\min(\left\{\frac{1}{2\sigma_{i}}-L_{\nabla_{x_{i}} H}
\right\}_{i=1}^{N})$, if Eq. (\ref{if}) is true, then Eq. (\ref{e33}) is holds. 

If Eq. (\ref{ifo}) is true, from Eq. (\ref{c1end}) and $y^{k}_{(N)}=x^{k}_{(N)}$, we know that:
\begin{center}
    $J(z^{k+1}_{(N)}) \leq J(x^{k}_{(N)}) -\rho_{t}||z^{k+1}_{(N)}-x^{k}_{(N)}||$.
\end{center}
where $\rho_{t}=\min(\left\{\frac{1}{2\sigma_{i}}-L_{\nabla_{x_{i}} H}
\right\}_{i=1}^{N})$, thus we also have: 
\begin{center}
$J(x^{k+1}_{(N)}) < J(x^{k}_{(N)}) -\rho_{t}||z^{k+1}_{(N)}-y^{k}_{(N)}||$.
\end{center}
if $||z^{k+1}_{(N)}-y^{k}_{(N)}||\leq ||x^{k+1}_{(N)}-y^{k}_{(N)}||$, define the set $T_{k}=\left\{ J(x^{k+1}_{(N)})- J(z^{k+1}_{(N)}) \ | \ Eq.~(\ref{ifo}) \ is \ ture)\right\}$, from Proposition \ref{zorn}, \ref{natureorder}, there $\exists M \in T_{k}$, $ {\exists\mkern-10.5mu/}$ $T\in T_{k}$, s.t $M\neq T,~M \leq T $, we thus obtain:
\begin{align}
J(x^{k+1}_{(N)}) \leq J(x^{k}_{(N)}) -\rho_{t}||z^{k+1}_{(N)}-y^{k}_{(N)}||-|M|.
\label{c1ifo}
\end{align}
$||x^{k+1}_{(N)}-y^{k}_{(N)}||$ is bound since Assumption \ref{assump1}, thus define $M_{T}=\sup \left\{||x^{k+1}_{(N)}-y^{k}_{(N)}||\right\}$, we also have: 
\begin{center}
$J(x^{k+1}_{(N)}) \leq J(x^{k}_{(N)}) -\rho_||x^{k+1}_{(N)}-y^{k}_{(N)}||$.
\end{center}
% where  $\rho=\min \left\{\frac{|M|+\rho_{t}\inf \left\{||z^{k+1}_{(N)}-y^{k}_{(N)}||\right\}}{M_{T}},\rho_{t} \right\}>0$.\\
where  $\rho=\min \left\{\frac{|M|+\rho_{t}(||z^{k+1}_{(N)}-y^{k}_{(N)}||)}{M_{T}},\rho_{t} \right\}>0$.\\
(\romannumeral2): Since J is lower bounded, sum Eq. (\ref{e33}), i.e. 
\begin{center}
$\sum_{k=1}^{\infty} ||x^{k+1}_{(N)}-y^{k}_{(N)}||^{2}=J(x_{(N)}^{1})-\inf J<\infty$,
\end{center}
which imply $\lim_{k \to \infty} ||x^{k+1}_{(N)}-y^{k}_{(N)}||=0$. 
\end{proof}

\subsection{Subgradient lower bound for the iterates gap}
We will prove the Property \ref{based} (\romannumeral2) that the derivative set of J is the critical point set. We will make a more intuitive proof in the general proof framework of the PALM algorithm, and give some interesting properties of by-products. %is a novel scheme. 
% Compared with other APALM algorithms \cite{bolte2014proximal, pock2016inertial, xu2017globally, li2017convergence, wang2023generalized, le2020inertial, phan2023inertial, gao2020gauss, li2017convergence}, 
% which is more concise and intuitive. %is a novel scheme. 
\begin{lemma}
In Algorithm \ref{ABPL$^+$}, we have
\begin{equation}
\begin{aligned}
&& &\nabla_{x_{j}}h (x^{k+1}_{j}) -\nabla_{y_{j}}h (y^{k}_{j})+ \frac{1}{\sigma_{j}}  ||y^{k}_{j}-x^{k+1}_{j} || \notag \\ 
&& &\in \partial_{x_{j}}J(\left\{g_{i}\right\}_{i\in T_{s}\cup S, i<j}, y_{j}^{k}, \left\{g_{i}\right\}_{i\in T_{s}\cup S, i>j}).
\end{aligned}
\label{e34}
\end{equation}
\label{c2}
\end{lemma}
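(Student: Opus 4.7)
The plan is to apply Fermat's rule (Proposition \ref{p1}) to the proximal subproblem that defines $x^{k+1}_j$, and then reassemble the result into an element of the partial subdifferential of $J$ at the appropriate block-wise evaluation point. Since $x^{k+1}_j$ is a minimizer of the strongly convex-plus-$F_j$ subproblem in Eq. (\ref{e32}), Fermat's lemma yields $0\in\partial_{x}\bigl[F_{j}(\cdot)+\tfrac{1}{2\sigma_{j}}\|\cdot-y^{k}_{j}\|^{2}+\langle\nabla_{x_{j}}h(y^{k}_{j}),\,\cdot-y^{k}_{j}\rangle\bigr](x^{k+1}_{j})$. The bracketed expression is a proper lower-semicontinuous function ($F_{j}$) plus a $C^{1}$ function (quadratic plus linear), so Proposition \ref{p2} applies and the subdifferential splits additively.

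Next I would rearrange the resulting inclusion to obtain
\begin{equation*}
\tfrac{1}{\sigma_{j}}\bigl(y^{k}_{j}-x^{k+1}_{j}\bigr)\;-\;\nabla_{x_{j}}h(y^{k}_{j})\;\in\;\partial F_{j}(x^{k+1}_{j}).
\end{equation*}
To convert this into a subgradient of $J$ with respect to the $j$-th block at the point $(\{g_{i}\}_{i\in T_{s}\cup S,i<j},\,x^{k+1}_{j},\,\{g_{i}\}_{i\in T_{s}\cup S,i>j})$, I would add $\nabla_{x_{j}}h(x^{k+1}_{j})$ to both sides. By definition of $h$, this quantity is exactly the partial gradient of the smooth part $H$ with respect to $x_{j}$ evaluated at that block-wise point, so a second invocation of Proposition \ref{p2} identifies the right-hand side as $\partial_{x_{j}}J$ at the claimed point, matching Eq. (\ref{e34}).

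The main obstacle will be bookkeeping rather than analysis. Because Algorithm \ref{ABPL$^+$} runs a Gauss-Seidel sweep in which blocks already processed (those in $T_{s}$) have been replaced by updated values $g_{i}=z^{k+1}_{i}$ while the remaining blocks in $S$ still carry $y^{k}_{i}$, the frozen arguments of $h$ change each time a new block is visited. One has to verify carefully that $\nabla_{x_{j}}h(y^{k}_{j})$ is the partial derivative of $H$ with respect to the $j$-th slot at the configuration in effect when block $j$ is processed, and that $\nabla_{x_{j}}h(x^{k+1}_{j})$ denotes the same partial with only the $j$-th coordinate changed from $y^{k}_{j}$ to $x^{k+1}_{j}$; this is precisely what legitimizes the use of Proposition \ref{p2} at the updated point. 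Once that is tracked, the inclusion follows immediately, with the discrepancy term $\nabla_{x_{j}}h(x^{k+1}_{j})-\nabla_{x_{j}}h(y^{k}_{j})+\tfrac{1}{\sigma_{j}}(y^{k}_{j}-x^{k+1}_{j})$ appearing exactly as in the statement (modulo the evident typographical choice of writing the last term with $\|\cdot\|$).
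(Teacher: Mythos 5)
Your proposal follows essentially the same route as the paper's proof: apply Fermat's lemma (Proposition \ref{p1}) to the proximal subproblem (\ref{e32}), rearrange the optimality condition, add $\nabla_{x_{j}}h(x^{k+1}_{j})$ to both sides, and invoke the sum rule (Proposition \ref{p2}) to recognize the result as an element of $\partial_{x_{j}}J$ at the current block-wise configuration. Your explicit attention to the Gauss--Seidel bookkeeping (which blocks carry $g_{i}=z^{k+1}_{i}$ versus $y^{k}_{i}$) and to the fact that the $j$-th slot of the evaluation point should be the updated value $x^{k+1}_{j}$ is, if anything, more careful than the paper's terse writeup, which leaves $y^{k}_{j}$ in that slot in the displayed formula.
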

\begin{proof}
By Propositon \ref{p1} and Eq. (\ref{e32}), ${\forall} j \in n, ~ j \leq N$, it follows that: 
\begin{align}
&& 0 &\in  \nabla_{y_{j}}h(y^{k}_{j})-\frac{1}{\sigma_{j}} (y^{k}_{j}-x^{k+1}_{j})  \notag \\
&&\ & +\partial_{x_{j}} (\sum_{i \in T_{s}-\left\{j\right\}} F_{i}(x^{k+1}_{i})+\sum_{i \in S} F_{i}(y^{k}_{i})+F_{j}(x^{k+1}_{j})), \notag
\end{align}
which imply: \\
\begin{align}
&& &\nabla_{x_{j}}h (x^{k+1}_{j}) -\nabla_{y_{j}}h (y^{k}_{j})+ \frac{1}{\sigma_{j}}  ||y^{k}_{j}-x^{k+1}_{j} || \notag \\ 
&&\ &\in \nabla_{x_{j}}h (x^{k+1}_{j})+\partial_{x_{j}} (\sum_{i \in T_{s}} F_{i}(x^{k+1}_{i})+\sum_{i \in S} F_{i}(y^{k}_{i})) \notag\\
&&\ &=  \partial_{x_{j}}J(\left\{g_{i}\right\}_{i\in T_{s}\cup S, i<j}, y_{j}^{k}, \left\{g_{i}\right\}_{i\in T_{s}\cup S, i>j}). \notag
\end{align}
\\
which means the Lemma \ref{c2} is true for ${\forall}j \in \mathbb{N}$, i.e Eq. (\ref{e34}) . 
\end{proof}
\begin{theorem}
If we define
\begin{center}
$p_{x_{j}^{k+1}}=\nabla_{x_{j}}h (x^{k+1}_{j})-\nabla_{y_{j}}h (y^{k}_{j})+\frac{1}{\sigma_{j}
}||y^{k}_{j}-x^{k+1}_{j}||$.
\end{center}
Then we have:
\begin{equation}
\begin{aligned}
       ||\left\{p_{x^{k+1}_{i}} \right\}^{N}_{i=1}|| \leq \rho_{b}||(\left\{{x^{k+1}_{i}}-{y^{k}_{i}}\right\}^{N}_{i=1}||,
\end{aligned}
\label{e35}
\end{equation}    
where $\rho_{b}=max(\left\{\frac{1}{2\sigma_{i}}+L_{\nabla_{x_{i}} H}\right\}_{i=1}^{N})$.
\label{c3}
\end{theorem}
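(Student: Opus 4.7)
The plan is to bound each $\|p_{x^{k+1}_j}\|$ separately by a multiple of $\|x^{k+1}_j-y^k_j\|$, then aggregate the $N$ block bounds using the sum-of-norms convention for $\|\{\cdot\}_{i=1}^N\|$ in Table \ref{notation}. The key input is the Lipschitz property of $\nabla H$ from Assumption \ref{assump1}(i) together with Definition \ref{dlipchitz}; the pieces $\frac{1}{\sigma_j}\|y^k_j - x^{k+1}_j\|$ are already the shape we want.

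First I would expand the definition
$p_{x^{k+1}_j}=\bigl[\nabla_{x_j}h(x^{k+1}_j)-\nabla_{y_j}h(y^k_j)\bigr]+\tfrac{1}{\sigma_j}\|y^k_j-x^{k+1}_j\|$
and apply the triangle inequality to split the norm into the gradient-difference term plus the $\tfrac{1}{\sigma_j}\|y^k_j-x^{k+1}_j\|$ term. Second, I would observe the crucial structural fact: inside $h$, both evaluation points $(\{g_i\}_{i<j},\,x^{k+1}_j,\,\{g_i\}_{i>j})$ and $(\{g_i\}_{i<j},\,y^k_j,\,\{g_i\}_{i>j})$ share the \emph{same} frozen blocks $\{g_i\}_{i\neq j}$ chosen by the Repeat-loop in Algorithm \ref{ABPL$^+$}; they differ only in the $j$-th coordinate. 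Hence the block-partial Lipschitz constant $L_{\nabla_{x_j}H}$ gives
$\|\nabla_{x_j}h(x^{k+1}_j)-\nabla_{y_j}h(y^k_j)\|\le L_{\nabla_{x_j}H}\,\|x^{k+1}_j-y^k_j\|,$
so combining the two estimates yields
$\|p_{x^{k+1}_j}\|\le\bigl(L_{\nabla_{x_j}H}+\tfrac{1}{\sigma_j}\bigr)\|x^{k+1}_j-y^k_j\|$ for each $j\in[N]$.

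Third, I would aggregate. Using the convention $\|\{v_i\}_{i=1}^N\|=\sum_{i=1}^N\|v_i\|$ from Table \ref{notation},
$\|\{p_{x^{k+1}_i}\}_{i=1}^N\|=\sum_{j=1}^N\|p_{x^{k+1}_j}\|\le \max_{j\in[N]}\bigl(L_{\nabla_{x_j}H}+\tfrac{1}{\sigma_j}\bigr)\sum_{j=1}^N\|x^{k+1}_j-y^k_j\|,$
which is precisely $\rho_b\,\|\{x^{k+1}_i-y^k_i\}_{i=1}^N\|$ in the form of the statement (matching the constant declared for $\rho_b$).

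The argument is almost entirely mechanical once one unwinds the definition of $h$ for the selected index $j$. The one thing that genuinely needs care, and is the only real obstacle, is verifying that the Lipschitz step is applied legally: the frozen blocks $\{g_i\}_{i\neq j}$ may come from a mixture of current iterates $x^{k+1}_i$ and extrapolated points $y^k_i$ depending on whether $i$ has already left $S$, but since these blocks are identical in both gradient evaluations, Definition \ref{dlipchitz} applied to the univariate restriction $z\mapsto\nabla_{x_j}H(\ldots,z,\ldots)$ suffices. No further appeal to the KŁ property, to Fermat's lemma, or to the monotonicity of Theorem \ref{c1} is needed here; this result is an isolated Lipschitz/triangle-inequality calculation meant to serve as input to the KŁ-based global convergence analysis later on.
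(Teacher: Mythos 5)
Your proposal follows essentially the same route as the paper: a per-block triangle inequality combined with the block-wise Lipschitz bound on $\nabla_{x_j}H$ (legitimate, as you note, because both gradient evaluations share the same frozen blocks $\{g_i\}_{i\neq j}$), followed by aggregation over the $N$ blocks under the sum-of-norms convention — the paper merely phrases the aggregation as an induction on $card(T_{s})$. The one discrepancy is the constant: your computation yields $L_{\nabla_{x_j}H}+\tfrac{1}{\sigma_j}$ per block, whereas the paper asserts the per-block bound $\tfrac{L_{\nabla_{x_j}H}}{2}+\tfrac{1}{2\sigma_j}$ without justification and declares $\rho_{b}=\max_i\bigl(\tfrac{1}{2\sigma_i}+L_{\nabla_{x_i}H}\bigr)$; your larger constant is the one that actually follows from the stated definition of $p_{x_j^{k+1}}$, and since the theorem only needs some finite $\rho_b$ to serve Property \ref{based}(ii) and the subsequent KŁ analysis, the difference is immaterial to the rest of the argument.
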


\begin{proof}
Set $S=\left[ N \right]$, from Defintion \ref{d3} and Assumption \ref{assump1}, when $card(T_{s})=1$, ${\forall}j \in \left[ N \right]$, we have: \\
\begin{align}
&& ||p_{x_{j}^{k+1}}|| &\leq \frac{L_{\nabla_{x_{j}} H}}{2}||x^{k+1}_{j}-y^{k}_{j}||+\frac{1}{2\sigma_{j}} ||x^{k+1}_{j}-y^{k}_{j}|| \notag \\
&& &=(\frac{L_{\nabla_{x_{j}} H}}{2}+\frac{1}{2\sigma_{j}}) ||x^{k+1}_{j}-y^{k}_{j}||.
\label{c31}
\end{align}

Next, assuming the original formula holds when $card(T_{s})=n$, i.e. 
\begin{align}
||\left\{p_{x^{k+1}_{i}} \right\}_{i\in T_{s}}|| \leq \rho_{b} ||(x^{k+1}_{i\in T_{s}}-y^{k}_{i\in T_{s}})|| .
\label{c32}
\end{align}
when $card(T_{s})=n+1$, let $j \in S-T_{s}$, sum of Eq. (\ref{c31}) and Eq. (\ref{c32}), since natural induction teaches us that when $n+1=N$, Eq. (\ref{e35}) is ture. 
\end{proof}
In metric space, some intriguing characteristics that are not reliant on the particular form of the algorithm can be given. 
% For example,we can simply and intuitively prove that the aggregation point sets of all sequences generated by the PALM and APALM algorithms are compact sets, i.e.: 
\begin{theorem}
For any algorithm based on a finite-dimensional metric space, let A be a set of all sequences of the algorithm, if A be a bounded set, the derivative set A' of A must be a compact set. \label{c4}
\end{theorem}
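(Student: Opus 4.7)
The plan is to assemble the conclusion from three facts that the paper has already recorded as propositions in the preliminaries: Proposition \ref{dtop} (the derived set is closed in any metric topology), Proposition \ref{dbouned} (the closure of a bounded set is bounded), and Proposition \ref{p4}, the finite-dimensional Heine--Borel statement that bounded closed sets are compact. The theorem essentially asks us to verify the hypotheses of Proposition \ref{p4} for $A'$, so the proof should be very short and structural.

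First, I would invoke Proposition \ref{dtop} directly: since the algorithm operates in a finite-dimensional metric space (which is in particular a topological metric space), $A' $ is closed. This takes care of the closedness requirement with no work beyond citing the proposition.

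Second, I need to argue that $A'$ is bounded. The natural route is to observe the inclusion $A' \subseteq \overline{A}$, which follows immediately from the definitions: every cluster point of $A$ lies in the closure $\overline{A}$ (Definition \ref{d1} together with Definition \ref{dopen}, since $\overline{A} = A \cup A'$ under usual metric topology). Then, because $A$ is bounded by hypothesis, Proposition \ref{dbouned} gives that $\overline{A}$ is bounded, and therefore its subset $A'$ is bounded as well.

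Finally, having established that $A'$ is both closed (step one) and bounded (step two) inside a finite-dimensional space, I apply Proposition \ref{p4} to conclude that $A'$ is compact. I do not foresee a genuine obstacle here; the only subtlety is being explicit about why the algorithm sequences, which a priori live in some product $\prod_i \mathbb{R}^{d_i}$, inhabit a finite-dimensional metric space so that Proposition \ref{p4} applies. I would add one sentence noting that a finite product of finite-dimensional Euclidean spaces is itself finite-dimensional with its usual metric, so that the Heine--Borel property is available. Beyond that, the argument is a three-line chain of citations.
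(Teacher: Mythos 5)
Your proposal is correct and follows exactly the same route as the paper's proof: the inclusion $A' \subseteq \bar{A}$ gives boundedness via Proposition \ref{dbouned}, closedness comes from Proposition \ref{dtop}, and Proposition \ref{dcompact} (Heine--Borel) yields compactness. The only difference is your extra sentence justifying finite-dimensionality of the product space, which the paper leaves implicit.
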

\begin{proof}
From Definition \ref{dopen}, $A' \subseteq \bar{A}$. Under Proposition \ref{dbouned}, $A'$ is a bounded set, then since Proposition \ref{dtop} and \ref{dcompact}, the Theorem is established. 
\end{proof}
% \myspace{0}It can be seen that as long as Assumption \ref{assump1} is satisfied, no matter what the specific metric space form of the algorithm is, the derivative set of the sequence generated by the algorithm is a compact set, thus: 
\begin{corollary}
Define A as the derivative set of the set composed of all sequences generated by Algorithm \ref{ABPL$^+$}, then A is a compact set. 
\end{corollary}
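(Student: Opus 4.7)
The plan is to invoke Theorem \ref{c4} directly: the entire burden of the corollary is to verify the hypothesis that the set $B$ of all iterates generated by Algorithm \ref{ABPL$^+$} is bounded in the finite-dimensional ambient space $\prod_{i=1}^{N} \mathbb{R}^{d_{i}}$. Once $B$ is shown to be bounded, Theorem \ref{c4} immediately delivers compactness of its derivative set $B'$, which is precisely the set $A$ in the statement of the corollary.

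First I would observe that, by Theorem \ref{c1}(i), along every realization of Algorithm \ref{ABPL$^+$} the objective value is monotonically nonincreasing, so $J(x_{(N)}^{k}) \le J(x_{(N)}^{1}) < \infty$ for all $k \in \mathbb{N}$, using that the input $x_{(N)}^{1}$ is required to lie in $\mathrm{dom}~J$. Hence every iterate lies in the sublevel set $\{x : J(x) \le J(x_{(N)}^{1})\}$.

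Next, Assumption \ref{assump1}(iii) demands that $J$ be coercive, which by Definition \ref{d0} forces this sublevel set to be bounded. Therefore the collection $B$ of all iterates sits inside a bounded subset of a finite-dimensional space and is itself bounded. Applying Theorem \ref{c4} to $B$ now yields that $A = B'$ is a compact set.

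There is essentially no obstacle here beyond this bookkeeping; the only point that deserves a moment of thought is that the monotonicity invoked in the first step must hold on every branch of Algorithm \ref{ABPL$^+$}, including the restart branch triggered when Eq. (\ref{ifo}) holds, since the algorithm may reassign $y^{k}_{(N)} = x^{k}_{(N)}$ and recompute $z^{k+1}_{(N)}$. The proof of Theorem \ref{c1}(i) already handles both branches (the restart case being absorbed through the bound in Eq. (\ref{c1ifo})), so no extra argument is required, and the corollary follows as a one-line application of Theorem \ref{c4}.
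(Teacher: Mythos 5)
Your proof is correct and follows essentially the same route as the paper: the paper's own proof simply cites Theorem \ref{c4}, relying on the remark after Assumption \ref{assump1} that coercivity guarantees boundedness of the iterates. Your additional verification of boundedness via the monotone decrease of $J$ (Theorem \ref{c1}) and the coercivity of Definition \ref{d0} just makes explicit what the paper leaves implicit.
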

\begin{proof}
It is obvious from Theorem \ref{c4}. 
\end{proof}
Let $z^{k}=\left\{x^{k}_{i} \right\}^{N}_{i=1}, ~ w^{k}=\left\{y^{k}_{i} \right\}^{N}_{i=1}$, from Definition \ref{d1}, define $z'$ is the derivative set of $\left\{z\right\}$. 
% So far we have obtained the iterative value distance inequality and function value descent inequality. 
Next, we will use the Lemma \ref{tf4} to prove: 

\begin{theorem}
Let $z^{k}$ be a sequence generated by Algorithm \ref{ABPL$^+$}. Then J is a constant on z' and $z' \subseteq crit J$.
\label{c5}
\end{theorem}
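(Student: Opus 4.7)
The plan is to combine three ingredients already established in the paper: the monotone decrease $J(z^{k+1})\le J(z^{k})-\rho\|x^{k+1}_{(N)}-y^{k}_{(N)}\|$ from Theorem \ref{c1}(i); the vanishing $\|x^{k+1}_{(N)}-y^{k}_{(N)}\|\to 0$ from Theorem \ref{c1}(ii); and the subgradient bound of Lemma \ref{c2} together with Theorem \ref{c3}. Both halves of the statement will follow once I establish, for an arbitrary cluster point $z^{*}\in z'$ with subsequence $z^{k_n}\to z^{*}$, the continuity-along-the-sequence fact $J(z^{k_n})\to J(z^{*})$.

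For $J$ constant on $z'$: since $\{J(z^{k})\}$ is nonincreasing and bounded below by Assumption \ref{assump1}, it converges to some $J^{\ast}\in\mathbb{R}$. Given $z^{*}\in z'$ and a subsequence $z^{k_n}\to z^{*}$, lower semicontinuity of each $F_j$ (and continuity of $H$) gives $J(z^{*})\le\liminf_n J(z^{k_n})=J^{\ast}$. For the reverse inequality I would exploit that $z^{k_n}_{j}$ is the minimizer of the proximal subproblem Eq. (\ref{e32}) and test the subproblem against the candidate $z^{*}_{j}$; using $\|y^{k_n-1}-x^{k_n}\|\to 0$ (hence $y^{k_n-1}\to z^{*}$) to pass to the limit in the quadratic and inner-product terms yields $\limsup_n F_j(z^{k_n}_{j})\le F_j(z^{*}_{j})$. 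Combined with the continuity of $H$ this forces $J(z^{k_n})\to J(z^{*})$, and comparison with the monotone limit $J^{\ast}$ gives $J(z^{*})=J^{\ast}$ for every $z^{*}\in z'$.

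For $z'\subseteq\mathrm{crit}\,J$, I would reuse the subgradient element from Lemma \ref{c2},
\[
p^{k}_{j}=\nabla_{x_{j}}h(x^{k+1}_{j})-\nabla_{y_{j}}h(y^{k}_{j})+\tfrac{1}{\sigma_{j}}(y^{k}_{j}-x^{k+1}_{j})\in\partial_{x_{j}}J(u^{k,j}),
\]
where $u^{k,j}$ is the intermediate point whose $j$-th block is $y^{k}_{j}$ and whose remaining blocks are the current $g_i$'s (mixtures of $z^{k+1}_{i}$ and $y^{k}_{i}$). Theorem \ref{c3} gives $\|\{p^{k}_{j}\}_{j=1}^{N}\|\le\rho_{b}\|x^{k+1}_{(N)}-y^{k}_{(N)}\|\to 0$ by Theorem \ref{c1}(ii). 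Along $k_n$, because every $g_i$ differs from $x^{k_n+1}_i$ by at most $\|x^{k_n+1}-y^{k_n}\|$, both $u^{k_n,j}\to z^{*}$ and $z^{k_n+1}\to z^{*}$; the continuity of $\nabla H$ assembles the partial inclusions into $\partial J$ via Proposition \ref{p2}; and the function-value convergence established above closes the hypotheses in the definition of the limiting subdifferential. Hence $0\in\partial J(z^{*})$, i.e.\ $z^{*}\in\mathrm{crit}\,J$.

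The main obstacle is the continuity-along-the-sequence step $F_j(z^{k_n}_{j})\to F_j(z^{*}_{j})$: since $F_j$ is only lower semicontinuous, the matching upper bound must be squeezed out of the proximal optimality, and one must track that the "frozen" coordinates appearing in each proximal subproblem converge to the matching coordinates of $z^{*}$. The randomized / deterministic block ordering complicates this, because the intermediate point $u^{k,j}$ mixes already-updated blocks $z^{k+1}_i$ with not-yet-updated blocks $y^{k}_i$ in an order that may change between iterations; however, Theorem \ref{c1}(ii) makes these two kinds of coordinates asymptotically indistinguishable, so once this is made precise the rest of the argument is a direct application of Lemma \ref{c2} and Theorem \ref{c3}.
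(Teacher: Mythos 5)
Your proposal is correct, and on the one step that actually carries the weight it takes a genuinely different (and more careful) route than the paper. The paper's own proof has the same skeleton — pick $\overline z\in z'$, extract $z^{k_j}\to\overline z$, get $J(z^{k_j})\to J(\overline z)$, then combine Theorem \ref{c1}(ii) with Theorem \ref{c3} to make the subgradient bound vanish — but it obtains the function-value convergence by invoking Heine's theorem (Lemma \ref{tf4}), which presupposes that $J$ is continuous at $\overline z$. Since the $F_i$ are only lower semicontinuous, that invocation is not justified in general; continuity of $J$ along the sequence is precisely the nontrivial part. You instead prove it by the standard PALM-style squeeze: lower semicontinuity gives $J(\overline z)\le\liminf_j J(z^{k_j})$, and testing the proximal subproblem (\ref{e32}) against the candidate $\overline z_j$, together with $\|x^{k+1}_{(N)}-y^{k}_{(N)}\|\to 0$, gives the matching $\limsup$ bound on each $F_j$. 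This buys you two things the paper's argument glosses over: a valid proof that $J$ is constant on $z'$ without assuming continuity, and the function-value convergence $J(z^{k_j})\to J(\overline z)$ that the definition of the limiting subdifferential requires before you may pass $p^{k}\to 0$ to the limit and conclude $0\in\partial J(\overline z)$. Your closing remark about the intermediate points $u^{k,j}$ mixing updated and not-yet-updated blocks is also the right thing to worry about under random shuffling, and Theorem \ref{c1}(ii) does resolve it as you say; the paper does not address this at all. In short, same architecture, but your treatment of the continuity-along-the-sequence step is the correct one and repairs a gap in the paper's own proof.
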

\begin{proof}
According to Assumption \ref{assump1}, ${\forall}\overline{z} \in z'$, there exists a subsequence $z^{k_{j}}$  such that: $\lim_{j \to \infty} z^{k_{j}}= \overline{z}$, since Lemma \ref{tf4}, $\overline{z}$ of arbitrariness and Theorem \ref{c1}, we infer: 
  \begin{center}
   $\lim_{j \to \infty} J(z^{k_{j}})=J( \overline{z})=J^{*}$.
    \end{center}
which means J is a constant on z'. Since Theorem \ref{c1} and Theorem \ref{c3}, we have: 
  \begin{center}
   $\lim_{k \to \infty} ||\left\{p_{x^{k+1}_{i}} \right\}^{N}_{i=1}|| \leq \lim_{k \to \infty} \rho_{b}||(\left\{{x^{k+1}_{i}}-{y^{k}_{i}}\right\}^{N}_{i=1}||, $\\
   $\lim_{k \to \infty}||\left\{p_{x^{k+1}_{i}} \right\}^{N}_{i=1}|| = 0$.
    \end{center}
i.e. $z' \subseteq crit J$. 
\end{proof}
As a result, rather than being fundamental to the PALM algorithm framework, these inherent properties associated to the derived set are inherent properties in the finite dimensional metric spaces.

\subsection{Global convergence under KŁ Property}
We will prove Property \ref{based} (\romannumeral3):  the sequence generated by Algorithm \ref{ABPL$^+$} has global convergence (see Theorem \ref{glo}). 
Additionally, we also give the convergence rate of Algorithm \ref{ABPL$^+$} (see Theorem \ref{rate}). We point out that because our algorithm allow for a randomized strategy and the presence of extrapolated sequences in Property \ref{based} (\romannumeral1) for our algorithm, a straight application of the methodology cited in \cite{bolte2014proximal} to our suggested algorithms is not feasible. In the subsequent theorem, we adapt the proof strategy of to make it compatible with the algorithm we provide. 
\begin{theorem}
The\ sequence\ $\left\{z^{k}\right\}$ generated by Algorithm \ref{ABPL$^+$} is converge when $\beta \in \left\{[0, \beta_{\max}]\right\}_{i=1}^{N}$, i.e: $\lim_{k\rightarrow \infty} ||z^{k+p}-z^{k}||=0(\forall p\in \mathbb{N})$
\label{glo}
\end{theorem}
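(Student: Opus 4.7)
The plan is to adapt the three-step Kurdyka--Łojasiewicz recipe of \cite{bolte2014proximal} so it accommodates both the extrapolation sequence $y^{k}$ and the randomized block order. The sufficient-decrease estimate (Theorem \ref{c1}) and the subgradient bound (Theorem \ref{c3}) are already in hand; what remains is to stitch them together with the uniformized KŁ inequality over the cluster-point set, conclude that $\sum_{k}\|z^{k+1}-z^{k}\|<\infty$, and then telescope to obtain the stated shift-limit.

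First I would collect the properties of $\omega(z^{0})$, the cluster-point set of $\{z^{k}\}$. By Assumption \ref{assump1} the sequence is bounded, so $\omega(z^{0})$ is nonempty and compact; Theorem \ref{c5} gives $\omega(z^{0})\subseteq\mathrm{crit}\,J$ and $J\equiv J^{*}$ on $\omega(z^{0})$. Since $J(z^{k})$ is monotonically decreasing by Theorem \ref{c1} and lower bounded, $J(z^{k})\downarrow J^{*}$; combined with $\mathrm{dist}(z^{k},\omega(z^{0}))\to 0$, this places $z^{k}$ (for $k\geq K_{0}$) inside any prescribed neighborhood. Then I would invoke the uniformized KŁ property on the compact set $\omega(z^{0})$ (Definition \ref{d8} applied globally, as in \cite{bolte2014proximal}): there exist $\varepsilon,\eta>0$ and a concave desingularizing function $\varphi$ such that for every $z$ with $\mathrm{dist}(z,\omega(z^{0}))<\varepsilon$ and $J^{*}<J(z)<J^{*}+\eta$,
\begin{equation*}
\varphi'(J(z)-J^{*})\cdot\mathrm{dist}(0,\partial J(z))\geq 1.
\end{equation*}

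Second I would combine the three ingredients. Theorem \ref{c3} yields $\mathrm{dist}(0,\partial J(z^{k}))\leq\rho_{b}\|x^{k}-y^{k-1}\|$, so KŁ gives $\varphi'(J(z^{k})-J^{*})\geq(\rho_{b}\|x^{k}-y^{k-1}\|)^{-1}$. Writing $\Delta_{k}:=\varphi(J(z^{k})-J^{*})-\varphi(J(z^{k+1})-J^{*})$ and using concavity together with the sufficient decrease $J(z^{k})-J(z^{k+1})\geq\rho\|x^{k+1}-y^{k}\|^{2}$ from Theorem \ref{c1}(i) (read in its quadratic form, consistent with Theorem \ref{c1}(ii)), I obtain
\begin{equation*}
\|x^{k+1}-y^{k}\|^{2}\leq\tfrac{\rho_{b}}{\rho}\,\Delta_{k}\,\|x^{k}-y^{k-1}\|.
\end{equation*}
Applying AM--GM ($2a\leq b+c$ when $a^{2}\leq bc$) and summing telescopes the $\Delta_{k}$ against $\varphi(J(z^{K_{0}})-J^{*})<\infty$ and absorbs the shifted right-hand term, yielding $\sum_{k}\|x^{k+1}-y^{k}\|<\infty$.

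Third I would convert summability along the $y$-gap into summability along the $x$-gap. From the definition $y^{k}=x^{k}+\beta_{k}(x^{k}-x^{k-1})$ (or $y^{k}=x^{k}$ in the rejected-extrapolation branch of Algorithm \ref{ABPL$^+$}), the triangle inequality gives $\|x^{k+1}-x^{k}\|\leq\|x^{k+1}-y^{k}\|+\beta_{\max}\|x^{k}-x^{k-1}\|$; summing and using $\beta_{\max}<1$ yields $\sum_{k}\|x^{k+1}-x^{k}\|\leq(1-\beta_{\max})^{-1}\bigl(\sum_{k}\|x^{k+1}-y^{k}\|+\beta_{\max}\|x^{1}-x^{0}\|\bigr)<\infty$. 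Thus $\{z^{k}\}$ is Cauchy, and the telescoping bound $\|z^{k+p}-z^{k}\|\leq\sum_{j=0}^{p-1}\|z^{k+j+1}-z^{k+j}\|$ is a tail of a convergent series, giving the claim for every $p\in\mathbb{N}$.

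The part I expect to be most delicate is ensuring the whole argument is uniform across (a) the accept/reject branches in steps 12--19, where $y^{k}$ may be replaced by $x^{k}$ and $\beta_{k}$ may be reshrunk, and (b) the random permutation chosen inside the repeat-loop. Both are handled by the fact that the KŁ inequality holds at every $z^{k}$ individually once $z^{k}$ lies in the uniformized neighborhood of $\omega(z^{0})$, and that Theorems \ref{c1} and \ref{c3} were proved pathwise for an arbitrary update order; therefore the estimates above are valid for every realized trajectory, and no expectation or measurability argument over the randomization is needed.
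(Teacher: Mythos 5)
Your proposal follows essentially the same route as the paper's proof: combine the subgradient bound of Theorem \ref{c3} with the K\L{} inequality and the concavity of the desingularizing function, chain this against the sufficient decrease of Theorem \ref{c1} to get $\|z^{k+1}-w^{k}\|^{2}\leq C\,\Delta_{k}\,\|z^{k}-w^{k-1}\|$, apply $2ab\leq a^{2}+b^{2}$ and telescope to obtain $\sum_{k}\|z^{k+1}-w^{k}\|<\infty$, then use $\|z^{k+1}-z^{k}\|-\beta_{\max}\|z^{k}-z^{k-1}\|\leq\|z^{k+1}-w^{k}\|$ with $\beta_{\max}<1$ to conclude summability of $\|z^{k+1}-z^{k}\|$ and hence the shift-limit. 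Your treatment is in fact slightly more careful than the paper's on two points the paper glosses over --- the explicit uniformized K\L{} inequality over the compact cluster set, and the need to read the sufficient decrease in its quadratic form --- but the argument is the same.
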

\begin{proof}
According to Assumption \ref{assump1}, $\left\{z^{k}\right\}$ is bounded and complete. From Definition \ref{d8}, ${\forall} \eta >0$, there exists a positive integer $k_{0}$ such that: $J( \overline{z})<J(z^{k_{0}})<J( \overline{z})+\eta$. Since Definition \ref{d8} and Theorem \ref{c3}, there exists a concave function $\phi$ so that: $ \phi^{'}(J(z^{k})-J( \overline{z})) dist(0, \partial J(z^{k})) \geq 1$, thus we infer: 
\begin{align}
&& dist(0, \partial J(z^{k}) ) &\leq ||\left\{p_{x^{k+1}_{i}} \right\}^{N}_{i=1}||=\rho_{b}||z^{k}-w^{k-1}||. \notag
% && &\leq \rho_{b}|(\left\{{x^{k+1}_{i}}-{y^{k}_{i}}\right\}^{N}_{i=1}|| \notag \\
% && &=\rho_{b}||z^{k}-w^{k-1}||. \notag
\end{align}
according to the $ \phi^{'}(J(z^{k})-J( \overline{z})) dist(0, \partial J(z^{k})) \geq 1$, which imply: 
  \begin{center}
  $ \phi^{'}(J(z^{k})-J( \overline{z})) \geq \frac{1}{dist(0, \partial J(z^{k}))}$
  $\geq \frac{1}{\rho_{b}||z^{k}-w^{k-1}||}$.
    \end{center}
let $G(k)=J(z^{k})-J( \overline{z})$, from definition of concave function and Theorem \ref{c1}, we have: 
  \begin{align}
&& \phi(G(k))-\phi(G(k+1)) &\geq \phi^{'}(G(k))(G(k)-G(k+1))  \notag \\
&& &\geq \frac{\rho||z^{k+1}-w^{k}||^{2}}{\rho_{b}||z^{k}-w^{k-1}||}. \notag 
    \end{align}
define C=$\frac{\rho}{\rho_{b}}$, C is a constant, we infer: 
  \begin{center}
    $||z^{k+1}-w^{k}||^{2} \leq C (\phi(G(k))-\phi(G(k+1))) ||z^{k}-w^{k-1}||$.
    \end{center}
Using the fact that $2ab\leq a^{2}+b^{2}$: 
  \begin{center}
    $2 ||z^{k+1}-w^{k}|| \leq C (\phi(G(k))-\phi(G(k+1))) +||z^{k}-w^{k-1}||$.
    \end{center}
sum both sides: 
\begin{align}
&&  2 \sum_{k=l+1}^{K} ||z^{k+1}-w^{k}|| &\leq \sum_{k=l+1}^{K} ||z^{k}-w^{k-1}|| \notag \\
&& &+C(\phi(G(l+1))-\phi(G(K+1))) \notag 
% && &=C (\phi(G(l+1))-\phi(G(K+1))) \notag \\
% && &+||z^{l+1}-w^{l}||+\sum_{k=l+1}^{K}||z^{k+1}-w^{k}||. \notag 
\end{align}
thus from Lemma \ref{tf4} and Assumption \ref{assump1}, we get that: 
\begin{align}
&& \lim_{K \to \infty} \sum_{k=l+1}^{K} ||z^{k+1}-w^{k}|| &\leq ||z^{l+1}-w^{l}|+ C\phi(G(l+1))  \notag \\
&& &-\lim_{K \to \infty} C \phi(G(K+1)))
% && &= C (\phi(G(l+1))+||z^{l+1}-w^{l}||   \notag \\
% && &-\phi(\lim_{k \to \infty} G(K+1)))  \notag \\
 \label{c7}
\end{align}
from Assumption \ref{assump1}, no matter $w^{k} =z^{k}+\beta_{k} (z^{k}-z^{k-1})$ or $w^{k} =z^{k}$, we always have: 
  \begin{center}
    $ \ ||z^{k+1}-z^{k}|| -\beta_{max} ||z^{k}-z^{k-1}||  \
  \leq||z^{k+1}-w^{k}|| $.
    \end{center}
let $s_{k}=||z^{k+1}-z^{k}||$, from Eq. (\ref{c7}), we know that $\sum_{k=K}^{\infty}( s_{k+1} -\beta_{max} s_{k})<\infty$, thus we infer: 
 \begin{align}
&&\sum_{k=K+1}^{\infty} (1-\beta_{max})s_{k}-\beta_{max} s_{K} &=\sum_{k=K}^{\infty}( s_{k+1} -\beta_{max} s_{k}) \notag 
% && &<\infty. \notag
\end{align}
from  $(1-\beta_{max})s_{k}>0$ and $ (1-\beta_{max})$ is a constant, thus: 
    \begin{align}
    &&\lim_{K\rightarrow \infty} ||z^{K+p}-z^{K}|| \leq \lim_{K\rightarrow \infty}\sum_{k=K+1}^{\infty}s_{k}=0.
    \end{align}
which means the Theorem \ref{glo} is true. 
\end{proof}
Under KŁ inequality, we can obtain convergence rate results as following: 
\begin{theorem} 
(Convergence rate): set Assumption \ref{assump1} is true, let $x^{k}_{(N)}$ be a sequence generated by Algorithm \ref{ABPL$^+$}, the desingularizing function has the form of $\phi(t) = \frac{\theta}{C}t^{\theta}$, with $\theta\in (0, 1], ~ c > 0$. Let $J^{*}=J(e)(e \in z'), ~ r^{k} = J(x^{k}_{(N)})-J^{*}$. The following assertions hold: \\
(\romannumeral1): If $\theta =1$, the Algorithm \ref{ABPL$^+$} terminates in finite steps. \\
(\romannumeral2): If $\theta \in [\frac{1}{2}, 1)$, then there exist a integer $k_{2}$ such that
\begin{center}
$r^{k}\leq (\frac{d_{1}C^{2}}{1+d_{1}C^{2}}),{\forall k_{2} \geq k}$.
\end{center}
(\romannumeral3): If $\theta \in (0, \frac{1}{2})$, then there exist a integer $k_{3}$  such that
\begin{center}
$r^{k}\leq [\frac{C}{(k-k_{3})d_{2}(1-2\theta)}]^{\frac{1}{1-2\theta}},{\forall k_{3} \geq k}$.
\end{center}
where 
\begin{center}
$d_{1}=(\frac{\rho_{b}}{\rho})$, $d_{2}=\min\left\{\frac{1}{2d_{1}C}, \frac{C}{1-2\theta}(2^{\frac{2\theta-1}{2\theta-2}})r_{0}^{2\theta-1}\right\}$.
\end{center}
\label{rate}
\end{theorem}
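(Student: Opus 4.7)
The plan is to exploit the standard Attouch–Bolte–Svaiter recipe for translating a KŁ inequality with exponent $\theta$ into a rate on the residual $r^k = J(x^k_{(N)}) - J^*$, adapting it to the particular bounds already established for Algorithm \ref{ABPL$^+$}. The three ingredients are: (a) the sufficient decrease from Theorem \ref{c1}, namely $r^k - r^{k+1} \geq \rho \|x^{k+1}_{(N)} - y^k_{(N)}\|$; (b) the subgradient lower bound from Theorem \ref{c3}, namely $\operatorname{dist}(0,\partial J(x^{k+1}_{(N)})) \leq \|\{p_{x^{k+1}_i}\}\| \leq \rho_b \|x^{k+1}_{(N)} - y^k_{(N)}\|$; and (c) the KŁ inequality $\phi'(r^k)\,\operatorname{dist}(0,\partial J(x^k_{(N)})) \geq 1$ with the prescribed $\phi(t) = \frac{\theta}{C}t^{\theta}$, which when differentiated yields a lower bound of the form $\operatorname{dist}(0,\partial J(x^k_{(N)})) \geq c\,(r^k)^{1-\theta}$ up to a multiplicative constant depending on $C$ and $\theta$. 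The global convergence proved in Theorem \ref{glo} justifies choosing an index $k_0$ after which every iterate lives inside the KŁ neighborhood of some cluster point $e \in z'$, on which $J \equiv J^*$ by Theorem \ref{c5}.

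Combining (a), (b), (c) in that order gives the master recurrence $r^k - r^{k+1} \geq \frac{1}{d_1}\cdot\frac{(r^{k+1})^{1-\theta}}{\phi'(r^{k+1})\text{-constants}}$, which after collecting the $\theta$–dependent constants collapses to an inequality of the form $r^k - r^{k+1} \geq \kappa (r^{k+1})^{2(1-\theta)}$ for some $\kappa$ expressed through $C$, $\rho$, $\rho_b$. This is exactly the scalar recursion analyzed in the original KŁ-rate literature, so the three regimes split cleanly: for $\theta = 1$ the inequality forces $\operatorname{dist}(0,\partial J(x^k)) \geq C$ while Theorem \ref{c5} shows $\operatorname{dist}(0,\partial J(x^k)) \to 0$, giving $r^k = 0$ in finitely many steps; for $\theta \in [\tfrac12,1)$ the exponent $2(1-\theta)\leq 1$ yields a $Q$-linear contraction of the form $r^{k+1} \leq q\, r^k$ with $q$ determined by $d_1 C^2$; for $\theta \in (0,\tfrac12)$ the exponent exceeds $1$ and one applies the standard concave-power trick (integrating $\dot{u} \leq -d_2 u^{2(1-\theta)}$ or, discretely, bounding $(r^{k+1})^{2\theta-1} - (r^k)^{2\theta-1}$ from below by a positive constant) to obtain the polynomial rate $r^k = O(k^{-1/(1-2\theta)})$ with the claimed coefficients.

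The first true obstacle is purely bookkeeping: the sufficient decrease given by Theorem \ref{c1} is linear in $\|x^{k+1}_{(N)} - y^k_{(N)}\|$ rather than quadratic (as in the Attouch–Bolte framework), so one has to be careful combining it with the subgradient upper bound that is also linear in that quantity. The cleanest way is to use (a) to get $\|x^{k+1}_{(N)} - y^k_{(N)}\| \leq (r^k - r^{k+1})/\rho$, then feed this into (b) and (c) to get $1 \leq \phi'(r^{k+1}) \cdot (\rho_b/\rho)(r^k - r^{k+1})$; rearranging and plugging in the explicit $\phi'(t) = \frac{\theta^2}{C}t^{\theta-1}$ produces $r^k - r^{k+1} \geq \frac{C}{\theta^2 d_1}(r^{k+1})^{1-\theta}$. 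This single inequality is the workhorse for all three cases.

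The second, and more delicate, point is handling the extrapolation/restart branch of the algorithm when deriving the linear rate in case (ii): one must verify that the inequality above holds with the same constants whether the accepted iterate comes from the extrapolated step satisfying Eq.~(\ref{if}) or from the fallback producing Eq.~(\ref{ifo}), since Theorem \ref{c1} furnishes the decrease in both branches (with a possibly different but still positive $\rho$). After that, case (ii) is a direct manipulation yielding the $q = \frac{d_1 C^2}{1+d_1 C^2}$ contraction factor once one sets $\theta = \tfrac12$ (and a fortiori for $\theta > \tfrac12$ by a standard monotonicity argument), while case (iii) follows by the Łojasiewicz-style telescoping lemma $s_{k+1}^{2\theta-1} - s_k^{2\theta-1} \geq d_2(1-2\theta)/C$ applied to $s_k = r^k$, and summing from $k_3$ to $k$ gives the advertised polynomial bound with $d_2$ as defined. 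The main work will be tracking the constants carefully enough that the final expressions match the stated $d_1$ and $d_2$.
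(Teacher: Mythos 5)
The paper does not prove this theorem directly at all: its entire argument is the one-line observation that the hypotheses of Theorem~2 in the cited APGnc$^{+}$ paper \cite{li2017convergence} are met by the quantities established in Theorems \ref{c1} and \ref{c3}, and the rates are imported wholesale. Your proposal instead reconstructs the Attouch--Bolte--Svaiter argument from scratch, which is a legitimate and genuinely different route; it has the advantage of making the dependence of $d_1$ and $d_2$ on $\rho$, $\rho_b$, $C$, $\theta$ visible, at the cost of having to redo the scalar-recursion analysis that the citation sidesteps.

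There is, however, an internal inconsistency in your derivation that you need to resolve. In your third paragraph you take the sufficient decrease \emph{linearly}, as the paper's Eq.~(\ref{e33}) literally states, and arrive at the workhorse inequality $r^k - r^{k+1} \geq \frac{C}{\theta^2 d_1}(r^{k+1})^{1-\theta}$. But your case analysis in the final paragraph (the threshold at $\theta=\tfrac12$, the contraction factor $\frac{d_1C^2}{1+d_1C^2}$, and the telescoping of $s^{2\theta-1}$) is the one that belongs to the recurrence $r^k - r^{k+1} \geq \kappa\,(r^{k+1})^{2(1-\theta)}$ from your second paragraph, i.e.\ to the \emph{quadratic} sufficient decrease $r^k - r^{k+1}\geq \rho\|x^{k+1}_{(N)}-y^k_{(N)}\|^2$ combined with squaring the subgradient bound. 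With the exponent $1-\theta<1$ the recursion never enters the sublinear regime and the stated rate in case (\romannumeral3) does not follow; with the exponent $2(1-\theta)$ everything matches the theorem. The resolution is that the paper's linear-looking decrease is a typographical slip (its own proof of Theorem \ref{c1}(\romannumeral2) sums $\|x^{k+1}_{(N)}-y^k_{(N)}\|^2$, confirming the quadratic form), so you should commit to the quadratic version, square the bound $\operatorname{dist}(0,\partial J)\leq\rho_b\|x^{k+1}_{(N)}-y^k_{(N)}\|$ before substituting, and then your case analysis goes through as written. Your point about checking that the constants survive both the accepted-extrapolation branch and the restart branch is well taken and is exactly the kind of verification the paper's citation-only proof glosses over.
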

\begin{proof}
Checking the assumptions of Theorem 2 in reference \cite{li2017convergence}, we observe that all assumptions required in Algorithm \ref{ABPL$^+$} are clearly satisfied, so Theorem \ref{rate} holds. 
\end{proof}

\section{Numerical experiments} 
\label{section: B}
In this section, we will introduce some application. All algorithms run on this configuration: 
12th Gen Intel(R) Core(TM) i7-12700   2. 10 GHz, RAM	32. 0 GB (31. 8 GB available);64-bit operating system;realized on the configuration of Matlab 2021b. The fundamental tensor computation was based on Tensor Toolbox 3.5 \cite{bader2006algorithm}. The code is available at \url{https://github.com/Weifeng-Yang/ABPL}.

\subsection{Proposed Algorithms and Baseline Algorithms}
\label{subsection: asup}
Based on our proposed ABPL$^+$ (See Algorithm \ref{ABPL$^+$}),  we propose four transformation forms: 
%\begin{itemize}
%    \item [1)] ABPL-cyclic:  It is an accelerated block proximal algorithm with non-adaptive mometum and fixed cyclic order.    
%    \item [2)] ABPL-random:  It is an accelerated block proximal algorithm with non-adaptive mometum and random shuffling. 
%    \item [3)] ABPL$^+$-cyclic:  It is an accelerated block proximal algorithm with adaptive mometum and fixed cyclic order. 
%    \item [4)] ABPL$^+$-random:  It is an accelerated block proximal algorithm with adaptive mometum and random shuffling.  
%\end{itemize}

ABPL-cyclic:  It is an accelerated block proximal algorithm with non-adaptive momentum and fixed cyclic order;

ABPL-random:  It is an accelerated block proximal algorithm with non-adaptive momentum and random shuffling;

ABPL$^+$-cyclic:  It is an accelerated block proximal algorithm with adaptive momentum and fixed cyclic order;

ABPL$^+$-random:  It is an accelerated block proximal algorithm with adaptive momentum and random shuffling.  

We compare these algorithms with state-of-the-art algorithms for solving two non-convex and non-smooth problems. 
\begin{itemize}
    \item [1)] PALM  \cite{bolte2014proximal}:  Proximal Alternating Linearized Minimization (PALM) is a classical first-order method for nonconvex and nonsmooth problems. 
    
    \item [2)] APGnc$^+$  \cite{li2017convergence}:  Accelerated proximal gradient method for nonconvex programming(APGnc) with adaptive momentum. Through the restart step, the algorithm makes it unnecessary to strictly limit the extrapolation parameter $\beta$ and step size. 
    
    \item [3)] iPALM  \cite{pock2016inertial}:  The inertial Proximal Alternating Linearized Minimization (iPALM) method is an accelerated version of PALM. By severely limiting extrapolated parameters($\alpha$ and $\beta$) as well as the choice of step size, iPALM can satisfy the Property \ref{based} but is not a monotonically decreasing approach. 
    
    \item [4)] BPL  \cite{xu2017globally}:  Randomized/deterministic block prox-linear (BPL) method is a BCD-based first-order method for nonconvex and nonsmooth problems. By strictly restricting the selection of extrapolation parameters ($\beta$) and step size, and using the backtracking method to backtrack the extrapolation parameter values that meet the requirements, BPL can satisfy Property \ref{based} and make the objective function monotonically decreasing. 

    \item [5)] IBPG  \cite{le2020inertial}:  Inertial block proximal gradient (IBPG) method is a first-order accelerated algorithm. By adding a conditional such as strong convex, IBPG allowing repeating the update, By severely limiting extrapolated parameters($\alpha$ and $\gamma$) as well as the choice of step size, IBPG can satisfy the Property \ref{based} but is not a monotonically decreasing approach. 
\end{itemize}
we give the parameter update strategy of the extrapolated parameters of the algorithms in Table \ref{betaupdate} (applicable to the following two examples). 
\begin{table*}
\renewcommand\arraystretch{1.4}
\centering
\caption{Update strategy of the extrapolated parameters $\beta_{k}$ and the update condition of APGnc$^+$, iPALM, APBL, ABPL$^+$}\label{betaupdate}
\scalebox{1}{
\begin{tabular*}{\linewidth}{l|l|l} %  % 
\toprule     
Algorithm & Update strategy of extrapolated parameters  & Remark \\  %换行
\midrule   

iPALM \cite{pock2016inertial} & \thead[l]{$\beta_{t}=\frac{(i-1)}{(i+1)}, ~ \alpha_{t}=\frac{(i-1)}{2*(i+1)}$} & It is an accelerated version of PALM \cite{bolte2014proximal}. \\ \hline

% APGnc$^+$ \cite{li2017convergence} & \thead[l]{$\beta_{k+1}=\left\{
%     \begin{aligned}
%     && & \min(\beta_{max},t\beta_{k}),~if~ F(x_k)\leq F(v_k)\\
%     && & \frac{\beta_{k}}{t}, ~else \\
%     \end{aligned}
%     \right   . $} & $\beta_{max}<1,~t>1$;.It is an adaptive version of APGnc \cite{yao2016more} \\ \hline
    
ABPL-cyclic & \thead[l]{$t_{k+1}=\frac{1+\sqrt(1+4*t_{k}^{2})}{2}, ~ \beta_{k+1}=\frac{t_{k}-1}{t_{k+1}}$}  & $t_{1}=1$; This strategy comes from  \cite{beck2009fast} on solving convex.\\  \hline    

ABPL-random & \thead[l]{$t_{k+1}=\frac{1+\sqrt(1+4*t_{k}^{2})}{2}, ~ \beta_{k+1}=\frac{t_{k}-1}{t_{k+1}}$}  & $t_{1}=1$; This strategy comes from  \cite{beck2009fast} on solving convex.\\  \hline

ABPL$^+$-cyclic & \thead[l]{$\beta_{k+1}=\left\{
    \begin{aligned}
    && & min(\beta_{max}, t\beta_{k}), ~if~Eq.~\ref{if} \ or \ Eq.~\ref{ifo} \ is \ true\\
    && & \frac{\beta_{k}}{t}, ~else \\
    \end{aligned}
    \right. $}& $\beta_{max}<1, ~t>1$; Updating block-variables with fixed cyclic order. \\  \hline
    
ABPL$^+$-random & \thead[l]{$\beta_{k+1}=\left\{
    \begin{aligned}
    && & min(\beta_{max}, t\beta_{k}), ~if~Eq. ~\ref{if} \ or \ Eq.~\ref{ifo} \ is \ true\\
    && & \frac{\beta_{k}}{t}, ~else \\
    \end{aligned}
    \right. $}& $\beta_{max}<1, ~t>1$; Updating block-variables with random shuffing. \\  
\bottomrule      
\end{tabular*}
}
\end{table*}

For BPL  \cite{xu2017globally}, IBPG  \cite{le2020inertial} and PALM  \cite{bolte2014proximal} have been proven to analyze the convergence of multivariate optimization problems, so we will not go into details. For APGnc$^+$ \cite{li2017convergence}, in the proof section \ref{section: A} of convergence of ABPL$^+$, it can be easily seen that the APGnc of multi block is only a degenerated form of ABPL$^+$, and for iPALM  \cite{pock2016inertial}, we only assume that its convergence to multi block variables still holds.

\subsection{Multiple sparse non-negative matrix factorization (msNMF) with $\ell_0$-norm constrain }
\label{subsection: NMF}
The multilayer NMF model \cite{guo2019sparse} extends the typical NMF explicitly to multiple layers, it is a more complex non-negative matrix factorization problem. 

To illustrate the effectiveness of our algorithm, we will apply ABPL$^+$ to the multiple non-negative matrix factorization with $\ell_0$-norm constrain, which can be presented: 
\begin{equation}
\begin{aligned}
&&  &\min \limits_{\left\{X_{i}\right\}_{i=1}^{n}} \frac{1}{2}||A-\prod_i^N X_{i}||_{F}^{2},\\
&& s. t \:  X_{i} &\geq 0,~||X_{i}||_{0} \leq s_{i} \ ({\forall} i \in \left[ N \right], ~s_{i} \in \mathbb{N}). 
\end{aligned}
\label{e41}
\end{equation}
when $N=2$, Eq. (\ref{e41}) degenerates into a sparse non-negative matrix factorization problem with $\ell_0$-norm constraint ($\ell_0$-msNMF).

\subsubsection{Solving $\ell_0$-msNMF using ABPL$^+$}
% At first, we analyze the relevant properties of the msNMF problem. 
If we write Eq. (\ref{e41}) in the form of Eq. (\ref{e11}), then we have:
% Comparing with Eq. (\ref{e11}), then:  
\begin{center}
$H(\left\{{X_{i}} \right\}^{N}_{i=1})= \frac{1}{2}||A-\prod_i^N X_{i}||_{F}^{2}$, $F_{i}(X_{i})=\delta_{i}(X_{i})$,
\end{center}
where
\begin{equation}
\delta_{i}(X_{i})=\left\{
\begin{aligned}
0&, \quad X_{i}\geq 0,~||X_{i}||_{0} \leq s_{i}, \\
\infty&,\quad else.
\end{aligned}
\right. 
\label{delta}
\end{equation}
% For the partial derivative of the function $H$: 
% \begin{small}
% \begin{align}
% && &\nabla_{X_{i}} H(\left\{{X_{j}} \right\}^{N}_{j=1})=(\prod_{j=1}^{i-1} X_{j})^{T}(\prod_{j=1}^N X_{j}-A)(\prod_{j=i+1}^N X_{j})^{T}. \label{enabla1}
% \end{align}    
% \end{small}
Now we will find the local lipschitz constant for each variable: 
\begin{lemma}
% Eq. (\ref{enabla1})  means 
$\nabla_{X_{i}} H(\left\{{X_{i}} \right\}^{N}_{j=1})$ are lipschitz gradient continuous with: 
\begin{small}
\begin{align}
&&& L(X_{i})=||((\prod_{j=1}^{i-1} X_{j})^{T}  \prod_{j=1}^{i-1} X_{j})||_{F} \times ||(\prod_{j=i+1}^N X_{j}  (\prod_{j=i+1}^N X_{j})^{T})||_{F}. \notag
\end{align}
\end{small}
\label{test1}
\end{lemma}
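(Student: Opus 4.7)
The plan is to compute $\nabla_{X_i}H$ explicitly, then bound the gradient difference using the submultiplicativity of the Frobenius norm. First I would introduce the shorthand $L_i := \prod_{j=1}^{i-1} X_j$ and $R_i := \prod_{j=i+1}^{N} X_j$, so that $\prod_{j=1}^N X_j = L_i X_i R_i$ and $H$ viewed as a function of $X_i$ alone (with the other blocks held fixed) becomes $\frac{1}{2}\|A - L_i X_i R_i\|_F^2$. Differentiating with respect to $X_i$ via the chain rule for the Frobenius inner product yields
\begin{equation*}
\nabla_{X_i} H \;=\; L_i^{\,T}\bigl(L_i X_i R_i - A\bigr) R_i^{\,T}.
\end{equation*}

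Next I would form the gradient difference at two points $X_i$ and $Y_i$ (with all other blocks fixed, which is what local Lipschitzness for block $i$ means). Since the map $X_i \mapsto \nabla_{X_i} H$ is affine in $X_i$, the $A$-term cancels, leaving
\begin{equation*}
\nabla_{X_i} H(X_i) - \nabla_{X_i} H(Y_i) \;=\; (L_i^{\,T} L_i)\,(X_i - Y_i)\,(R_i R_i^{\,T}).
\end{equation*}
Taking Frobenius norms and applying the inequality $\|PQS\|_F \le \|P\|_F\,\|Q\|_F\,\|S\|_F$ (which follows from $\|PQ\|_F \le \|P\|_{\mathrm{op}}\|Q\|_F \le \|P\|_F\|Q\|_F$) gives
\begin{equation*}
\|\nabla_{X_i} H(X_i) - \nabla_{X_i} H(Y_i)\|_F \;\le\; \|L_i^{\,T} L_i\|_F\,\|R_i R_i^{\,T}\|_F\,\|X_i - Y_i\|_F,
\end{equation*}
which is exactly the claimed Lipschitz constant $L(X_i)$ after substituting back the definitions of $L_i$ and $R_i$.

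The only mild subtlety, and the one potential obstacle, is being explicit about the edge cases $i = 1$ and $i = N$: one of the products $L_i$ or $R_i$ is empty and should be interpreted as the identity matrix of the appropriate size, so that the formula for $\nabla_{X_i}H$ degenerates correctly (for $i=1$, $\nabla_{X_1}H = (X_1 R_1 - A)R_1^T$, etc.). After handling this convention, the bound above is routine and the statement follows immediately from Definition \ref{dlipchitz} with $p = 1$.
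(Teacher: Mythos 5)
Your proposal is correct and follows essentially the same route as the paper: both compute the gradient difference as $(L_i^T L_i)(X_i - Y_i)(R_i R_i^T)$ and then apply submultiplicativity of the Frobenius norm. Your explicit derivation of $\nabla_{X_i}H$ and your remark on the empty-product conventions for $i=1$ and $i=N$ are harmless additions that the paper leaves implicit.
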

\begin{proof}
Because 
\begin{small}
\begin{align}
&& &\nabla_{X_{i}} H(\prod_{j=1}^{i-1}X_{j}, B_{i}, \prod_{j=i+1}^N X_{j})-\nabla_{X_{i}} H(\prod_{j=1}^{i-1}X_{j}, C_{i}, \prod_{j=i+1}^N X_{j})  \notag \\
&& &=((\prod_{j=1}^{i-1} X_{j})^{T}\prod_{j=1}^{i-1} X_{j})(B_{i}-C_{i})((\prod_{j=i+1}^N X_{j})(\prod_{j=i+1}^N X_{j})^{T}).\notag
\end{align}
\end{small}
and the Frobenius norm is a self-consistent matrix norm, which conforms to the norm definition of a linear operator \cite{ciarlet2013linear,stein2009real}, thus we have:
\begin{small}
\begin{align}
&& &||\nabla_{X_{i}} H(\prod_{j=1}^{i-1}X_{j}, B_{i}, \prod_{j=i+1}^N X_{j})-\nabla_{X_{i}} H(\prod_{j=1}^{i-1}X_{j}, C_{i}, \prod_{j=i+1}^N X_{j})||_{F}  \notag \\
&& &\leq ||(\prod_{j=1}^{i-1} X_{j})^{T}\prod_{j=1}^{i-1} X_{j}||_{F} 
\times ||\prod_{j=i+1}^N X_{j}(\prod_{j=i+1}^N X_{j})^{T})||_{F}\notag \\
&& &\times ||(B_{i}-C_{i})||_{F}. \notag
\end{align}
\end{small}
from Definition \ref{dlipchitz}, the Lemma \ref{test1} is true.
% the original proposition is true. 
\end{proof}
\myspace{0}Therefore, it is easy to verify that msNMF with $\ell_0$-norm constrain satisfies Assumption \ref{assump1}, and the proximal operator corresponding to $\ell_0$-msNMF is: 
\begin{align} 
 && X_{i}^{k+1} &\in prox_{\frac{1}{c^{k}_{i}} R_{i}(X^{k}_{i})}(U^{k}_{i}) \notag \\
 && &\in \operatorname*{\arg\min} \limits_{A} \left\{\frac{c_{i}^{k}}{2}||A-U^{k}_{i}||_{F}^{2}+F_{i}(X^{k}_{i}) \right\} \notag \\  
&& &\in  \operatorname*{\arg\min} \limits_{A}
 \left\{\frac{c_{i}^{k}}{2}||A-U^{k}_{i}||_{F}^{2}: A\geq 0, ~ ||A||_{0}\leq s_{i}\right\}, \notag  
\end{align}
where: 
\begin{center}
$U^{k}_{i}=X^{k}_{i}-\frac{1}{c^{k}_{i}} \nabla_{X_{i}} H(\left\{{X^{k+1}_{j}} \right\}^{i-1}_{j=1}, X^{k}_{i}, \left\{{X^{k}_{j}} \right\}^{N}_{j=i+1})$,\\$~c_{i}^{k}=\gamma_{i}*L(X_{i}^{k}), ~\gamma_{i}>1$. 
\end{center}

\subsubsection{Numerical tests}
In this experiment, we test the algorithms on the the datasets of SuiteSparse\footnote{\url{https://sparse.tamu.edu/}} \cite{10.1145/2049662.2049663}: dataset12mfeatfactors10NN ($\mathbb{R}^{2000\times2000}$) and lpship12l ($\mathbb{R}^{1153\times5533}$). 
% which’s dimension are $2000\times2000$ and $1153\times5533$. 
For each dataset,  the dimensions of the decomposition can be listed as following: 
\myspace{0}Dataset12mfeatfactors10NN:
$size(\left\{{X_{i}} \right\}^{3}_{i=1})=[2000\times 500, ~500\times 1300, ~1300\times 2000]$.
\myspace{0}Lpship12l: $size(\left\{{X_{i}} \right\}^{4}_{i=1})=[1151\times 1200, ~ 1200\times 500, ~ 500\times 3000, ~ 3000\times 5533]$.  

In our algorithms, set each initial hyperparameter according to the initial conditions of the algorithm as: $t=1.1, ~ \beta_{1}=0.6, ~ \beta_{max}=0.9999, ~ \gamma_{i}=1. 5$.
We run the algorithms for a specific sparsity setting: the number of non-zero elements in each matrix cannot exceed $30\%$ of the total number of elements, The initial point is set to a uniformly distributed random sparse positive definite matrix. 

In order to show the effectiveness of the proposed algorithms, we will evaluate them from two aspects:
(1) we run all algorithms ten times with the same hyperparameters, each time using the different random initialization point, and let all algorithms run for the same length of time. For dataset12mfeatfactors10NN, the running time was set by 400 seconds, for lpship12l, the running time was set by 500 seconds. We plotted the curves of the average objective function values of dataset12mfeatfactors10NN and lpship12l as a function of time under different algorithms in Figure \ref{figmatrix}(A) and Figure \ref{figmatrix}(B) respectively. 
(2) For the second aspect, define $RelErr^{k}=\frac{||A-\prod_{i=1}^{N}X_{i}^{k}||_{F}}{||A||_{F}}$, using the following stopping criteria: 
\begin{align}
\Delta_{RelErr^{k}} =|RelErr^{k+1}-RelErr^{k}|<\epsilon.
\label{stop}
\end{align}

For dataset12mfeatfactors10NN, the maximum running time was set by 800 seconds. For lpship12l, the maximum running time was set by 1000 seconds. $\epsilon$ is uniformly set to 1e-4. 
We run all algorithms ten times with the same hyperparameters and use the different random initialization point each time. The test results of these algorithms are presented in Table \ref{mattime}.
% Table \ref{mattime} shows the average time and the values of various indicators when the running result of the algorithms reach the termination condition. 
% \begin{figure*}
% \centering
% \subfloat[Average objective function value on dataset12mfeatfactors10NN]{
% \begin{minipage}{0.5\linewidth}
%     \includegraphics[width=9cm, height=6cm]{image/Figure1.1.pdf}
% \label{figmfea}
% \end{minipage}
% }
% \subfloat[Average objective function value on lpship2l]{
% \begin{minipage}{0.5\linewidth}
%     \includegraphics[width=9cm, height=6cm]{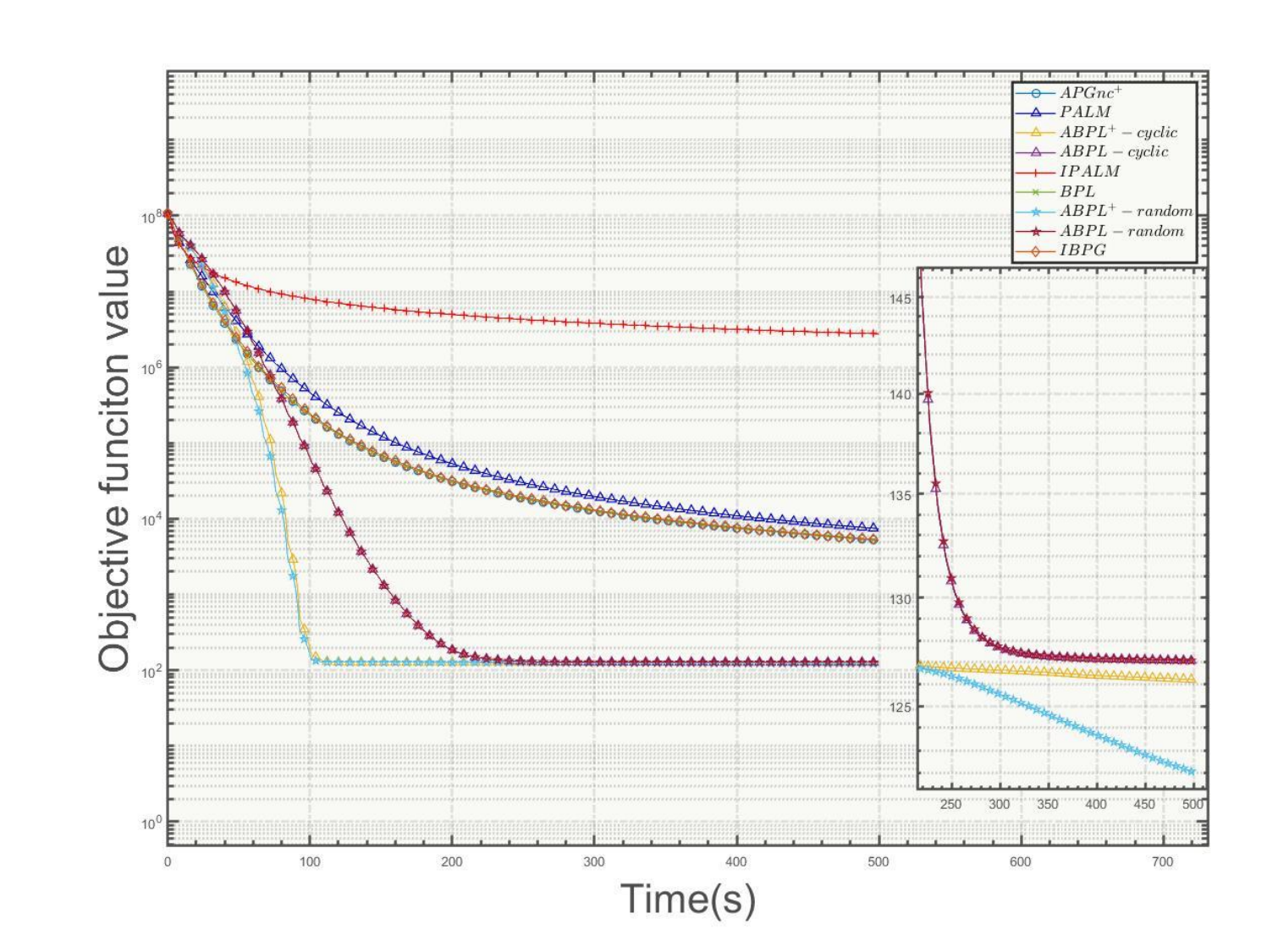}
% \label{figlp}
% \end{minipage}
% }
% \caption{Convergence performance of different algorithms on the dataset12mfeatfactors10NN and lpship12l datasets.The figure shows the average objective function value.A partial enlarged view of several curves with the fastest descending speed and the best effect is also be given,and the coordinate unit values and curve mark symbols of the partially enlarged figure are the same as those of the original figure}
% \end{figure*}

\begin{figure}[hbpt]
    \centering \includegraphics[width=1\linewidth]{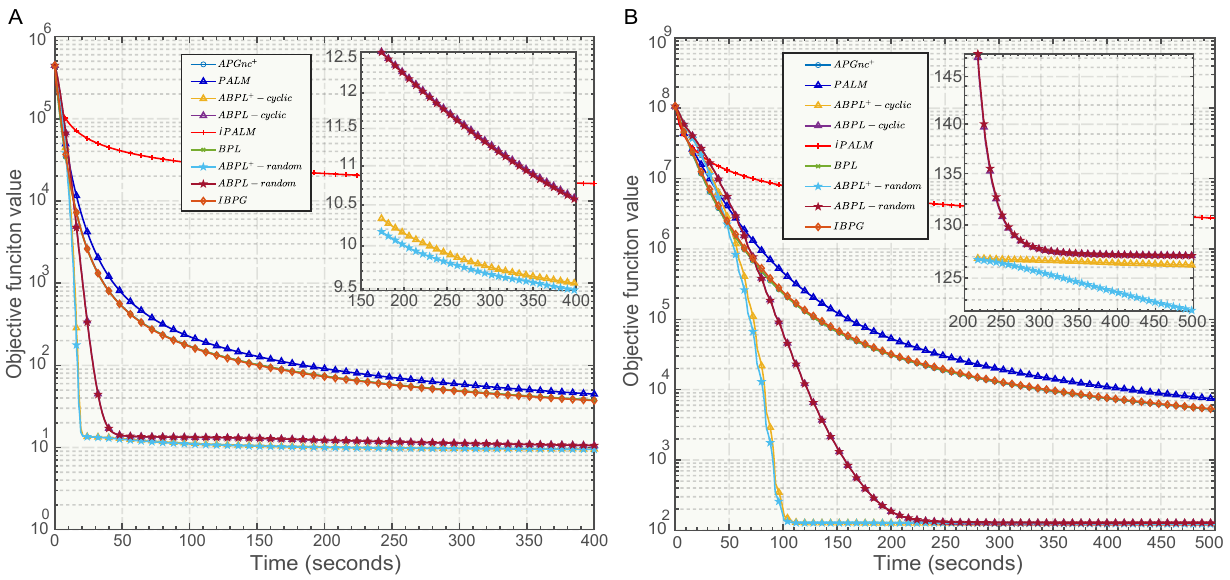}
    \caption{
    Comparison of the average convergence speed of the objective function of different algorithms on (A) the dataset12mfeatfactors10NN and (B) the lpship12l dataset. 
    The Convergence curves shows the average objective function value. A partial enlarged view of several curves with the fastest descending speed and the best effect is also be given, and the coordinate unit values and curve mark symbols of the partially enlarged figure are the same as those of the original figure. 
    }\label{figmatrix}
\end{figure}

\begin{table*}
\renewcommand\arraystretch{1.4}
\centering
\caption{Comparison of algorithms applied on the dataset12mfeatfactors10NN and lpship12l: the results are the average of ten runs and the bold indicates the best numerical performance under the set termination conditions.}
\scalebox{1}{
%\begin{tabular*}{\linewidth}{p{2cm}|p{2cm}|p{1.3cm}|p{1.5cm}|p{4.2cm}|p{1.4cm}|p{2.8cm}} %  % 
\begin{tabular*}{\linewidth}{p{2cm}|p{1.9cm}|p{1.7cm}|p{1.45cm}|p{4.2cm}|p{1.3cm}|p{2.6cm}} 
\toprule     
Data & Algorithm  &Objective function value & Time (seconds) &Final extrapolated parameters  & RelErr & $\Delta_{RelErr}$ (See Eq. \ref{stop}) \\
\midrule   
\multirow{9}{*}{dataset12mfea} & \multirow{1}{*}{PALM}  & 22.61 & 800 & $\sim$ & 3.276 & $8.22*10^{-4}$  \\ \cline{2-7}

\multirow{9}{*}{tfactors10NN \cite{10.1145/2049662.2049663}} & \multirow{1}{*}{APGnc$^+$} & 22.60 & 800 & $1.01*10^{-48}$ & 1.665 & $8.27*10^{-4}$ \\  \cline{2-7}

\multirow{9}{*}{} & iPALM &  12496 
& 800 & $\beta$=0.99, $\alpha$=0.49 &  918.184&  0.2862\\ \cline{2-7} 

\multirow{9}{*}{} & BPL  & 22.73 & 800 & [0.10, 0.10, 0.10, 0.10]   & 1.671 & $8.39*10^{-4}$ \\ \cline{2-7}

\multirow{9}{*}{} & IBPG  & 22.4415 & 800 & \thead[l]{$\gamma_{t}=[0.10, ~ 0.10, ~ 0.10]$ \\  $\alpha_{t}=[0.067, ~ 0.067, ~ 0.067]$}   & 1.650   & 0.0017 \\ \cline{2-7}

\multirow{9}{*}{} & \multirow{1}{*}{ABPL-cyclic}  & 9.05 & 800 & 0.99 & 0.777 & $1.95*10^{-4}$  \\ \cline{2-7}

\multirow{9}{*}{} & \multirow{1}{*}{ABPL-random}  & 9.04 & 800 & 0.99 & 0.776 & $1.92*10^{-4}$ \\ \cline{2-7}

\multirow{9}{*}{} & \multirow{1}{*}{ABPL$^+$-cyclic} &  9.52 & $\bm{353.21}$ & 0.99 & 0.702 & $9.75*10^{-5}$  \\ \cline{2-7}

\multirow{9}{*}{} & \multirow{1}{*}{ABPL$^+$-random} & 9.49 & $\bm{325.19}$ & 0.99  & 0.697 & $9.54*10^{-5}$  \\ \hline 

\multirow{9}{*}{lpship12l} & \multirow{1}{*}{PALM}  & 2010.8 & 1000 & $\sim$ & 57.582 & 0.0672  \\ \cline{2-7}

\multirow{9}{*}{factors10NN \cite{10.1145/2049662.2049663}} & \multirow{1}{*}{APGnc$^+$} & 2013.2 & 1000 & $1.076*10^{-12}$ & 15.81  & 0.0674  \\  \cline{2-7}

\multirow{9}{*}{} & iPALM &  $1.82^*10^{6}$ 
& 1000 & $\beta$=0.99, $\alpha$=0.49 &  $1.431*10^{4}$&  29.52\\ \cline{2-7} 

\multirow{9}{*}{} & BPL & 2034.0 & 1000 & [0.10, 0.10, 0.10, 0.10]   & 15.975 & 0.0678 \\  \cline{2-7} 

\multirow{9}{*}{} & IBPG & 2043.0 & 1000 & \thead[l]{$\gamma_{t}=[0.10, ~ 0.10, ~ 0.10, ~ 0.10]$ \\ $\alpha_{t}=[0.067, ~ 0.067, ~ 0.067, ~ 0.067]$}   & 16.046& 0.1464 \\ \cline{2-7}

\multirow{9}{*}{} & \multirow{1}{*}{ABPL-cyclic} & 127.20 & 367.68 & 0.95 & 0.998  & $9.55*10^{-5}$  \\ \cline{2-7}

\multirow{9}{*}{} & \multirow{1}{*}{ABPL-random}   & 127.20 & 358.23 & 0.95 & 0.998 & $9.27*10^{-5}$ \\ \cline{2-7}

\multirow{9}{*}{} & \multirow{1}{*}{ABPL$^+$-cyclic}  & 127.25 & $\bm{138.62}$ & 0.99 & 0.991 & $9.59*10^{-5}$  \\ \cline{2-7}

\multirow{9}{*}{} & \multirow{1}{*}{ABPL$^+$-random} & $126.39$ & $\bm{166.79}$ & 0.98  & 0.993 & $7.47*10^{-5}$  \\ \hline

\end{tabular*}
}
\label{mattime}
\end{table*}

Based on Table \ref{mattime} and Figure \ref{figmatrix}, we observed that
(i) comparing with other algorithms, ABPL$^+$ outperforms other algorithms in both evaluations, which shows that ABPL$^+$ has better results in terms of iteration speed and fast convergence; 
(ii) The impact of adaptive momentum is superior to non-adaptive; 
(iii) The final extrapolated parameter of APGnc$^+$ is a very small value compared to other APALM algorithms, which further confirm 
% the Theorems \ref{algo_flaw}, \ref{cutoff},  Corollary \ref{PS0}, \ref{PS1} and 
our point; 
(iv) Moreover, we observed that the average performance of ABPL$^+$ might be further improved if the blocks of variables were randomly shuffled. 

\subsection{Sparse non-negative tensor decomposition (SNTD) with $\ell_0$-norm constrain}
\label{subsection: NTF}
Sparse non-negative tensor decomposition is a widely used method in machine learning, one of which is NCP (Non-negative Canonical Polyadic Decomposition) \cite{cong2015tensor,bro2003new,wang2021sparse,wang2021inexact}.

% Non-negative tensor decomposition is a widely used method in machine learning, one of which is NCP (Non-negative Canonical Polyadic Decomposition) \cite{cong2015tensor,bro2003new}. 
% Some works have been devoted to the tensor decomposition with sparse regularization, the works of \cite{kim2013sparse} and \cite{allen2012sparse} studied the sparse regularization for tensor decomposition using $\ell_1$-norm and trace norm, but they only focused on the unconstrained CP model without the non-negative constraint, and both \cite{xu2017globally} and  \cite{wang2021sparse} studied tensor decomposition problems with non-negativity constraints, but did not consider problems with non-convex constraints such as $\ell_0$-norm constraints, desipite the fact that adding $\ell_{0}$ norm is the most intuitive and effective way to satisfy sparsity.

Herein, we focus on a sparse non-negative tensor decomposition with $\ell_0$-norm constrain ($\ell_0$-SNTD). Given an tensor:  $\mathcal{X} \in \prod_{i=1}^{n} R^{d_{i}}$, it corresponds to the following mathematical model: 
\begin{align}
&& &\min\limits_{\left\{A_{i}\right\}_{i=1}^{n}}\frac{1}{2}||\mathcal{X}-{[[ \ \ {\left\{A_{i}\right\}_{i=1}^{n}}  
 \ ]] }||_{F}^{2},  \notag \\
&& &s. t. A_{i} \geq 0, ~ ||A_{i}||_{0} \leq s_{i} ({\forall} i \in \left[ N \right], ~ s_{i} \in \mathbb{N}).
\label{e43}
\end{align}
where $A_{i}\in\mathbb{R}^{d_{i}\times R}$, $[[ \ ]]$ represents Kruskal operator, $\odot$ represents the Khatri-Rao product.
%And as the subsection \ref{subsection: NMF} says, the constrained optimization problem with $\ell_0$-norm is NP-hard, and the extrapolated sequence generated by APGnc$^+$ algorithms is difficult to meet the sparsity constraint requirements, and most APALM algorithms have very strict restrictions on the extrapolated parameters and step size. And many, if not most, APALM algorithms only consider the non-negative limit rather than the $\ell_{0}$ norm \cite{le2020inertial,xu2017globally,wang2021sparse}. Thus, we will apply our and baseline algorithm on it. 
\begin{table*}
\renewcommand\arraystretch{1.4}
\centering
\caption{Comparison of algorithms applied on the PosterEmssion and on OngoingEEG: the results are the average of ten runs and the bold indicates the best numerical performance under the set termination conditions}
\scalebox{1}{
\begin{tabular*}{\linewidth}{p{2cm}|p{1.9cm}|p{1.7cm}|p{1.5cm}|p{4.2cm}|p{1.25cm}|p{2.6cm}}  
\toprule     
Data & Algorithm  &Objective function value & Time (seconds) &Final extrapolated parameters  & RelErr &$\Delta_{RelErr}$  (See Eq.~\ref{stop}) \\
\midrule   
\multirow{9}{*}{Ongoing} &  PALM  & $3.083*10^{5}$ & 250 & $\sim$ & 0.360 & $1.94*10^{-6}$  \\ \cline{2-7}

\multirow{9}{*}{-EEG \cite{wang2018increasing,wang2021inexact,wang2021sparse}} &  APGnc$^+$ & $3.063*10^{5}$ & 250 & $1.51*10^{-128}$ & 0.357  & $8.79*10^{-7}$  \\  \cline{2-7}

\multirow{9}{*}{} & iPALM &   $4.16*10^{5}$ &250 & $\beta$=0.99, $\alpha$=0.49 &  0.360 &  $1.71*10^{-6}$\\ \cline{2-7} 

\multirow{9}{*}{} & BPL  & $3.069*10^{5}$ & 250 & [0.10, 0.10, 0.10]   & 0.358 & $1.27*10^{-7}$ \\ \cline{2-7}
\multirow{9}{*}{} & IBPG  & $3.050*10^{5}$ & 250 & \thead[l]{$\gamma_{t}=[0.10, ~ 0.10, ~ 0.10]$ \\ $\alpha_{t}=[0.067, ~ 0.067, ~ 0.067]$}   & 0.356 & $5.49*10^{-7}$ \\ \cline{2-7}

\multirow{9}{*}{} & ABPL-cyclic & $3.042*10^{5}$ & 71.72 & 0.99 & 0.353  & $6.64*10^{-9}$  \\ \cline{2-7}

\multirow{9}{*}{} & ABPL-random & $3.042*10^{5}$ & 83.24 & 0.99 & 0.355& $7.31*10^{-9}$ \\ \cline{2-7}

\multirow{9}{*}{} & ABPL$^+$-cyclic & $3.021*10^{5}$ & $\bm{69.82}$ & 0.88 & 0.352 & $7.72*10^{-9}$  \\ \cline{2-7}

\multirow{9}{*}{} & ABPL$^+$-random & $3.031*10^{5}$ & $\bm{80.62}$ & 0.86  & 0.353 & $7.80*10^{-9}$ \\ \hline

\multirow{9}{*}{Posterior} & PALM  & $1.717*10^{4}$ & 600 & $\sim$ &  0.628 &  $1.23*10^{-5}$ \\ \cline{2-7}

\multirow{9}{*}{-Emission \cite{tichy_ondrej_2023_7646462}} & APGnc$^+$ & $1.694*10^{4}$ & 569.58 & $1.33*10^{-71}$ &0.620 & $4.00*10^{-6}$   \\  \cline{2-7}

\multirow{9}{*}{} & iPALM &  $1.963*10^{4}$ 
& 355.68 & $\beta$=0.99, $\alpha$=0.49 & 0.718 &  $1.560*10^{-6}$\\ \cline{2-7} 

\multirow{9}{*}{} & BPL & $1.682*10^{4}$ & 590.06 & [0.10, 0.10, 0.10, 0.10]   & 0.615 & $2.79*10^{-6}$\\ \cline{2-7}
\multirow{9}{*}{} & IBPG & $1.684*10^{4}$ & 455.62 & \thead[l]{$\gamma_{t}=[0.10, ~ 0.10, ~ 0.10, ~ 0.10]$ \\ $\alpha_{t}=[0.067, ~ 0.067, ~ 0.067, ~ 0.067]$}  & 0.616 & $2.247*10^{-6}$ \\ \cline{2-7}

\multirow{9}{*}{} & ABPL-cyclic & $1.674*10^{4}$ & 376.72 & 0.99 & 0.611  & $1.71*10^{-6}$  \\ \cline{2-7}

\multirow{9}{*}{} & ABPL-random   & $1.670*10^{4}$ & 342.91  & 0.99 & 0.611 & $7.87*10^{-7}$ \\ \cline{2-7}

\multirow{9}{*}{} & ABPL$^+$-cyclic & $1.675*10^{4}$ & $\bm{265.27}$ & 0.91 & 0.613  & $8.00*10^{-7}$  \\ \cline{2-7}

\multirow{9}{*}{} & ABPL$^+$-random & $1.670*10^{4}$ & $\bm{290.51}$ & 0.93  & 0.611 & $8.58*10^{-7}$  \\ \hline
\end{tabular*}
}
\label{tensortime}
\end{table*}

%%%%%%%%%%%%%%%%%%%%%%%%%%%%%%%%%%%%%%%%%%%%%%%%%%%%%%%%%%%%%%%%%%%%%%%%%%%%%%
\begin{figure}[hbpt]
    \centering \includegraphics[width=1\linewidth]{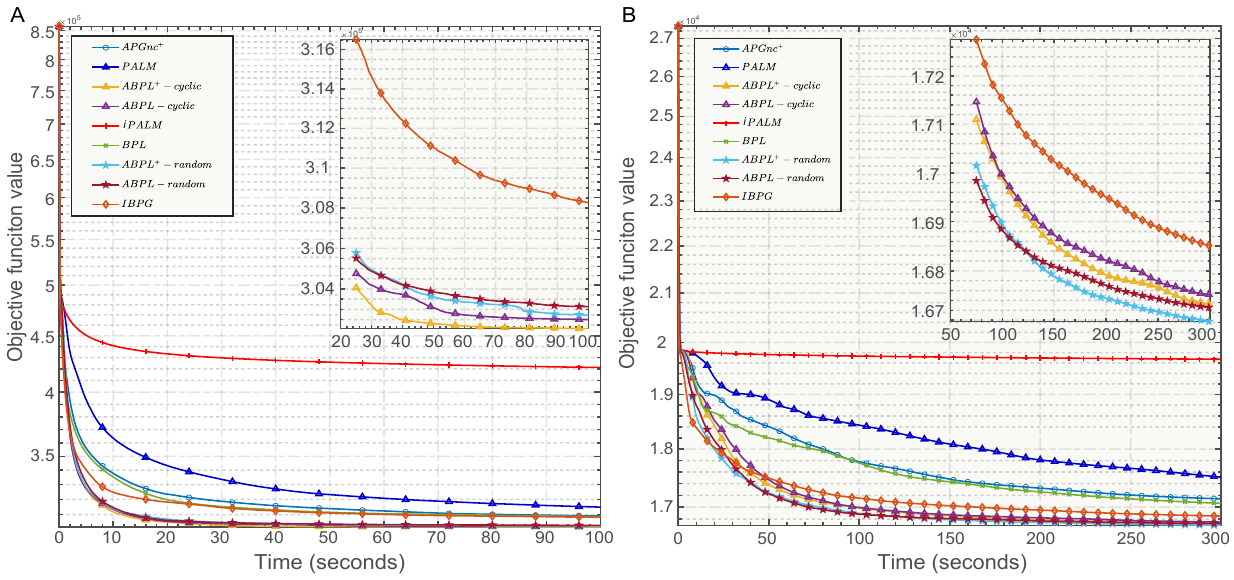}
    \caption{
    Comparison of the average convergence speed of the objective function of different algorithms on (A) the Ongoing-EEG dataset and (B) the PosteriorEmission dataset. 
    See Figure \ref{figmatrix} for other instructions.
    %The Convergence curves shows the average objective function value. A partial enlarged view of several curves with the fastest descending speed and the best effect is also be given, and the coordinate unit values and curve mark symbols of the partially enlarged figure are the same as those of the original figure. 
    }\label{figtensor}
\end{figure}
%%%%%%%%%%%%%%%%%%%%%%%%%%%%%%%%%%%%%%%%%%%%%%%%%%%%%%%%%%%%%%%%%%%%%%%%%%%%%%
\subsubsection{Solving $\ell_0$-SNTD using ABPL$^+$}
\myspace{0}Similarly, we analyze the relevant properties of the sparse non-negative tensor decomposition problem. First, compare with Eq. (\ref{e11}), then:  
\begin{align}
    && H(\left\{{A_{i}} \right\}^{n}_{i=1})&=\frac{1}{2}||\mathcal{X}-{[[ \ \ {\left\{A_{i}\right\}_{i=1}^{n}} \ ]] }||_{F}^{2},~ F_{i}(A_{i})&=\delta(A_{i}). \notag 
\end{align}
the definition of $\delta(A_{i})$ is the same as Eq. (\ref{delta}). 
\myspace{0}Let $X^{(j)} \in  R^{d_{j} \times \prod_{i=1, i \neq j}^{n} d_{i}}$ represent the mode-n unfolding of original tensor $\mathcal{X}$, the mode-n unfolding of the estimated tensor in Kruskal operator [[$ \ \prod_{i=1}^{n} A_{i}  \ $]] can be written as $A_{i}B_{i}^{T}$, where: 
\begin{center}
     $B_{i}= (A_{n}\odot. . . A_{i+1}\odot A_{i-1}. . . \odot A_{1})$.
    \end{center}
in BCD frame \cite{wang2021sparse}, Eq. (\ref{e43}) can be decomposed into n sub-problems: 
\begin{center}
     $\min\limits_{A_{i}}\mathcal{F}(A_{i})=\frac{1}{2}||X^{(i)}-A_{i}(B_{i})^{T}||_{F}^{2}$,
     \\
$ s. t. A_{i} \geq 0, ~ ||A_{i}||_{0} \leq s_{i} ({\forall} i \in \mathbb{N})$.
\end{center}
the partial gradient (or partial derivative) of $\mathcal{F}(A_{i}) $ with respect to $A_{i}$ is
\begin{align}
&& &\nabla_{A_{i}} H(\left\{{A_{i}} \right\}^{n}_{i=1})=\frac{\partial}{\partial A_{i}}\mathcal{F}(A_{i})=A_{i}(B_{i})^{T}(B_{i})-X^{(i)}B_{i}. \notag
 \end{align}
By Lemma \ref{test1}, $H$ are lipschitz gradient constant is $L(A_{i})=(B_{i})^{T}*B_{i}$, and $c_{i}=\gamma_{i}*L(A_{i}), ~ \gamma_{i}>1$.

Therefore, it is easy to verify that sparse non-negative tensor decomposition with $\ell_0$-norm constrain satisfies Assumption \ref{assump1}, and the proximal operator corresponding to $\ell_0$-SNTD is: 
\begin{align}
 && A_{i}^{k+1} &\in prox_{\frac{1}{c^{k}_{i}} R_{i}(A^{k}_{i})}
 (U^{k}_{i}) \notag \\
 && &=\operatorname*{\arg\min} \limits_{A}\left\{\frac{c_{i}^{k}}{2}\left|\right|A-U^{k}_{i}\left|\right|_{F}^{2}+F_{i}(A^{k}_{i}) \right\}  \notag \\ 
 && &= \operatorname*{\arg\min} \limits_{A}\left\{\frac{c_{i}^{k}}{2}\left|\right|A-U^{k}_{i}\left|\right|_{F}^{2}: A\geq 0, \left|\left|A\right|\right|_{0}\le s\right\}.  \notag
\end{align}
where: 
\begin{center}
$U^{k}_{i}=A^{k}_{i}-\frac{1}{c^{k}_{i}} \nabla_{X_{i}} H(\left\{{A^{k+1}_{j}} \right\}^{i-1}_{j=1}, A^{k}_{i}, \left\{{A^{k}_{i}} \right\}^{n}_{j=i+1})$. 
\end{center}

\subsubsection{Numerical tests}
We test the proposed and baline algorithms on the the datasets of OngoingEEG\footnote{\url{https://github.com/wangdeqing/Ongoing_EEG_Tensor_Decomposition}} \cite{wang2018increasing,wang2021inexact} and PosteriorEmission\footnote{\url{https://zenodo.org/record/7646462}} \cite{tichy_ondrej_2023_7646462}, the dataset of OngoingEEG is 3-dimension tensor($54\times146\times510$), the dataset of PosteriorEmission is 4-dimension tensor($240\times40\times12\times8$), we set non-zero parameter is $30\%$ of the nonzero elements for each matrix. For OngoingEEG, the initial number of components $R$ was set by 40. And for PosteriorEmission, the initial number of components $R$ was set by 50. The initial point is set to a uniformly distributed random sparse positive definite matrix. Set each initial hyperparameter according to the initial conditions of the algorithms as: $t=1.3, ~ \beta_{1}=0.2, ~ \beta_{max}=0.9999, ~ \gamma_{i}=1.5$. 

In order to show the effectiveness of the proposed algorithm, we will also evaluate them from two aspects: (1) we run all algorithms ten times with the same hyperparameters, each time using the different random initialization point, and let all algorithms run for the same length of time. For OngoingEGG, the running time was set by 100 seconds, for PosteriorEmission, the running time was set by 300 seconds. We plotted the curves of the average objective function values of OngoingEEG and PosteriorEmission as a function of time under different algorithms in Figure \ref{figtensor}(A) and Figure \ref{figtensor}(B) respectively. (2) For the second aspect, define $RelErr^{k}=\frac{||\mathcal{X}-{[[ \ \prod_{i=1}^{n} A^{k}_{i}  
 \ ]] }||_{F}}{||\mathcal{X}||_{F}}$, using the same stopping criteria with Eq. (\ref{stop}) 
% \begin{align}
% RelErr^{k}=\frac{||\mathcal{X}-{[[ \ \prod_{i=1}^{n} A^{k}_{i}  
%  \ ]] }||_{F}}{||\mathcal{X}||_{F}}. \notag \\  
% \Delta_{RelErr^{k}}=|RelErr^{k+1}-RelErr^{k}|<\epsilon.
% \label{stop1}
% \end{align}
, and for OngoingEEG, we set $\epsilon=1e-8$ and maximum running time was set by 250 seconds. For PosteriorEmission, we set $\epsilon=1e-6$ and maximum running time was set by 600 seconds.
% \begin{center}
% $RelErr=\frac{| \ ||\mathcal{X}-{[[ \ \prod_{i=1}^{n} A^{k+1}_{i}  
%  \ ]] }||_{F}-||\mathcal{X}-{[[ \ \prod_{i=1}^{n} A^{k}_{i}  
%  \ ]] }||_{F} |}{||\mathcal{X}||_{F}}<\epsilon$
% \end{center}

Then in Table \ref{tensortime}, we run all algorithms ten times with the same hyperparameters and use the different random initialization point each time, Table \ref{tensortime} shows the average time and the values of various indicators when the running result of the algorithms reach the termination condition. 

Similarly, just as the Table \ref{tensortime} and Figure \ref{figtensor} demonstrates, we observe that 
(\romannumeral1) comparing with other algorithms, ABPL$^+$ outperforms other algorithms in both evaluations, which shows that ABPL$^+$ has better results in terms of iteration speed and fast convergence;  
(\romannumeral2) furthermore, the impact of adaptive momentum is superior to non-adaptive;  
(\romannumeral3) And the final extrapolated parameter of APGnc$^+$ is a very small value compared to other APALM algorithms, which further confirm 
 % the Theorems \ref{algo_flaw}, \ref{cutoff}, Corollary \ref{PS0}, \ref{PS1} and 
our point; 
(\romannumeral4) Moreover, we observed that the average performance of ABPL$^+$ might be further improved if the blocks of variables were randomly shuffled. 

\section{Conclusion}
In this paper, we propose an accelerated block proximal linear framework with adaptive mometum (ABPL$^+$) for solving non-convex non-smooth problems. Under our analytical method, we discuss the reasons for why the extrapolation phase could fail in specific algorithmic frameworks and show the effectiveness of our algorithm, which can not only solve the problem of extrapolation failure and extend to multivariate, but also resolves the issue that commonly used accelerated proximal alternation algorithms need to impose stringent limitations on the selection of extrapolation parameters and step size. 
Furthermore, given some modest assumptions, our method demonstrated that our algorithm is a  monotonically decreasing method in terms of objective function values and demonstrated the effectiveness of our algorithm's random update order, we also succinctly and intuitively proving that the entire sequence converges to the critical point. Further assuming the Kurdyka–Łojasiewicz property of the objective function, we estimate our algorithm's asymptotic convergence rate, which establish the global convergence of the sequences produced by our algorithm. Numerical experimental results of $\ell_0$-msNMF and $\ell_0$-NSTD are very encouraging.

\begin{appendices}

% \section{APGnc algorithm framework}
% \label{APGncAppdix}
% The APGnc algorithm is given in Algorithm \ref{APGnc}: 

% \begin{algorithm}[!htbp]
%     \caption{ \small APGnc: APG non-convex problem \cite{yao2016more,li2017convergence}}\label{APGnc}
%   \begin{small}
%     \SetAlgoLined
%      \LinesNumbered
%         \KwIn{$z^{1} =x^{1}  \in \mathrm{dom} \ J$, $k_{max}=c, ~t>1,         ~\beta_{max} \in [0,1),~\beta_{1} \in [0,\beta_{max}]$.}
%         \KwOut{$x^{k+1}$.}
%         \For{$k = 1$ to $k_{max}$}
%         {
% 	  $\sigma^{k} \in (0, \frac{1}{L_{\nabla H_{x^{k}}}})$, $x^{k+1} \in prox_{\sigma^{k} F} (z^{k}-\sigma^{k} \nabla_x h(z^{k})) $,  
%         $y^{k+1}$=$x^{k+1}$+$\beta_{k} (x^{k+1}-x^{k})$,
        
%         % \If{ \begin{equation}\label{ifAPGnc}
%         \eIf{\begin{equation}\label{ifAPGnc}J(y^{k+1}) \leq J(x^{k})\end{equation}}
%         % \end{equation}} 
%         {
%          $z^{k+1}=y^{k+1}$.
%         }{
%          $z^{k+1}=x^{k+1}$.     
%         }      
%         }
%   \end{small}
% \end{algorithm}

\section{Potential reasons for the failure of the extrapolation step}
\label{Appdix2}
Some fundamental theorems need to be proven before delving deeper into algorithmic defects. These theorems are general and do not specific to the algorithms discussed in this article. We will use them to explain why extrapolation methods in certain iterative optimization algorithms fail. 
% More specifically, for certain scenarios, the extrapolated sequence does not necessarily belong to the set $\mathrm{dom} ~J=S$ in the algorithm framework we are considering. We will demonstrate this scenario using measure theory, and it is highly likely to occur in specific problem instances. 
\begin{theorem}
$\mu$ is a Lebesgue measure. From Definition 
\ref{lebesgue}, \ref{truncated}, define $\Gamma_{\left[ n \right]}$ as the set system composed of all subsets of the set $\left[ n \right]$, and $K_{j}={\left\{k_{j(i)}\right\}}_{j(i)=1}^{m,m \leq n}, ~ \cup_{j=1}^{2^{n}}K_{j}=\Gamma_{\left[ n \right]}$, $\left\{x_{i}\right\}_{i\in \left[ n \right]} \in A $, then $\mu(\cup_{j=1}^{2^{n}} A|_{K_{j}})=0$: 
\label{cutoff}
\end{theorem}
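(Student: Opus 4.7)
The plan is to bound the measure of a finite union by the sum of the measures of its parts and then argue that each such part is already null, exploiting the cylindrical structure of the truncated sets together with Lemma~\ref{cmeasure}.

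First I would use the finite subadditivity of the Lebesgue measure, which is an immediate consequence of countable additivity (Definition~\ref{dmeasure}): since the power set $\Gamma_{[n]}$ has cardinality exactly $2^{n}$, the union is finite and hence
\begin{equation*}
\mu\!\left(\bigcup_{j=1}^{2^{n}} A|_{K_{j}}\right) \le \sum_{j=1}^{2^{n}} \mu\bigl(A|_{K_{j}}\bigr),
\end{equation*}
so it is enough to prove $\mu(A|_{K_{j}})=0$ for every index $j$. This reduces a topological-looking statement to a purely measure-theoretic one and localizes the problem to a single $K_{j}$.

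Next I would unfold Definition~\ref{truncated} and rewrite each $A|_{K_{j}}$ as a cylinder set. Writing the ambient space as $X=\prod_{i=1}^{n}\mathbb{R}^{d_{i}}$, the definition gives the product decomposition
\begin{equation*}
A|_{K_{j}} \;=\; \pi_{K_{j}}(A)\;\times\;\prod_{i\in [n]\setminus K_{j}}\mathbb{R}^{d_{i}},
\end{equation*}
where $\pi_{K_{j}}$ denotes the projection onto the coordinates indexed by $K_{j}$. Applying Lemma~\ref{cmeasure} iteratively to this product gives
\begin{equation*}
\mu\bigl(A|_{K_{j}}\bigr) \;=\; \mu\bigl(\pi_{K_{j}}(A)\bigr)\cdot\prod_{i\notin K_{j}}\mu(\mathbb{R}^{d_{i}}),
\end{equation*}
with the usual measure-theoretic convention $0\cdot\infty=0$.

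The main obstacle — and the step that carries the real content of the theorem — is therefore to verify that the trace $\pi_{K_{j}}(A)$ is a Lebesgue-null set for every choice of $K_{j}$. This is where the structural description of $A$ inherited from the extrapolation-failure context of Appendix~\ref{Appdix2} must be used: $A$ arises as the solution locus of a non-trivial coordinate-wise equality (the boundary case of a strict inequality such as~(\ref{if})), and hence sits inside a proper analytic subvariety of $\prod_{i}\mathbb{R}^{d_{i}}$. I would then argue, via the covering characterization of Lebesgue measure in Definition~\ref{lebesgue}, that every coordinate projection of such a subvariety can be covered by rectangles of arbitrarily small total volume, so $\mu(\pi_{K_{j}}(A))=0$. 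Substituting back, each summand of the finite bound vanishes, and the conclusion $\mu\!\left(\bigcup_{j=1}^{2^{n}}A|_{K_{j}}\right)=0$ follows.
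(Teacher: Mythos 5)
There is a genuine gap, and it sits exactly where you say the ``real content'' of the theorem lies. Your first two steps (finite subadditivity, then factoring each truncated set as a product and invoking Lemma~\ref{cmeasure}) match the paper. But you then reduce everything to proving $\mu(\pi_{K_j}(A))=0$, and the justification you offer --- that $A$ lies in a proper analytic subvariety and that ``every coordinate projection of such a subvariety can be covered by rectangles of arbitrarily small total volume'' --- is false as a general principle. Coordinate projections do not preserve nullity: the hyperplane $\{x_1=0\}\subseteq\mathbb{R}^2$ is a proper subvariety of measure zero, yet its projection onto the $x_2$-axis is all of $\mathbb{R}$; likewise the graph of any function $f:\mathbb{R}^{n-1}\to\mathbb{R}$ is null in $\mathbb{R}^n$ but projects onto the whole of $\mathbb{R}^{n-1}$. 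Moreover, nothing in the hypotheses of Theorem~\ref{cutoff} says that $A$ is a subvariety or a solution locus --- you are importing structure from the surrounding discussion in Appendix~\ref{Appdix2} that the statement does not assume. Under your cylinder reading $A|_{K_j}=\pi_{K_j}(A)\times\prod_{i\notin K_j}\mathbb{R}^{d_i}$, the claim would in fact be false for $A=X$.

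The step you are missing is the one the paper actually leans on: the hypothesis $\{x_i\}_{i\in[n]}\in A$ supplies a \emph{specific point}, and the truncated set, as it is used in the proof and in Corollary~\ref{PS1}, pins the coordinates indexed by $K_j$ to the single value $x_{K_j}$ (compare the condition $(x_{\{k_i\}})=0$ in Corollary~\ref{PS1}). The relevant factorization is therefore
\begin{equation*}
\mu\bigl(A|_{K_j}\bigr)=\mu\Bigl(\bigl\{x_{K_j}\bigr\}\Bigr)\cdot\mu\Bigl(\textstyle\prod_{i\notin K_j}\mathbb{R}^{d_i}\Bigr)=0,
\end{equation*}
because a singleton has Lebesgue measure zero in $\prod_{i\in K_j}\mathbb{R}^{d_i}$ and Lemma~\ref{cmeasure} (with the $0\cdot\infty=0$ convention you correctly flag) kills the product. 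No statement about projections of $A$ is needed, and none would suffice. With that substitution your subadditivity step closes the argument exactly as in the paper.
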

\begin{proof}
From the Lemma \ref{cmeasure} and single point set measure is 0, thus we have: 
\begin{align}
   && &\mu(A|_{K_{j}})=\mu(\left\{(x_{\left[ n \right] \setminus \left\{k_{i}\right\}_{i=1}^{m,m \leq n} })\right\}) \mu(x_{\left\{k_{i}\right\}_{i=1}^{m,m \leq n}})=0. \notag
\end{align}
according to the Definition \ref{dmeasure}: 
\begin{center}
  $\mu(\cup_{j=1}^{2^{n}} A|_{K_{j}})\leq \sum_{j=1}^{2^{n}} \mu(A|_{K_{j}})=0$.  
\end{center}
which means that Theorem \ref{cutoff} holds. 
\end{proof}
\myspace{0}Therefore we have the following corollary: 
\begin{corollary}
Define X as the complete set, let $S=\mathrm{dom}~J$, if S is a set composed of truncated sets (Definition \ref{truncated}) of the complete set X, then P(S)=0 (P is a probability measure)    
\label{PS0}
\end{corollary}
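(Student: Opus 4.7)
The plan is to reduce the corollary directly to Theorem~\ref{cutoff}, which has already done the measure-theoretic heavy lifting. The hypothesis that $S = \mathrm{dom}~J$ is ``composed of truncated sets'' of the complete set $X$ should be read as saying that $S$ can be written as a (finite or countable) union of truncated sets of the form $X|_{K_j}$, where each $K_j$ is a proper subset of indices in $[n]$ as in Definition~\ref{truncated}. Once this is in hand, $S$ sits inside the union $\bigcup_{j=1}^{2^n} X|_{K_j}$, so by monotonicity of measure and by Theorem~\ref{cutoff} applied with $A=X$, we get $\mu(S)=0$ in Lebesgue measure.

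The second (and only other) step is to pass from Lebesgue measure zero to probability zero. Here I would invoke Definition~\ref{pro}, which defines $P$ as a measure satisfying $P(X)=1$; in the setting of the paper, the probability on the ambient Euclidean product space $X\subseteq\prod_i\mathbb{R}^{d_i}$ is the (normalized) Lebesgue probability, or more generally a probability absolutely continuous with respect to $\mu$. In either case $\mu(S)=0$ forces $P(S)=0$ by absolute continuity, and the corollary follows. I would state this dependence on absolute continuity explicitly, since Definition~\ref{pro} alone does not guarantee it for a generic probability.

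The main obstacle is essentially notational rather than mathematical: making precise what ``composed of truncated sets'' means so that the corollary is a clean consequence of Theorem~\ref{cutoff}, and specifying the class of probability measures for which the conclusion is meaningful. Once the index collection $\{K_j\}$ is exhibited explicitly, the inclusion $S\subseteq \bigcup_j X|_{K_j}$ and the subadditivity bound
\begin{equation*}
\mu(S)\;\le\;\sum_{j=1}^{2^n}\mu\bigl(X|_{K_j}\bigr)\;=\;0
\end{equation*}
deliver the result. Thus the corollary is indeed ``obvious from Theorem~\ref{cutoff}'' as the authors claim, modulo clarifying absolute continuity of $P$ with respect to $\mu$.
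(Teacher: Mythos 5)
Your proposal is correct and follows essentially the same route as the paper, which simply cites Theorem~\ref{cutoff} and Definition~\ref{pro} and declares the result obvious. Your version is in fact more careful than the paper's: you make explicit both the inclusion $S\subseteq\bigcup_j X|_{K_j}$ with the subadditivity bound and the requirement that $P$ be absolutely continuous with respect to Lebesgue measure (a hypothesis the paper silently assumes when passing from $\mu(S)=0$ to $P(S)=0$).
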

\begin{proof}
Define $P(X)=1$, from Theorem \ref{cutoff} and Definition \ref{pro}, it is obviously true.
\end{proof}
\myspace{0} A most intuitive example is the case of multi-block problems with $\ell_0$-norm constraints. 
\begin{corollary}
If we need $||y^{k}_{i}||_{0} \leq s_{i}~({\forall} i \in \left[ n \right])$, then $P(y^{k} \in S)=0$ ($S=\mathrm{dom}~ J$, P is a probability measure)
 \label{PS1}
 \end{corollary}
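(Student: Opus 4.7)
The plan is to reduce the claim directly to Corollary \ref{PS0} by exhibiting the feasible set of the $\ell_0$-constrained iterate as a finite union of truncated sets in the sense of Definition \ref{truncated}. First I would unpack the constraint: since $y^k_i \in \mathbb{R}^{d_i}$ and $\|y^k_i\|_0 \le s_i$, the admissible set for each block is
\[
V_i \;=\; \bigcup_{\substack{K \subseteq [d_i] \\ |K| \le s_i}} \bigl\{\, v \in \mathbb{R}^{d_i} : v_\ell = 0 \text{ for all } \ell \notin K \,\bigr\},
\]
which, in the generic case $s_i < d_i$, is a finite union of proper coordinate subspaces of $\mathbb{R}^{d_i}$. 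Each such subspace is obtained by fixing the complement of $K$ to zero and letting the $K$-coordinates vary freely — exactly the construction of a truncated set in Definition \ref{truncated}, where the ``free'' coordinates play the role of $\{k_i\}$ in the definition.

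Next I would assemble the global domain. The full feasible region decomposes as $S \subseteq V_1 \times V_2 \times \cdots \times V_n$, which is a finite union (over all admissible index tuples $(K_1,\dots,K_n)$) of product sets of coordinate subspaces. By Lemma \ref{cmeasure}, the Lebesgue measure of each product factorizes; since at least one factor $V_{K_i}$ is a proper coordinate subspace of $\mathbb{R}^{d_i}$ (when $s_i < d_i$) and therefore has Lebesgue measure zero, each product has measure zero. Summing over the finitely many index tuples (bounded in number by $\prod_i 2^{d_i}$) and invoking countable subadditivity from Definition \ref{dmeasure} gives $\mu(S)=0$.

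Finally, I would convert measure to probability via Definition \ref{pro}: normalizing Lebesgue measure on a bounded enclosing box $X$ of the iterates (which exists by Assumption \ref{assump1}(iii), since $J$ is coercive and the generated sequence is bounded) yields a probability measure $P$ with $P(S)=0$. This is the content of Corollary \ref{PS0} applied to $S$, so the result $P(y^k \in S)=0$ follows. The main obstacle I anticipate is purely bookkeeping — making precise that ``the $y^k$ obtained from an extrapolation step lives in the ambient box $X$ with a well-defined absolutely continuous law'' — but under the blanket assumption that extrapolation is an unconstrained affine combination of previous iterates, this is immediate and the entire argument reduces to the combinatorial/measure-theoretic observation that a finite union of coordinate subspaces of codimension at least one is Lebesgue-null.
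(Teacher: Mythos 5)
Your proposal is correct and follows essentially the same route as the paper: both identify the $\ell_0$-constrained domain as a (union of) truncated set(s) in the sense of Definition \ref{truncated}, invoke the Lebesgue-measure-zero result of Theorem \ref{cutoff} together with Corollary \ref{PS0}, and normalize to a probability measure. Your write-up is in fact more explicit than the paper's one-line argument --- spelling out the union over all admissible supports $K_i$ with $|K_i|\le s_i$ and flagging the necessary condition $s_i<d_i$ --- but the underlying decomposition and key lemmas are identical.
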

\begin{proof}
This is because: 
\begin{center}
$S=\mathbb{R}^{n}|_{{\left\{k_{i}\right\}}_{i=1}^{m, m \leq n}}=\left\{(x_{\left[ n \right]  }): (x_{{\left\{k_{i}\right\}}_{i=1}^{m, m \leq n}} )= 0 \in \mathbb{R}^{n} \right\}$.
\end{center}
which meets Theorem \ref{cutoff} and Corollary \ref{PS0}. 
\end{proof}
Therefore, the extrapolation step in APGnc and APGnc$^+$ (\cite{yao2016more,li2017convergence}) is almost in failure state.
 \end{appendices}
 
\section*{Acknowledgment}
The work of W. Y and W. M. was supported in part by the National Natural Science Foundation of China (62262069), in part by the Program of Yunnan Key Laboratory of Intelligent Systems and Computing (202205AG070003), in part by the Yunnan Fundamental Research Projects under Grant (202201AT070469, 202301BF070001-019). 
\balance
\small
\bibliographystyle{IEEEtran}
\bibliography{refer}
\end{document}